\renewcommand{\emptyset}{\varnothing}
\g@addto@macro\normalsize{%
  \setlength\abovedisplayskip{10pt}
  \setlength\belowdisplayskip{10pt}
  \setlength\abovedisplayshortskip{5pt}
  \setlength\belowdisplayshortskip{8pt}
}
\newtheorem{thm}{Theorem}[section]
\newtheorem{cor}[thm]{Corollary}
\newtheorem{lem}[thm]{Lemma}
\newtheorem{prop}[thm]{Proposition}
\theoremstyle{definition}
\newtheorem{ex}[thm]{Example}
\newtheorem{rmk}[thm]{Remark}
\newtheorem{dfn}[thm]{Definition}
\newtheorem{ass}[thm]{Assumption}
\newtheorem{question}[thm]{Question}
\newcommand{\gldim}{\operatorname{gl.\hspace{-1.5pt}dim}}
\newcommand{\resdim}{\operatorname{res.\hspace{-1.5pt}dim}}
\renewcommand{\epsilon}{\varepsilon}
\newcommand{\gr}{\operatorname{gr}\nolimits}
\newcommand{\Hom}{\operatorname{Hom}\nolimits}
\newcommand{\Ext}{\operatorname{Ext}\nolimits}
\newcommand{\mac}{\operatorname{mac}\nolimits}
\newcommand{\inc}{\operatorname{inc}\nolimits}
\newcommand{\cone}{\operatorname{cone}\nolimits}
\newcommand{\spann}{\operatorname{span}\nolimits}
\newcommand{\id}{\operatorname{id}\nolimits}
\newcommand{\Bigosum}[2]{\raisebox{0.5pt}{\ensuremath{\displaystyle\mathop{\textstyle\bigoplus}_{#1}^{#2}}}}
\newcommand{\Bigsum}[2]{\ensuremath{\mathop{\textstyle\sum}_{#1}^{#2}}}
\newcommand{\ran}{\operatorname{ran}\nolimits}
\newcommand{\cok}{\operatorname{cok}\nolimits}
\newcommand{\coim}{\operatorname{coim}\nolimits}
\newcommand{\im}{\operatorname{im}\nolimits}
\newcommand{\ind}{\operatorname{colim}\nolimits}
\newcommand{\w}{\operatorname{w}\nolimits}
\newcommand{\Proj}{\operatorname{Proj}\nolimits}
\newcommand{\Stovicek}{{\v{S}}{\v{t}}ov{\'{\i}}{\v{c}}ek}
\newcommand{\DD}{\ensuremath{\text{\bf\textsf{D}}}}
\newcommand{\K}{\ensuremath{\text{\bf\textsf{K}}}}
\newcommand{\C}{\ensuremath{\text{\bf\textsf{Ch}}}}
\newcommand{\Ac}{\ensuremath{\text{\bf\textsf{Ac}}}}
\newcommand{\bb}{\ensuremath{\text{\bf\textsf{b}}}}
\newcommand{\NN}{\mathbb{N}}
\newcommand{\CC}{\mathbb{C}}
\newcommand{\KK}{\mathbb{K}}
\newcommand{\EE}{\mathbb{E}}
\newcommand{\ZZ}{\mathbb{Z}}
\newcommand{\BB}{\mathbb{B}}
\newcommand{\diam}{\hfill\raisebox{-0.75pt}{\scalebox{1.4}{$\diamond$}}}
\renewcommand{\bullet}{\,\begin{picture}(-1,1)(-1,-3)\circle*{2.}\end{picture}\ }
\newcommand{\Class}{\mathsf{C}}
\newcommand{\HDLCS}{\mathsf{Tc}}
\newcommand{\Bor}{\mathsf{Born}}
\newcommand{\Ban}{\mathsf{Ban}}
\newcommand{\Fre}{\mathsf{Fre}}
\newcommand{\LB}{\mathsf{LB}}
\newcommand{\LBr}{\mathsf{LB}_{\mathsf{reg}}}
\newcommand{\LBc}{\mathsf{LB}_{\mathsf{com}}}
\newcommand{\Call}{\mathbb{C}_{\mathsf{all},\hspace{1pt}\LB}}
\newcommand{\Callreg}{\mathbb{C}_{\mathsf{all},\hspace{1pt}\LBr}}
\newcommand{\Callcom}{\mathbb{C}_{\mathsf{all},\hspace{1pt}\LBc}}
\newcommand{\Etop}{\mathbb{E}_{\mathsf{top},\hspace{1pt}\LB}}
\newcommand{\Emax}{\mathbb{E}_{\mathsf{max},\hspace{1pt}\LB}}
\newcommand{\Etopreg}{\mathbb{E}_{\mathsf{top},\hspace{1pt}\LBr}}
\newcommand{\Emaxreg}{\mathbb{E}_{\mathsf{max},\hspace{1pt}\LBr}}
\newcommand{\Etopcom}{\mathbb{E}_{\mathsf{top},\hspace{1pt}\LBc}}
\newcommand{\Emaxcom}{\mathbb{E}_{\mathsf{max},\hspace{1pt}\LBc}}
\newcommand{\Cregsur}{\mathbb{D}_{\mathsf{max},\hspace{1pt}\LBr}}
\newcommand{\Dcom}{\mathbb{D}_{\mathsf{max},\hspace{1pt}\LBc}}
\newcommand{\Dmax}{\mathbb{D}_{\mathsf{max},\hspace{1pt}\LB}}
\renewcommand{\sharp}{\texttt{\#}}
\newcommand{\hypref}[1]{\hyperref[R0]{\color{RoyalBlue}{\bf #1}}}
\DeclareMathOperator*{\fsum}{%
  \mathchoice
    {\raisebox{-.0\height}{\scalebox{1.15}{$\sum$}}}
    {\raisebox{-.0\height}{\scalebox{1}{$\sum$}}}
    {\raisebox{-.0\height}{\scalebox{0.75}{$\sum$}}}
    {\raisebox{-.05\height}{\scalebox{0.55}{$\sum$}}}}
\newcommand\zfrac[2]{\text{\footnotesize\raisebox{.15ex}{%
\dimen0=\fontdimen8\textfont2  
\dimen2=\fontdimen11\textfont2 
\dimen4=\fontdimen8\textfont3  
$%
\fontdimen8\textfont2=.5\dimen0
\fontdimen11\textfont2=.5\dimen2
\fontdimen8\textfont3=1.1\dimen4
\dfrac{#1}{#2}$%
\fontdimen8\textfont2=\dimen0
\fontdimen11\textfont2=\dimen2
\fontdimen8\textfont3=\dimen4
}}}
\renewcommand*\textcircled[1]{\tikz[baseline=(char.base)]{
            \node[shape=circle,draw,inner sep=1pt] (char) {#1};}}
	\tikzset{commutative diagrams/.cd, 
		mysymbol/.style = {start anchor=center, end anchor = center, draw = none}}
\newcommand{\commutes}[2][\circ]{\arrow[mysymbol]{#2}[description]{#1}}
\DeclareMathSymbol{\sm}{\mathbin}{AMSa}{"39}
\DeclareMathSymbol{\shortminus}{\mathbin}{AMSa}{"39}
\DeclareMathSymbol{\shortminus}{\mathbin}{AMSa}{"39}
\newlist{compactitem}{itemize}{3} 
\setlist[compactitem,1]{leftmargin=26pt, noitemsep, topsep=0pt}
\newlist{myitemize}{itemize}{1}
\setlist[myitemize,1]{leftmargin=26pt, noitemsep, topsep=0pt}
\renewcommand{\tau}{\uptau}
\renewcommand{\sigma}{\upsigma}
\begin{document}

\allowdisplaybreaks
$ $
\vspace{-20pt}

\title{A homological approach to (Grothendieck's)\\[2pt]completeness problem for regular LB-spaces}



\author{Sven-Ake Wegner\hspace{0.5pt}\MakeLowercase{$^{\text{1},\,\text{2}}$}}

\renewcommand{\thefootnote}{}
\hspace{-1000pt}\footnote{\hspace{5.5pt}2020 \emph{MSC}: Primary 46M18, 46A13, 18G80; Secondary 18E05, 46M10, 46A45.\vspace{1pt}}


\hspace{-1000pt}\footnote{\hspace{5.5pt}\emph{Key words}: LB-space, inductive limit, exact category, karoubian category, derived category. \vspace{1pt}}

\hspace{-1000pt}\footnote{\hspace{0pt}$^{1}$\,University of Hamburg, Department of Mathematics, Bundesstra\ss{}e 55, 20146 Hamburg, Germany, phone:\newline\phantom{x}\hspace{1.2pt}+49\,(0)\,40\:2395\hspace{1pt}-\hspace{1pt}25120, e-mail: sven.wegner@uni-hamburg.de.\vspace{1.6pt}}

\hspace{-1000pt}\footnote{\hspace{0pt}$^{2}$\,This research was funded by the German Research Foundation (DFG); Project no.~507660524.}

\begin{abstract}
We consider the long-standing question of whether every regular LB-space is complete. This problem has been open since the 1950s and originates in Grothendieck's early work in functional analysis. Rather than seeking a direct proof or counterexample, our approach is to study weak versions of the problem using homological methods. We consider the categories of complete and, respectively, regular LB-spaces, establish that their derived categories are well-defined with respect to several exact structures, and show that there are canonical triangle functors between them. If one of these functors were not an equivalence, this would provide a negative answer to Grothendieck's question. In contrast, we prove that one of them is an equivalence. This may be interpreted as evidence in favor of an affirmative answer to the original problem, and it shows in particular that the two classes of spaces share the same homological algebra, even if they were to differ.
\end{abstract}

\maketitle

\vspace{0pt}
\section{Introduction}\label{SEC-1}

In this article, by an LB-space $(X,\uptau)=\ind_{n\in\NN}X_n$ we mean a Hausdorff locally convex topological vector space that can be written as an inductive limit of a sequence $X_0\hookrightarrow X_1\hookrightarrow\cdots$ of Banach spaces with linear and continuous inclusions as structure maps. As a vector space, $X$ is the union of the $X_n$ and $\uptau$ is the finest locally convex topology that makes all inclusions $X_n\hookrightarrow(X,\tau)$ continuous. The study of LB-spaces began around 75 years ago, when K\"othe \cite{K48} defined the coechelon space $k^{\hspace{1pt}1}(V)=\ind_{n\in\NN}\ell^{\hspace{1pt}1}(v_n)$ in 1948. Shortly thereafter, in 1949\hspace{1pt}-\hspace{1pt}50, Dieudonn\'e, Schwartz \cite{DS49} and K\"othe \cite{K50} coined the notion of `LF-spaces' for countable inductive limits of Fr\'echet spaces; this was picked up by Grothendieck \cite{G, G55} who contributed sig\-ni\-fi\-cant\-ly to the general theory of LF- and LB-spaces during the early 1950s. While Grothendieck did not use the term `LB-space' explicitly, the latter notation became standard in the 1960s, at the latest with the appearance of K\"othe's monograph \cite{KI}.

\smallskip

The reason spaces of the above types became popular is on the one hand that many classic spaces of analysis are indeed LF- or duals of LF-spaces. Prominent examples are  test functions $\operatorname{C}_{\operatorname{c}}^{\infty}(\Omega)$, distributions $\mathcal{D}'(\Omega)$, real analytic functions $\mathcal{A}(\Omega)$, compactly supported continuous functions $\operatorname{C}_{\operatorname{c}}(\Omega)$, or germs of holomorphic functions $\operatorname{H}(K)$. On the other hand, inductive limits are interesting as they in general behave rather badly: Firstly, the limit space $(X,\uptau)$ may be incomplete, even if all step spaces are Banach, see K\"othe \cite{K50}. Secondly, while a bounded subset $B\subseteq X_n$ is automatically bounded in $(X,\uptau)$, it might happen that not every bounded set in $(X,\uptau)$ arises in this way! The latter was pointed out by Grothendieck \cite[Chapitre I, p. 12\hspace{1.5pt}-13]{G55} in reference to the very example given in \cite{K50}. Despite this, Grothendieck's factorization theorem \cite[Chapitre I, p.~16]{G55} implies that every complete LF-space is `regular', meaning that for every $\uptau$-bounded $B\subseteq X$ there exists $n\in\NN$ such that $B$ is contained and bounded in $X_n$. In fact, the following hierarchy holds for LF- resp.~LB-spaces, see Bierstedt \cite[p.~77]{Klaus}:\vspace{0pt}
\begin{equation*}
\adjustbox{scale=0.925,center}{\begin{tikzcd}
\text{\small complete}\;\arrow[Rightarrow, shift left=0.65ex, xshift=0.25ex]{r}{}\arrow[Leftarrow, shift left=-0.65ex]{r}{\begin{picture}(0,0)\put(-10,-23.5){$\stackrel{\uparrow}{\stackrel{\text{known}}{\scriptscriptstyle\text{for LB}}}$}\end{picture}} &\;\text{\begin{minipage}{50pt}\centering\small quasi-\vspace{-3pt}\\complete\end{minipage}}\arrow[Rightarrow]{r}{} &\;\text{\begin{minipage}{50pt}\centering\small sequentially\vspace{-3pt}\\complete\end{minipage}}\;\arrow[Rightarrow]{r}{} &\text{\begin{minipage}{50pt}\centering\small Mackey\vspace{-3pt}\\complete\end{minipage}}\arrow[Rightarrow, shift left=0.65ex, xshift=0.25ex]{r}{}\arrow[Leftarrow, shift left=-0.65ex]{r}{} &\,\;\text{\small regular.}
\end{tikzcd}}\vspace{10pt}
\end{equation*}
The non-obvious implications are due to Grothendieck who asked in \cite[Chapitre II, p.~137, question non r\'esolue no.\ 9]{G55} explicitly if the first reverse implication holds in general for LF-spaces. Bierstedt \cite[Problem 1 on p.~78]{Klaus}, followed by many others, e.g., \cite[Problem 13.8.6]{BPC}, \cite[p.\ 150]{BDM94}, \cite[p.\ 247]{JochenAcyc}, also attributed to him the question of whether all properties above are equivalent:
\begin{question}(attributed to Grothendieck)\label{Q-G} Is every regular LF-space complete? 
\end{question}
There are large subclasses of LF-spaces that are automatically complete: This is the case for `strict LF-spaces' where by assumption all structure maps $X_n\hookrightarrow X_{n+1}$ are relatively open, or `LS-spaces' where the structure maps are assumed to be compact. For such classes Question \ref{Q-G} is moot. Beyond this there are only two general results known: The first is due to Wengenroth \cite[Thm 3.3]{JochenAcyc} who showed in 1996 that the implication `regular\,$\Longrightarrow$\,complete' holds for LF-spaces whose steps are Montel. The second is due to Bonet, Dierolf, Ku\ss{} \cite{BD89, DK} who showed the same for the class of so-called Moscatelli-type LB-spaces\,---\,which was in particular surprising as this class had served very well in the construction of counterexamples to other questions in the past. To conclude this historical overview of Question \ref{Q-G}, we have to warn the reader that between 1989 and 2002 several papers claimed to have answered it, first in the negative and then positively. Although all these articles have significant gaps, see Bonet \cite{Kucera}, they still appear at the top of search engine results and are quoted by chatbots.

\smallskip

Having said all of the above, our aim in this paper is not to attempt to answer Question \ref{Q-G} as stated, but rather to introduce and discuss a homological version of its LB-case by using derived categories\hspace{0.5pt}---\hspace{0.5pt}a concept that, curiously enough, also traces back to Grothendieck and his student Verdier, see \cite{Verdier}. 

\smallskip

As a matter of fact, homological methods have been used in functional analysis with great success already since the late 1960s. Beginning with Palamodov's seminal papers \cite{Pala68, Pala71}, we refer to Wengenroth's book \cite{JochensBuch} for a thorough exposition and additional references. From the 2000s onwards, derived categories were also considered, primarily in bornological analysis and geometry, e.g.\ \cite{PSDuke, Meyer, BBBK18,  BKK24, KellysBook}, but also in the context of topological groups or vector spaces, e.g.\ \cite{Prosmans, BuehlerCR, Pirk, HKRW, LW, Lupini, Positselski,  Braunling}. While homological algebra classically deals with abelian categories, none of the categories mentioned above enjoys this property. Consequently, the homological toolbox, and in particular the definition of the derived category, needs to be generalized in order to become applicable in the context of functional analysis. In order to do so, there are two different approaches: On the one hand, one can look at \emph{intrinsic} properties that are weaker than abelianness but still strong enough to ensure that suitable versions of the (diagram) lemmas on which homological methods rely can be proven. This leads to the notions of pre-, semi- and quasiabelian categories and corresponding homological theories, see for example \cite{BP65, Pala68, Raikov, Pala71, KC72, Yakovlev, Generalov, KK03} and the references therein. In particular for quasiabelian categories the derived category has been studied extensively in the monograph \cite{S} by Schneiders and this class covers among others the categories $\HDLCS$ of locally convex spaces, $\Ban$ of Banach spaces, $\Fre$ of Fr\'echet spaces and $\Bor$ of bornological spaces. It however does not cover the three categories
$$
\LBc\subseteq\LBr\subset\LB
$$
of complete/regular/all LB-spaces that we are interested in. The second approach to a non-abelian homological algebra is to endow a category with an \emph{extrinsic} notion of short exact sequences, or, in Quillen's \cite{Quillen72} notation, with an exact structure. Generalizing Be\u{\i}linson, Bernstein, Deligne \cite{BBD}, Neeman \cite{Nee90} showed that the derived category with respect to an arbitrary extrinsic exact structure can be defined, and that it behaves particularly well under the mild intrinsic condition of karoubianness. In Sections \ref{SEC-2}\hspace{0.75pt}--\hspace{0.5pt}\ref{SEC-COM} we establish that $\LBr$ and $\LBc$ are karoubian and that they can be endowed with two natural exact structures each, namely with the `maximal exact structure' or with the structure consisting of the `topologically exact sequences', which is the restriction of the intrinsic notion of exactness in $\HDLCS$.

\smallskip

For some categories arising in functional analysis, e.g., $\Ban$ or $\Fre$, a sequence $X\stackrel{\scriptscriptstyle f}{\rightarrow} Y\stackrel{\scriptscriptstyle g}{\rightarrow} Z$ with linear and continuous $f$ and $g$ belongs to the maximal exact structure iff it is topologically exact iff it is algebraically exact, i.e., exact if we forget the topologies and read it as a sequence of vector spaces only. This however does not hold for $\LB$, meaning in particular that the algebraically exact sequences do not constitute an exact structure on that category. As they are certainly the easiest class to work with, the question arises if one can nevertheless do homological algebra with them. It turned out very recently that this is indeed possible using `one-sided exact structures' as initially introduced by Rump \cite{Rump01}, Bazzoni, Crivei \cite{BC13} and further studied by Henrard, Kvamme, van Roosmalen and the author \cite{HR19, HR19a, HR20,HKRW, HR24} under the name of `deflation- and inflation-exact categories'. In Section \ref{SEC-2a} of this article we give a short summary of their theory and in Sections \ref{SEC-2}\hspace{0.75pt}--\hspace{0.5pt}\ref{SEC-COM} we show that $\LBr$ and $\LBc$ each admit a maximal deflation-exact structure. In \cite{HKRW} it was shown that the maximal deflation-exact structure of $\LB$ consists precisely of the algebraically exact sequences; here we show the same for $\LBr$, while for $\LBc$ the maximal deflation-exact structure is at least contained in the algebraically exact sequences.

\smallskip

Our characterizations of the (deflation-)exact structures on $\LBr$ and $\LBc$, plus the fact that both categories are karoubian, provide three different ways to define their derived categories. Using a consistent choice of exact structures on both sides yields exact inclusion functors $\LBc\hookrightarrow\LBr$, see the diagram in Section \ref{SEC-EX-FKT}. In Section \ref{SEC-AC} we describe the acyclic complexes in $\LB$, $\LBr$ and $\LBc$ in analytic terms. Combining the results of the previous sections, we obtain in Section \ref{SEC-7} that the aforementioned exact inclusions induce three natural triangle functors
\begin{eqnarray*}
\operatorname{F}_{\mathsf{top}}^{\text{b}}\colon&\hspace{-12pt}\DD^{\bb}(\LBc,\Etopcom)\longrightarrow\DD^{\bb}(\LBr,\Etopreg),\\[1pt]
\operatorname{F}_{\mathsf{max}}^{\text{b}}\colon&\hspace{-12pt}\DD^{\bb}(\LBc,\Emaxcom)\longrightarrow\DD^{\bb}(\LBr,\Emaxreg),\\[1pt]
\operatorname{F}_{\mathsf{def}}^{\text{b}}\colon&\hspace{-12pt}\DD^{\bb}(\LBc,\Dcom)\longrightarrow\DD^{\bb}(\LBr,\Cregsur).
\end{eqnarray*}
These then provide the following `homologifications' of Question \ref{Q-G}:
\begin{question}\label{Q-H} Are any of the functors $\operatorname{F}_{\mathsf{top}}^{\text{b}}$, $\operatorname{F}_{\mathsf{max}}^{\text{b}}$ or $\operatorname{F}_{\mathsf{def}}^{\text{b}}$ an equivalence?
\end{question}
While we show in this article that Question \ref{Q-H} is well-defined, we are unfortunately not able to answer it. We would however like to point out that if one of the functors were not an equivalence, this would yield a negative answer to the original Question \ref{Q-G}. Conversely, if some of the functors above were equivalences, this may be interpreted as evidence in favor of an affirmative answer to Grothendieck's original problem.

\smallskip

The obstruction for answering Question \ref{Q-H} in the affirmative is the following: Although it is well-known that any LB-space $X=\mathop{\ind}_{n\in\NN}X_n$ has the following `standard resolution' 
\begin{equation}\label{EQ-INTRO}\tag{0}
\Bigosum{n\in\mathbb{N}}{}X_n\stackrel{d}{\longrightarrow}\Bigosum{n\in\mathbb{N}}{}X_n\stackrel{\sigma}{\longrightarrow}X,
\end{equation}
in which the direct sums are complete LB-spaces, it is also well-known that in the above sequence the map $d$ is, even for regular $X$, in general not an isomorphism/weak isomorphism onto its range. The latter means that \eqref{EQ-INTRO} in general is not an $\Etopreg$/$\Emaxreg$-acyclic complex and thus not the type of resolution one needs in order to see that $\operatorname{F}_{\mathsf{top}}^{\text{b}}$/$\operatorname{F}_{\mathsf{max}}^{\text{b}}$ is essentially surjective. In contrast, our results yield that the standard resolution is indeed $\Cregsur$-acyclic. Moving forward, a calculus-of-fractions proof for $\operatorname{F}_{\mathsf{def}}^{\text{b}}$ being full or faithful founders on the fact that it is unclear whether a $\Cregsur$-acyclic complex, whose objects are all complete, is already $\Dcom$-acyclic. 

\smallskip

In order to address the above, in the second part of Section \ref{SEC-7} we generalize \Stovicek, Kerner, Trlifaj's \cite{SKT11} notion of `relative derived categories'. Their idea was essentially to define the derived category of a subcategory $\mathcal{B}\subseteq\mathcal{A}$ of an abelian category by making those morphisms of complexes invertible whose cone is a complex over $\mathcal{B}$ and $\mathcal{A}$-acyclic. Replacing `abelian' with `deflation-exact' and then using the same definition via a Verdier quotient, it turns out that by  setting $\mathcal{A}=(\LB,\Dmax)$ and choosing $\mathcal{B}\in\{\LBc,\LBr\}$ all technical issues explained in the paragraph above can be circumvented and the following homological version of Question \ref{Q-G} (see Theorem \ref{DE-COR}) can be proven:

\begin{thm}The inclusion functor $\LBc\hookrightarrow\LBr$ induces a triangle equivalence 
$$
\operatorname{F}_{\mathsf{rel}}^{\text{\emph{b}}}\colon\DD^{\bb}\bigl(\LBc;(\LB,\Dmax)\bigr)\stackrel{\hspace{-2pt}\sim}{\longrightarrow}\DD^{\bb}\bigl(\LBr;(\LB,\Dmax)\bigr)
$$
of relative derived categories.\qed
\end{thm}

We emphasize that the above means that even if the classes of regular and complete LB-spaces do not coincide, their relative homological algebra inside the class of all LB-spaces with respect to all algebraically exact sequences is nevertheless the same.

\section{Conflation categories}\label{SEC-2a}

Let $\mathcal{A}$ be an additive category. We call a morphism $f$ in $\mathcal{A}$ \emph{a kernel} if there is some morphism $g$ in $\mathcal{A}$ such that $f=\ker g$. Dually, $f$ is \emph{a cokernel} if there is some $g$ with $f=\cok g$. The category $\mathcal{A}$ is said to be \emph{preabelian} if every morphism has a kernel and a cokernel. A pair $(f,g)$ of morphisms is a \emph{kernel-cokernel pair} if $f=\ker g$ and $g=\cok f$ both hold.

\smallskip

The following notion goes back to Richman, Walker \cite{RW} and has been generalized to non-preabelian categories by Crivei \cite{Crivei}.

\begin{dfn}\label{AN-3}\cite[Dfn 2.4]{Crivei} Let $\mathcal{A}$ be an additive category.  We say that $f\colon X\rightarrow Y$ is a \emph{semistable kernel} if it is a kernel and for any morphism $t\colon X\rightarrow T$ in $\mathcal{A}$ the pushout\vspace{-10pt}
 \begin{equation*}
\begin{tikzcd}
X\arrow{r}{f}\arrow{d}[swap]{t}\commutes[\mathrm{PO}]{dr} & Y \arrow{d}{q_Y}\\[4pt]
T \arrow{r}[swap]{q_T} & Q.
\end{tikzcd}
\end{equation*}
of $f$ along $t$ exists and $q_T$ is again a kernel. \emph{Semistable cokernels} are defined dually. A kernel-cokernel pair $(f,g)$ is a \emph{stable kernel-cokernel pair} if $f$ is a semistable kernel and $g$ is a semistable cokernel.\diam{}
\end{dfn}

Throughout this article we consider exact categories in the sense of Quillen \cite{Quillen72} as well as their one-sided generalization due to Rump \cite{RumpMax}, Bazzoni, Crivei \cite{BC13} and Henrard et al.~\cite{HKR, HKRW, HR19, HR19a, HR20, HR24}. We adopt the notation of \cite{HKR, HKRW, HR20} but mention below how the latter relates to the other articles.

\begin{dfn}\label{DEFLEXCAT} \cite[Dfns 2.7 and 4.6]{HKRW} We call a pair $(\mathcal{A},\mathbb{C})$, consisting of an additive category $\mathcal{A}$ and a class $\mathbb{C}$ of kernel-cokernel pairs in $\mathcal{A}$, a \emph{conflation category}. If $(f,g)\in\CC$, then we refer to $f$ as an \emph{inflation} and to $g$ as a \emph{deflation}.\vspace{3pt}
\begin{compactitem}
\item[(i)] A conflation category $(\mathcal{A},\mathbb{C})$ is \emph{deflation-exact} if the following three axioms hold.

\vspace{3pt}

\begin{compactitem}
\item[\color{RoyalBlue}{\textbf{R0}}\label{R0}] For each $X\in\mathcal{A}$ the map $X\rightarrow0$ is a deflation.\vspace{3pt}
\item[\color{RoyalBlue}{\textbf{R1}}\label{R1}] The composition of two deflations is a deflation.\vspace{3pt}
\item[\color{RoyalBlue}{\textbf{R2}}\label{R2}] The pullback of a deflation along any morphism exists and is again a deflation.
\end{compactitem}

\vspace{3pt}

\item[(ii)] The conflation category $(\mathcal{A},\mathbb{C})$ is \emph{strongly deflation-exact} if additionally the following axiom holds.

\vspace{3pt}

\begin{compactitem}
\item[\color{RoyalBlue}{\textbf{R3}}\label{R3}] If $i$ and $p$ are composable morphisms in $\mathcal{A}$ such that $p$ has a kernel and $p\circ i$ is a deflation, then $p$ is a deflation.
\end{compactitem}

\vspace{3pt}

\item[(iii)] We say that a conflation category $(\mathcal{A},\mathbb{C})$ has \emph{admissible kernels} if in $\mathcal{A}$ every morphism possesses a kernel and if every kernel is an inflation.

\vspace{3pt}

\item[(iv)] The conflation category $(\mathcal{A},\CC)$ is \emph{exact}, if \hypref{R0}\hspace{2pt}--\hspace{1pt}\hypref{R3} as well as the dual axioms \textcolor{RoyalBlue}{\textbf{L0}}\label{L2}\hspace{1pt}--\hspace{1pt}\textcolor{RoyalBlue}{\textbf{L3}} all hold.\diam{}
\end{compactitem}
\end{dfn}

\begin{rmk}\label{AN-6}\begin{myitemize}\setlength{\itemindent}{-10pt}\item[(i)] In the notation of \cite{HR19,HR19a,HR24} the axioms \hypref{R1}\hspace{1pt}--\hspace{1pt}\hypref{R3} are defined as above, but axiom \hypref{R0} is in these papers called \textbf{R0}$^{\boldsymbol{\ast}}$. The definition with \hypref{R0} as above guarantees that all split kernel-cokernel pairs are conflations, cf.~\cite[Prop 2.6]{HR24} and \cite[Rmk 2.9]{HKRW}. 

\vspace{3pt}\setlength{\itemindent}{0pt}

\item[(ii)] In the notation of \cite[Dfns 3.1 and 3.2]{BC13} the axioms in Definition \ref{DEFLEXCAT} correspond to (R0$^*$)$^{\operatorname{op}}$, R1$^{\operatorname{op}}$, R2$^{\operatorname{op}}$ and R3$^{\operatorname{op}}$, respectively.

\vspace{3pt}\setlength{\itemindent}{0pt}

\item[(iii)] In \cite{HR19} the term `right exact' is used synonymously for `deflation-exact', while in \cite{BC13} deflation-exact categories are called `left exact'. Rump \cite{RumpMax} also uses the term `left exact' but includes \hypref{R3} in the definition.

\vspace{3pt}\setlength{\itemindent}{0pt}

\item[(iv)] By Keller \cite{Keller90} the axioms \hypref{R0}\hspace{2pt}--\hspace{1pt}\hypref{R2} and \hypref{L2} suffice to define an exact category in the sense of Quillen, cf.~\cite[Rmk 2.9]{HKRW}.\diam{}
\end{myitemize}
\end{rmk}

Recall that an additive category is \emph{idempotent complete} or \emph{karoubian} if every idempotent, i.e., every $p\colon X\rightarrow X$ with $p^2=p$, has a kernel. The next result was proven initially for preabelian categories in \cite{SW} and generalized to so-called weakly idempotent complete categories in \cite{Crivei}. For our purposes in the current article the following version will though be sufficient.

\begin{thm}\cite[Thm 3.5]{Crivei}\label{PROP-MAX} Let $\mathcal{A}$ be a karoubian category. Then the stable kernel-cokernel pairs define an exact structure on $\mathcal{A}$. Moreover, this is the maximal exact structure on $\mathcal{A}$.\diam{}
\end{thm}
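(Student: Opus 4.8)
The plan is to treat the two assertions separately, guided by a single self-duality. Denote by $\mathbb{C}$ the class of all stable kernel-cokernel pairs on $\mathcal{A}$; the task is to show that $(\mathcal{A},\mathbb{C})$ is exact and that every exact structure on $\mathcal{A}$ is contained in $\mathbb{C}$. The organizing observation is that $\mathbb{C}$ is self-dual: a pair $(f,g)$ lies in $\mathbb{C}$ exactly when $(g,f)$, read in $\mathcal{A}^{\operatorname{op}}$, is a stable kernel-cokernel pair, since semistable kernels and semistable cokernels interchange under dualization, and $\mathcal{A}^{\operatorname{op}}$ is again karoubian. Hence, by Remark \ref{AN-6}(iv), it suffices for exactness to verify \hypref{R0}, \hypref{R1} and \hypref{R2}, because the remaining axiom \hypref{L2} in Keller's list is precisely \hypref{R2} applied to $\mathcal{A}^{\operatorname{op}}$ and so comes for free.

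For \hypref{R0} I would first record that split kernel-cokernel pairs lie in $\mathbb{C}$. Indeed, a split monomorphism $f$ has a complementary idempotent which splits because $\mathcal{A}$ is karoubian, exhibiting $f$ as the kernel of the associated projection; its pushout along any morphism is again a split monomorphism, hence a kernel, so $f$ is a semistable kernel, and dually every split epimorphism is a semistable cokernel. Applying this to the split pair $(\id_X, X\to 0)$ shows that $X\to 0$ is a deflation. Note that idempotent completeness is already used at this step.

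The substance of the argument is \hypref{R1} and \hypref{R2}, for which I would establish the closure of $\mathbb{C}$ under pullback of the deflation and under composition of deflations. The deflation halves are the cheap part: if $g$ is the deflation of a pair in $\mathbb{C}$ and $g'$ is its pullback along some morphism, then $g'$ is a cokernel by the definition of semistable cokernel, and by the pasting law every further pullback of $g'$ is a pullback of $g$, hence again a cokernel, so $g'$ is a semistable cokernel; an analogous pullback manipulation shows that a composite of two deflations is a semistable cokernel. The delicate point---and the step I expect to be the main obstacle---is that in each case the resulting deflation must possess a kernel that is again a semistable kernel. For \hypref{R2} one has $\ker g'\cong\ker g$ sitting inside the pullback object, and one must prove that this induced inflation $f'$ is semistable, i.e.\ that all of its pushouts exist and are kernels; here the two semistability conditions of Definition \ref{AN-3} genuinely interact, and transporting semistability of $f$ across the pullback square (respectively across the composite, for \hypref{R1}) is exactly where idempotent completeness is indispensable, the candidate pushout maps being recognizable as kernels only after splitting the relevant idempotents.

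Finally, maximality is short and uses only the axioms of an exact category. Let $(\mathcal{A},\mathbb{E})$ be an arbitrary exact structure and $(f,g)\in\mathbb{E}$. Since $g$ is a deflation, axiom \hypref{R2} for $\mathbb{E}$ guarantees that its pullbacks exist and are deflations, in particular cokernels, so $g$ is a semistable cokernel; dually, the axiom \hypref{L2} for $\mathbb{E}$ makes the inflation $f$ a semistable kernel. Thus $(f,g)$ is a stable kernel-cokernel pair, whence $\mathbb{E}\subseteq\mathbb{C}$. As $\mathbb{C}$ is itself exact by the first part, it contains every exact structure and is therefore the maximal one.
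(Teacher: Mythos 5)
The paper itself gives no proof of this theorem: it is quoted verbatim from Crivei \cite[Thm 3.5]{Crivei} (note the closing diamond marking an unproved, cited statement), so there is no in-paper argument to compare against. Judged on its own terms, your proposal has the right architecture: the self-duality of the class $\mathbb{C}$ of stable pairs, the reduction via Keller's axiom set from Remark \ref{AN-6}(iv) to \hypref{R0}--\hypref{R2} plus the dual of \hypref{R2}, the verification of \hypref{R0} via split pairs (correctly flagging where idempotent completeness enters), and the maximality argument are all sound and essentially match how Crivei organizes the result.

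However, there is a genuine gap exactly where you say you "expect the main obstacle" to be: you never actually prove \hypref{R1} and \hypref{R2} for $\mathbb{C}$. Two things are missing. First, even the "cheap" half of \hypref{R1} is not cheap: in a general additive category the composite of two cokernels need not be a cokernel, so showing that a composite of two semistable cokernels is again a (semistable) cokernel already requires a nontrivial diagram argument (this is \cite[Prop 3.1]{Crivei}), not merely "an analogous pullback manipulation." Second, and more seriously, the claim that the induced kernel $f'$ of a pulled-back or composite deflation is again a \emph{semistable} kernel --- i.e.\ that all pushouts of $f'$ exist and yield kernels --- is asserted with the phrase that the candidate maps are "recognizable as kernels only after splitting the relevant idempotents," but no such idempotents are exhibited and no argument is given. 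This interaction between the two one-sided semistability conditions is the actual content of Crivei's Propositions 3.1--3.4 (building on \cite{SW} and \cite{RumpMax}) and occupies the bulk of his proof; without it your text is an accurate roadmap of the theorem rather than a proof of it. To close the gap you would need to either carry out those verifications or cite them explicitly, as the paper does.
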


The following is a straightforward generalization of Theorem \ref{PROP-MAX} and a version of it has been noted already in \cite[Prop 9.2]{HR24}. It indeed holds true for weakly idempotent complete categories, but we formulate it only in the karoubian case as this is the setting which is of interest in the remainder.

\begin{thm}\label{PROP-MAX-D} Let $\mathcal{A}$ be a karoubian category. Let $\mathbb{D}$ be the class of all kernel-cokernel pairs $(f,g)$ in which $g$ is a semistable cokernel. Then $(\mathcal{A},\mathbb{D})$ is strongly deflation-exact. Moreover, $\mathbb{D}$ is the maximal deflation-exact structure on $\mathcal{A}$.
\end{thm}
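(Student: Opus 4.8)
The plan is to treat the two assertions in turn: that $(\mathcal{A},\mathbb{D})$ satisfies \hypref{R0}\,--\,\hypref{R3}, and that $\mathbb{D}$ dominates every deflation-exact structure. I would dispose of maximality first, as it is the soft direction. If $(\mathcal{A},\mathbb{E})$ is any deflation-exact structure and $(f,g)\in\mathbb{E}$, then $g=\cok f$ is a cokernel, and by \hypref{R2} for $\mathbb{E}$ every pullback of $g$ exists and is again a deflation, hence a cokernel; thus $g$ is a semistable cokernel and $(f,g)\in\mathbb{D}$, giving $\mathbb{E}\subseteq\mathbb{D}$. One recurring bookkeeping point for the converse is that a semistable cokernel $g$ becomes a deflation of $\mathbb{D}$ only once it has a kernel, so after each closure operation I must exhibit a kernel and verify that the map is the cokernel of it; the latter is automatic once the map is known to be some cokernel, since any cokernel equals the cokernel of its own kernel.

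For \hypref{R0}, the pair $(\id_X, X\to 0)$ is a kernel-cokernel pair whose deflation is a semistable cokernel, its pullback along the unique map $T\to 0$ being the biproduct projection $X\oplus T\to T$. For \hypref{R2}, given $(f,g)\in\mathbb{D}$ with $g\colon Y\to Z$ and an arbitrary $t\colon T\to Z$, the pullback of $g$ along $t$ exists with lower leg $g'\colon P\to T$ a cokernel by the definition of semistable cokernel; that $g'$ is itself semistable follows from pullback pasting, since for any $s$ the pullback of $g'$ along $s$ coincides with the pullback of $g$ along $ts$, which exists and is a cokernel. The kernel of $g'$ is the map induced by $\ker g$, so $g'$ is a deflation of $\mathbb{D}$. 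Note that neither step uses karoubianness.

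Axiom \hypref{R1} needs a short argument. Given semistable cokernels $g_1\colon Y\to Z$ and $g_2\colon Z\to W$ with kernels $f_1,f_2$, I would first realise $\ker(g_2g_1)$ as the pullback $h\colon K\to Y$ of $f_2$ along $g_1$, which exists because $g_1$ is semistable; a direct chase using $g_2=\cok f_2$ identifies $h$ with $\ker(g_2g_1)$. Next, forming the pullback $\tilde f_2\colon\widetilde X\to Y$ of $g_1$ along $f_2$ — whose defining projection $\widetilde X\to X_2$ is a cokernel, hence epic, again by semistability of $g_1$ — one checks that $g_2g_1$ is the cokernel of $(f_1,\tilde f_2)\colon X_1\oplus\widetilde X\to Y$ by factoring any morphism killing $f_1$ and $\tilde f_2$ successively through $g_1$ and $g_2$; in particular $g_2g_1$ is a cokernel and $(h,g_2g_1)\in\mathbb{D}$. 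Finally, semistability of the composite reduces to this same computation: an arbitrary pullback of $g_2g_1$ is, by pasting, the composite of pullbacks of $g_2$ and of $g_1$, both semistable cokernels by \hypref{R2}, so it too is a cokernel.

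The genuine obstacle is \hypref{R3}, the one-sided obscure axiom, and it is the sole place where karoubianness is indispensable. Let $i,p$ be composable with $p$ admitting a kernel $m=\ker p$ and $pi$ a deflation. First I would form the pullback $P=A\times_C B$ of $pi$ along $p$, which exists since $pi$ is semistable; its projection $b\colon P\to B$ is a cokernel, while the projection $a\colon P\to A$ is a retraction with section $(\id_A,i)$. Splitting the associated idempotent — here karoubianness enters — yields $P\cong A\oplus M$ under which $b$ becomes the morphism $(i,m)\colon A\oplus M\to B$ with components $i$ and $m$, so $(i,m)$ is a deflation. A factorization chase then shows $p=\cok m$ (note $pm=0$): any $u$ with $um=0$ satisfies $u\circ(i,m)=(ui,0)$, and since $ui$ kills $\ker(pi)$ it factors through $pi$, whence $u$ factors through $p$ because $(i,m)$ is epic. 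The remaining and most delicate point is that $p$ is not merely a cokernel but a semistable one; this I would obtain by the construction dual to the pushout argument for semistable kernels in \cite{Crivei}, realizing each pullback of $p$ as a quotient of the corresponding pullback of the deflation $p\circ(i,m)$. I expect this pullback-stability, together with the bookkeeping ensuring the resulting pairs lie in $\mathbb{D}$, to absorb most of the work; the end result matches \cite[Prop 9.2]{HR24} and is precisely the deflation-side of Crivei's Theorem \ref{PROP-MAX}.
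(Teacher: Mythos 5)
Your proposal is correct and follows essentially the same route as the paper, which verifies \textbf{R0} by hand and outsources \textbf{R1}--\textbf{R3} to \cite{Crivei}: your explicit arguments for maximality (immediate from \textbf{R2}), for \textbf{R0}, for \textbf{R2} via pullback pasting, for \textbf{R1} via the kernel $Y\times_Z X_2\to Y$ and the cokernel description of $g_2g_1$, and for the cokernel half of \textbf{R3} via splitting the retraction $P\to A$ are all sound (though note that this last splitting needs only the existence of $\ker p$, not karoubianness). The one step you leave at citation level --- that the $p$ of \textbf{R3} is not merely a cokernel but semistable, i.e.\ that the pullback of $p$ along an arbitrary $t\colon T\rightarrow C$ \emph{exists} --- is precisely the crux of \cite[Prop 3.4]{Crivei} and the only part the paper does not spell out either: one must realise $B\times_C T$ as the cokernel of a map $\ker(pi)\rightarrow (A\oplus M)\times_C T$, and the existence of that cokernel in a category without cokernels is where the real work (and the idempotent-completeness hypothesis) lives, so be aware that your closing sentence is a pointer rather than a proof.
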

\begin{proof} In order to see that \hypref{R0} holds let $X,T\in\mathcal{A}$ be given. Then the pullback of $X\rightarrow 0$ along $T\rightarrow0$ is the projection $p\colon X\oplus T\rightarrow T$. Let  $i\colon X\rightarrow X\oplus T$ be the inclusion. Then $p=\cok i$ which means that $X\rightarrow 0$ is a semistable cokernel and thus $X\rightarrow X\rightarrow 0$ belongs to $\mathbb{D}$. Next we see that \hypref{R1} follows from \cite[Prop 3.1]{Crivei} and  \hypref{R2} follows from \cite[Rmk 2.5]{Crivei}. Finally,  \cite[Prop 3.4]{Crivei} yields \hypref{R3}.
\end{proof}

Let $(\mathcal{A},\EE)$ be an exact category. A full additive subcategory $\mathcal{B}\subseteq\mathcal{A}$ is said to be \emph{fully exact}, if it is extension closed with respect to $\EE$; see e.g.~\cite[Dfn 10.5]{Buehler}. If this is the case, then $(\mathcal{B},\EE\cap\mathcal{B})$ is exact where $\EE\cap\mathcal{B}$ consists of those conflations from $\EE$ whose objects belong to $\mathcal{B}$, see e.g.~\cite[Lem 10.20]{Buehler}. Extension-closedness is however not necessary for $(\mathcal{B},\EE\cap\mathcal{B})$ being exact, cf.~the characterization given in \cite[Thm 2.6]{DS12}.

%

\smallskip
\section{LB-spaces}\label{SEC-2}

For the rest of the article let $\mathbb{K}$ be either the field of real numbers or the field of complex numbers. All vector spaces are vector spaces over $\mathbb{K}$. We denote by $\HDLCS$ the category of all Hausdorff locally convex topological vector spaces (short: lcs). If $E$ is a lcs and $F\subseteq E$ a linear subspace, then, if nothing else is said, we understand $F$ endowed with the subspace topology; furthermore, if $F$ is closed, then we understand $E/F$ endowed with the locally convex quotient topology. Given a linear and continuous map $f\colon X\rightarrow Y$ between lcs, the aforementioned applies in particular to $\ran f:=f(X)$, $\overline{f(X)}$, $f^{-1}(0)$, $X/f^{-1}(0)$ and $Y/\overline{f(X)}$. The notation $\ker f$, $\cok f$, $\im f$ and $\coim f$ will be used strictly in the sense of category theory and we will always indicate in which category we take a kernel, cokernel, image or coimage. In order to avoid misunderstandings, we shall occasionally use a prefix and write, e.g., $\mathcal{A}$-kernel, $\mathcal{A}$-semistable kernel, etc, if $\mathcal{A}\subseteq\HDLCS$ is given. In the category $\HDLCS$ and its full subcategories we  use the words `morphism', `linear and continuous map', or simply `map' synonymously. The symbol `$\hookrightarrow$' between lcs stands for the inclusion map of a linear, but not necessarily topological, subspace. For a lcs $E$ we denote its continuous dual space by $E'$ and we write $(E,\w):=(E,\sigma(E,E'))$ for $E$ being endowed with its weak topology.

\begin{dfn}\label{DFN-LB} An \emph{LB-space} is a lcs $(X,\tau)$ such that there exists a sequence of Banach spaces $X_0\rightarrow X_1\rightarrow\cdots$ with linear and continuous structure maps, such that $X=\ind_{n\in\mathbb{N}}X_n$ is the locally convex inductive limit of the sequence. By $\LB\subseteq\HDLCS$ we denote the full subcategory formed by all LB-spaces.\diam{}
\end{dfn}

If $X=\ind_{n\in\NN}X_n$ is an LB-space, then by Grothendieck's factorization theorem we may assume w.l.o.g.\ that the structure maps are inclusions of subspaces. We then write $X_0\hookrightarrow X_1\hookrightarrow\cdots$ and we may think of the LB-space as the union $X=\bigcup_{n=0}^{\infty}X_n$ endowed with the finest locally convex topology $\tau$ that makes all inclusion maps $X_n\hookrightarrow (X,\tau)$ continuous.

\begin{prop}\label{LB-PROP}\cite[Rmk 3.1.1]{DS16} The category $\LB$ is preabelian. More precisely:\vspace{2pt}
\begin{compactitem}

\item[(i)] The kernel of $f\colon X\rightarrow Y$ is the inclusion map $f^{-1}(0)^{\flat}\rightarrow X$, where $f^{-1}(0)^{\flat}$ is the LB-space $\ind_{n\in\mathbb{N}}f^{-1}(0)\cap X_n$.\vspace{3pt}

\item[(ii)] The cokernel of $f\colon X\rightarrow Y$ is the quotient map $Y\rightarrow Y/\overline{f(X)}$, where the quotient space is endowed with the locally convex quotient topology (which defines an LB-space).\vspace{3pt}

\item[(iii)] $f\colon X\rightarrow Y$ is a kernel iff $f$ is injective and has a closed range.

\vspace{3pt}

\item[(iv)] $g\colon Y\rightarrow Z$ is a cokernel iff $g$ is surjective.

\vspace{3pt}

\item[(v)] $X\stackrel{f}{\rightarrow}Y\stackrel{g}{\rightarrow}Z$ is a kernel-cokernel pair iff the sequence is \emph{algebraically exact}, i.e., iff $f$ is injective, $g$ is surjective and $f(X)=g^{-1}(0)$ holds as vector spaces.

\vspace{3pt}

\item[(vi)] $f$ is monic iff $f$ is injective.

\vspace{3pt}

\item[(vii)] $g$ is epic iff $g$ has a dense range.\diam{}

\end{compactitem}
\end{prop}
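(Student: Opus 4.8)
The plan is to prove (i) and (ii) first, since together they show every morphism has a kernel and a cokernel and hence that $\LB$ is preabelian; the remaining items (iii)--(vii) are then formal consequences in any preabelian category, once one knows how to compute kernels and cokernels. Two analytic facts do all the work: \emph{Grothendieck's factorization theorem} \cite{G55}, by which a continuous linear map from a Banach space into an LB-space $\ind_n X_n$ factors through some step $X_n$; and the \emph{open mapping theorem for LB-spaces} (De Wilde's theorem, applicable since every LB-space is ultrabornological and webbed), by which every continuous linear bijection between LB-spaces is already a topological isomorphism.

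For (i) I would first note that each $f^{-1}(0)\cap X_n=(f|_{X_n})^{-1}(0)$ is a closed subspace of the Banach space $X_n$, hence Banach, so $f^{-1}(0)^\flat=\ind_n f^{-1}(0)\cap X_n$ is an LB-space; it is Hausdorff because the inclusion $j\colon f^{-1}(0)^\flat\to X$ is a continuous injection into a Hausdorff space, and clearly $f\circ j=0$. For the universal property, given $t\colon T\to X$ with $f\circ t=0$ and $T=\ind_m T_m$, Grothendieck's theorem lets me factor each restriction $t|_{T_m}$ through some step $X_n$; since $\ran(t)\subseteq f^{-1}(0)$ this yields continuous maps $T_m\to f^{-1}(0)\cap X_n\hookrightarrow f^{-1}(0)^\flat$, which assemble by the universal property of $T=\ind_m T_m$ into the required (and visibly unique) factorization $\tilde t\colon T\to f^{-1}(0)^\flat$. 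For (ii) the subspace $N:=\overline{f(X)}$ is closed, so $Y/N$ is Hausdorff; writing $Y=\ind_n Y_n$ one has $Y/N=\ind_n Y_n/(Y_n\cap N)$ with each quotient a Banach space, and the locally convex quotient topology agrees with this inductive-limit topology since passing to a quotient is a colimit and commutes with the colimit defining $Y$, so $Y/N$ is again an LB-space. The universal property of the cokernel is then the universal property of the quotient topology, together with the observation that any $s$ with $s\circ f=0$ kills $\overline{f(X)}=N$.

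Items (vi) and (vii) are immediate: in a preabelian category $f$ is monic iff $\ker f=0$, and by (i) this means $f^{-1}(0)=0$, i.e.\ $f$ injective; dually $g$ is epic iff $\cok g=0$, and by (ii) this means $Z/\overline{\ran g}=0$, i.e.\ $g$ has dense range. For the two-sided statements I would use that $f$ is a kernel iff the canonical map $X\to\ker(\cok f)$ is an isomorphism, and dually for cokernels. If $f=\ker g$ then by (i) it is injective with closed range $g^{-1}(0)$; conversely, if $f$ is injective with closed range, then $\cok f=Y/f(X)$ by (ii), so $\ker(\cok f)=f(X)^\flat\hookrightarrow Y$ and the canonical map $X\to f(X)^\flat$ is a continuous linear \emph{bijection} of LB-spaces, hence an isomorphism by the open mapping theorem; this proves (iii). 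Symmetrically, if $g$ is surjective then $\cok(\ker g)=Y/g^{-1}(0)$ and the induced continuous bijection $Y/g^{-1}(0)\to Z$ is an isomorphism by the open mapping theorem, giving (iv). Finally (v) is assembled from (i)--(iv): a kernel-cokernel pair forces $f$ injective, $g$ surjective and $f(X)=g^{-1}(0)$, while conversely algebraic exactness makes $f(X)=g^{-1}(0)$ closed, so (iii) and (iv) (with the same open-mapping identifications $X\cong g^{-1}(0)^\flat$ and $Y/g^{-1}(0)\cong Z$) yield $f=\ker g$ and $g=\cok f$ simultaneously.

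The main obstacle is precisely the topological mismatch hidden in (iii)--(v): the categorical kernel carries the \emph{inductive-limit} topology $f^{-1}(0)^\flat$, which is a priori strictly finer than the subspace topology, and the categorical cokernel carries the quotient topology, so ``injective with closed range'' or ``surjective'' only produce continuous \emph{bijections} onto the relevant $\flat$- resp.\ quotient objects. Upgrading these bijections to isomorphisms is exactly where the open mapping theorem for LB-spaces is indispensable; without it the characterizations (iii), (iv) and (v) would fail. Everything else is either the universal-property bookkeeping of (i)--(ii) or the formal monic/epic dictionary of a preabelian category.
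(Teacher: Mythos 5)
Your proposal is correct. The paper itself gives no proof of this proposition but quotes it from \cite[Rmk 3.1.1]{DS16}, and your argument --- computing kernels via Grothendieck's factorization theorem and cokernels via the compatibility of quotient and inductive-limit topologies, then upgrading the continuous bijections onto $f(X)^{\flat}$ resp.\ $Y/g^{-1}(0)$ to isomorphisms with de~Wilde's open mapping theorem --- is precisely the standard reasoning underlying that reference, with the key subtlety (the $\flat$-topology versus the subspace topology) correctly identified and handled.
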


The $\flat$-topology is the ultrabornological topology associated with the subspace topology, see \cite[Chapter 13.3]{Jarchow} or \cite[Dfn 6.2.4]{BPC}, and it may be strictly finer than the subspace topology. First examples in which this happens are due to Grothen\-dieck \cite{G}, see Example \ref{EX-GROTH} below. Notice that in these examples the defining sequences of Banach spaces are \emph{strict} in the sense that $E_n\subseteq E_{n+1}$ is a topological subspace for every $n\in\NN$; the limit space is hence complete.

\begin{ex}\label{EX-GROTH}\cite[Ex 8.6.12 and 8.6.13]{BPC}\vspace{2pt}

\begin{compactitem}

\item[(i)] There exists a complete LB-space $(E,\tau)=\ind_{n\in\NN}E_n$ and a closed subspace $H\subseteq E$ such that $(H,\sigma)'\not=(H,\tau)'$ holds, where $(H,\sigma)=\ind_{n\in\NN}H\cap E_n$.

\vspace{4pt}

\item[(ii)] There exists a complete LB-space $(E,\tau)=\ind_{n\in\NN}E_n$ and a closed subspace $H\subseteq E$ such that $(H,\sigma)'=(H,\tau)'$, but $(H,\sigma)\not=(H,\tau)$, where $\sigma$ is defined  as in (i).\diam{}

\end{compactitem}
\end{ex}

\smallskip

From Example \ref{EX-GROTH}(i) we may derive in which way precisely $\LB$ fails to be (quasi)abelian.

\begin{rmk}\label{PAR-1}\cite[Proof of Prop 14]{Wegner17} In $\LB$, the \emph{parallel} of a morphism $f\colon X\rightarrow Y$, i.e., the canonical map $\bar{f}\colon\coim f\rightarrow \im f$ between coimage and image, is given by
$$
\bar{f}\colon X/f^{-1}(0)\rightarrow \overline{f(X)}^{\hspace{1pt}\flat}
$$
and it is always a monomorphism but in general not an epimorphism. An example for this effect is as follows. In the situation of Example \ref{EX-GROTH}(i) we pick $u\in(H,\sigma)'\backslash(H,\tau)'$ and consider the inclusion map $f\colon u^{-1}(0)^{\flat}\rightarrow E$. By the choice of $u$ we know that $u^{-1}(0)\subseteq(H,\sigma)$ is a proper closed subset while $u^{-1}(0)\subseteq(H,\tau)$ is dense. Thus, the inclusion map
\begin{equation*}
\bar{f}\colon u^{-1}(0)^{\flat}\rightarrow(\hspace{1pt}\overline{u^{-1}(0)}^{\scriptscriptstyle(E,\tau)})^{\flat}=(H,\tau)^{\flat}=(H,\sigma)
\end{equation*}
has closed range without being surjective and therefore cannot be epic.\diam{}
\end{rmk}

\begin{rmk}\label{RMK-LB-1}\cite[Proof of Thm 3.4]{HSW} As $\LB$ is preabelian, arbitrary pullbacks and arbitrary pushouts exist. Indeed, if $g\colon Y\rightarrow Z$ and $t\colon T\rightarrow Z$ are morphisms, the pullback of $g$ along $t$ is given by
\begin{equation}\label{PB-00}
\hspace{-39pt}\begin{tikzcd}[column sep=20pt,row sep=21pt]
[t\;\shortminus\hspace{-2pt}g]^{-1}(0)^{\flat}\arrow[d, swap, "p_Y"]\arrow[r, "p_T"]\commutes[\mathrm{PB}]{dr}& T \arrow[d, "t"]\\
Y\arrow[r, swap, "g"]& Z.
\end{tikzcd}
\end{equation}
Assume now that $g$ is a cokernel and $t$ is arbitrary. Then $g$ is surjective and it follows that $p_T$  is surjective, too. Consequently, in $\LB$ pullbacks of cokernels along arbitrary morphisms are again cokernels. Consider morphisms $f\colon X\rightarrow Y$ and $t\colon X\rightarrow T$. Then the pushout of $f$ along $t$ is given by
\begin{equation}\label{PO-00}
\begin{tikzcd}
X\arrow{r}{f}\arrow{d}[swap]{t}\commutes[\mathrm{PO}]{dr} & Y \arrow{d}{q_Y}\\[4pt]
T \arrow{r}[swap]{q_T} & \frac{Y\oplus T}{\overline{\operatorname{ran}[f\:\sm\hspace{0pt}t]^{\operatorname{T}}}}.
\end{tikzcd}
\end{equation}
Now let us give an example of a pushout diagram \eqref{PO-00} in which $f$ is a kernel but $q_T$ is not. Indeed, in the situation of Example \ref{EX-GROTH}(i) let $f\colon (H,\sigma)\rightarrow(E,\tau)$ be the inclusion map, let $u$ be as in the previous paragraph and let $t\colon(H,\sigma)\rightarrow(H,\sigma)/u^{-1}(0)$ be the quotient map. Then $f$ is a kernel, but
$$
q_T(x+u^{-1}(0))=\bigl[\begin{smallmatrix}0\\x+u^{-1}(0)\end{smallmatrix}\bigr]+\overline{\bigl\{\bigl[\begin{smallmatrix}y\\-(y +u^{-1}(0))\end{smallmatrix}\bigr]|\,y\in H\bigr\}}=0
$$
holds for every $x\in X$ while $(H,\sigma)/u^{-1}(0)\not=0$.\diam{}
\end{rmk}

Remarks \ref{PAR-1} and \ref{RMK-LB-1}  mean that $\LB$ is left quasiabelian (left almost abelian in \cite[p.~167f]{Rump01}), but not right semiabelian and thus in particular not quasiabelian. We shall use these notions only occasionally in the remainder, but we refer to \cite{KW} and \cite[Section 2]{HSW} for more details.

\smallskip

Let us now consider conflation structures on $\LB$. Observe firstly that, if $(f,g)$ is a kernel-cokernel pair, $\ran f=g^{-1}(0)$ is automatically closed, but not automatically an LB-space in the induced topology. In particular, $f$ is in general not an isomorphism onto its range by Example \ref{EX-GROTH}(ii) and may even fail to be a weak isomorphism onto its range by Example \ref{EX-GROTH}(i). On the other hand, $g$ is always open due to the open mapping theorem.

\begin{thm}\label{LB-confl-str}\cite[Thm 9.1]{HKRW}, \cite[Prop 3.1.2]{DS16}, \cite[Prop 3.3]{DS12} In the category of LB-spaces the class of all kernel-cokernel pairs\vspace{-3pt}
\begin{equation*}
\begin{aligned}
\Call&:=\bigl\{(f,g)\in\LB\times\LB\;|\:f=\ker g\:\text{and } g=\cok f\hspace{1pt}\bigr\}\hspace{89pt}\\[-1pt]
&\phantom{:}=\bigl\{X\stackrel{f}{\rightarrow}Y\stackrel{g}{\rightarrow}Z\in\LB\text{ is algebraically exact}\hspace{1pt}\bigr\}\\[3.5pt]
&\phantom{:}=\Dmax
\end{aligned}
\end{equation*}
is a strongly deflation-exact conflation structure with admissible kernels and it is the maximal conflation structure on $\LB$. Moreover, we have
\begin{equation*}
\begin{aligned}
\Emax&:=\bigl\{(f,g)\in\Call\;|\;(f,g)\;\text{is a stable kernel-cokernel pair}\hspace{1pt}\bigr\}\hspace{70pt}\\
&\phantom{:}=\bigl\{(f,g)\in\Call\;|\;(\ran f,\w)=(g^{-1}(0)^{\flat},\w)\text{ as topological spaces}\hspace{1pt}\bigr\}\hspace{71pt}
\end{aligned}
\end{equation*}
which is the maximal exact structure on $\LB$. Finally,
\begin{equation*}
\begin{aligned}
\Etop&:=\bigl\{(f,g)\in\Call\;|\;\ran f=g^{-1}(0)^{\flat}\text{ as topological spaces}\hspace{1pt}\bigr\}\hspace{27.5pt}
\end{aligned}
\end{equation*}
is another exact structure on $\LB$\emph{;} $\Etop\subset\Emax\subset\Dmax$ holds with strict inclusions.\diam{}
\end{thm}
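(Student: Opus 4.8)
The plan is to assemble the three structures from the abstract results of Section~\ref{SEC-2a} together with the explicit descriptions of kernels, cokernels and pushouts in $\LB$ from Proposition~\ref{LB-PROP} and Remark~\ref{RMK-LB-1}. Since $\LB$ is preabelian, every morphism---in particular every idempotent---has a kernel, so $\LB$ is karoubian and Theorems~\ref{PROP-MAX} and~\ref{PROP-MAX-D} apply. To identify $\Call$ with $\Dmax$ I would first show that in $\LB$ \emph{every} cokernel is a semistable cokernel: a cokernel is a surjection by Proposition~\ref{LB-PROP}(iv), the pullback along any morphism exists by Remark~\ref{RMK-LB-1}, and that same remark shows the pullback map of a surjection is again surjective, hence again a cokernel. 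Thus the class $\mathbb{D}$ of Theorem~\ref{PROP-MAX-D}---kernel--cokernel pairs whose second map is a semistable cokernel---is all of $\Call$, which is therefore strongly deflation-exact and the maximal deflation-exact, and \emph{a fortiori} the maximal conflation, structure. Admissibility of kernels is then immediate: every morphism has a kernel, and a kernel $f$ satisfies $f=\ker(\cok f)$ in any preabelian category, so $(f,\cok f)\in\Call$ exhibits $f$ as an inflation.

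For $\Emax$, Theorem~\ref{PROP-MAX} already yields that the stable kernel--cokernel pairs form the maximal exact structure on the karoubian category $\LB$, which is exactly the first line of the definition of $\Emax$. Since every cokernel is semistable, a pair $(f,g)$ is stable if and only if $f$ is a semistable kernel, so the real content is the functional-analytic equivalence
$$
f\ \text{a semistable kernel}\quad\Longleftrightarrow\quad(\ran f,\w)=(g^{-1}(0)^{\flat},\w).
$$
Normalising $f$ to the inclusion $g^{-1}(0)^{\flat}\hookrightarrow Y$, the right-hand condition says precisely that $(g^{-1}(0)^{\flat})'=Y'|_{\ran f}$, i.e.\ every $\flat$-continuous functional on the range extends to $Y$. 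For the direction ``$\Rightarrow$'' (in contrapositive form) I would reuse the mechanism of Remark~\ref{RMK-LB-1}: given $u\in(g^{-1}(0)^{\flat})'\setminus Y'|_{\ran f}$, the set $u^{-1}(0)$ is $\flat$-closed but dense in the subspace topology, so forming the pushout \eqref{PO-00} of $f$ along the quotient $t\colon g^{-1}(0)^{\flat}\to g^{-1}(0)^{\flat}/u^{-1}(0)^{\flat}$ collapses the summand $0\oplus T$ into the closure $\overline{\ran[f\,\sm\,t]^{\operatorname{T}}}$, so that $q_T$ fails to be injective and $f$ is not semistable.

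The converse ``$\Leftarrow$'' is the step I expect to be the main obstacle: assuming the duals agree one must show that for \emph{every} test map $t$ the pushout map $q_T$ is again a kernel, i.e.\ injective with closed range. The idea is that when the weak topologies coincide, $f$ is a weak topological embedding onto the weakly closed subspace $\ran f$ (closed and convex, hence weakly closed), and then to combine Hahn--Banach with the explicit formula \eqref{PO-00} to verify that $\overline{\ran[f\,\sm\,t]^{\operatorname{T}}}$ meets the summand $0\oplus T$ only in the origin---giving injectivity of $q_T$---and that its range is closed. Controlling this closure uniformly in $t$, rather than for a single diagnostic $t$ as in the easy direction, is the delicate functional-analytic point.

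Finally, $\Etop\subseteq\Emax$ holds because equality of the genuine topologies on $\ran f$ and $g^{-1}(0)^{\flat}$ forces equality of the associated weak topologies, and $\Emax\subseteq\Dmax=\Call$ is trivial. To see that $\Etop$ is itself exact I would verify the Quillen axioms directly, the substance being that the property ``$f$ is a topological embedding onto $g^{-1}(0)^{\flat}$'' is preserved under the pushouts of inflations in \eqref{PO-00} and the pullbacks of deflations in \eqref{PB-00}; by Remark~\ref{AN-6}(iv) it suffices to check \hypref{R0}--\hypref{R2} together with the dual \hypref{L2}. Strictness of the two inclusions is then read off from Grothendieck's examples: Example~\ref{EX-GROTH}(ii) produces a kernel--cokernel pair with $(H,\sigma)'=(H,\tau)'$ but $(H,\sigma)\neq(H,\tau)$, hence a member of $\Emax\setminus\Etop$, while Example~\ref{EX-GROTH}(i) produces one with $(H,\sigma)'\neq(H,\tau)'$, hence a member of $\Dmax\setminus\Emax$.
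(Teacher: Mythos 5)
The paper itself offers no proof of this theorem: it is imported verbatim (note the closing diamond) from \cite[Thm 9.1]{HKRW}, \cite[Prop 3.1.2]{DS16} and \cite[Prop 3.3]{DS12}, so there is no internal argument to compare against. That said, your reconstruction follows exactly the toolkit the paper assembles around the statement: karoubianness of the preabelian category $\LB$ feeding Theorems \ref{PROP-MAX} and \ref{PROP-MAX-D} for the two maximality claims, the surjectivity-of-pullbacks argument of Remark \ref{RMK-LB-1} showing that every $\LB$-cokernel is semistable (whence $\Call=\Dmax$, the admissibility of kernels, and the reduction of stability of $(f,g)$ to semistability of $f$), and Grothendieck's Examples \ref{EX-GROTH}(i) and (ii) for the strictness of $\Emax\subset\Dmax$ and $\Etop\subset\Emax$. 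All of this is correct, and your contrapositive argument for ``$f$ semistable $\Rightarrow$ well-located'' via the pushout along the quotient by $u^{-1}(0)$ is sound; it is the same mechanism as Remark \ref{RMK-LB-1}.

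Two steps remain genuinely open in your write-up, and they are precisely where the cited sources (and the paper's own later material) do the work. First, for ``weak topologies agree $\Rightarrow$ $f$ semistable'' you locate the difficulty in controlling the closure of $\operatorname{ran}[f\:\sm\hspace{-1.5pt}t]^{\operatorname{T}}$ uniformly in $t$; in fact the closedness of $\ran q_T$ is the \emph{unconditional} part (Lemma \ref{LEM-D}(i) needs only that $f$ has closed range), and the entire weight of the weak-dual hypothesis is carried by the injectivity of $q_T$. The paper obtains this through the chain (a)$\Rightarrow$(b)$\Rightarrow$(c)$\Rightarrow$(d) of Lemma \ref{LEM-D}(ii): equality of duals makes every $t\colon(X,\tau_Y)\rightarrow T$ have closed graph, hence $\operatorname{ran}[f\:\sm\hspace{-1.5pt}t]^{\operatorname{T}}$ is already closed, hence $q_T$ is injective. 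Your Hahn--Banach sketch does not yet produce this, and Remark \ref{RMK-LB-LAST} points forward to exactly this lemma. Second, exactness of $\Etop$ cannot be had by restriction from the quasiabelian category $\HDLCS$, since $\LB$ is not extension closed there; a direct verification of the axioms must confirm that the $\LB$-pushout \eqref{PO-00} of an $\Etop$-inflation is again a topological embedding onto the $\flat$-topologized kernel of the resulting deflation, which is not automatic. The paper sidesteps this by invoking the subcategory criterion of \cite[Thm 2.6]{DS12} (resp.\ \cite[Prop 3.3]{DS12}). Neither point is a wrong turn, but both must be filled in before the argument is complete.
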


The above is closely related to the following classic notions: Let $E=\ind_{n\in\NN}E_n$ be an LB-space. A closed subspace $F\subseteq E$ is said to be a \emph{limit subspace} if on $F$ the topology induced by $E$ and the $\flat$-topology coincide. It is said to be a \emph{well-located subspace} if these two topologies lead to the same dual space, or, equivalently, if the weak topologies coincide, see \cite[Section 8.6]{BPC}.

\begin{rmk}\label{RMK-LB-LAST}For later use we note the following characterization of semi\-stable (co)ker\-nels in $\LB$. The first statement we explained in Remark \ref{RMK-LB-1}, the second is due to Dierolf \cite{DPriv}, and follows from Lemma \ref{LEM-D} below.\vspace{3pt}
\begin{compactitem}

\item[(i)] In $\LB$ every cokernel is semistable.\vspace{3pt}

\item[(ii)] In $\LB$ a kernel $f\colon X\rightarrow Y$ is semistable if and only if $\ran f\subseteq Y$ is well-located.\diam{}

\end{compactitem}
\end{rmk}

Traditionally, an algebraically exact sequence $X\stackrel{\scriptscriptstyle f}{\rightarrow}Y\stackrel{\scriptscriptstyle g}{\rightarrow}Z$ of lcs is called \emph{topologically exact} if $f$ is an isomorphism onto its range and $g$ is open, see e.g.~\cite[Section 26]{MV}. Indeed, the topologically exact sequences are precisely the kernel-cokernel pairs in the quasiabelian category $\HDLCS$, see \cite{Prosmans}. The exact structure $\Etop$ is thus the restriction of the natural\,(=\hspace{0.5pt}maximal) exact structure of $\HDLCS$ to $\LB$; however $\LB\subseteq\HDLCS$ is \emph{not} extension closed, see \cite[Table on p.~2114]{DS12} or \cite[Section 2.3]{FW11}. Let us finally point out that not every cokernel in $\LB$ is an $\Emax$-inflation as its kernel might fail to be semistable. On the other hand, every semistable kernel is an $\Emax$-inflation.

\medskip

In the following two sections we investigate the properties of the subcategories of $\LB$ formed by the regular and complete LB-spaces, respectively. In particular, we study their natural conflation structures. We shall use letters $\CC$ for classes of all kernel-cokernel pairs, $\EE$ for exact structures and $\mathbb{D}$ for deflation-exact structures; $\CC$ and $\mathbb{D}$ falling together, as it happens in $\LB$, will turn out to be a coincidence and not a rule.

\smallskip

\section{Regular LB-spaces}\label{SEC-3}

Let $X=\ind_{n\in\NN} X_n$ be the inductive limit of the sequence of Banach spaces $X_0\rightarrow X_1\rightarrow\cdots$  with arbitrary structure maps and let $i_n\colon X_n\rightarrow X$ be the natural maps. Then for every bounded set $B\subseteq X_n$ the set $i_n(B)\subseteq X$ is bounded in $X$. Regularity describes those spaces in which all bounded sets arise in the aforementioned way.

\begin{dfn}\label{DFN-LBR} An LB-space $X=\ind_{n\in\NN} X_n$, w.l.o.g.\ given by a sequence where the structure maps are inclusions, is \emph{regular} if for any bounded set $B\subseteq X$ there exists $n\in\NN$ such that $B\subseteq X_n$ holds and $B$ is bounded in $X_n$. We denote by $\LBr\subseteq\LB$ the full subcategory consisting of the regular LB-spaces.\diam{}
\end{dfn}

It is nontrivial to see that there are LB-spaces which actually fail to be regular: The first such space was defined by K\"othe \cite[p.~326]{K48}, who in \cite[p.~629]{K50} states and proves that it is not (sequentially) complete. Without using the word `regular', Grothendieck \cite[Chapitre I, p.~12--13]{G55} refers to K\"othe's space when he points out that countable inductive limits of Banach spaces may fail to be complete or fail to be regular.

\smallskip

Regularity can be defined in the obvious way for arbitrary sequences $X_0\rightarrow X_1\rightarrow \cdots$ of Banach spaces that define an LB-space $X$. By Grothendieck's factorization theorem $X$ is then regular in the sense of Definition \ref{DFN-LBR} if and only if one, or equivalently, every defining sequence is regular.

\smallskip

\begin{rmk}\label{Stand-RES} While it is straightforward to check that regularity inherits to closed subspaces furnished with the $\flat$-topology, it in general does not inherit to $\LB$-cokernels. Indeed, for any (regular or not) LB-space $X=\ind_{n\in\NN}X_n$ we have the `standard resolution', see \cite[Equation (1) on p.~58]{Vogt92},
\begin{equation}\label{RES}
\Bigosum{n\in\mathbb{N}}{}X_n\stackrel{d}{\longrightarrow}\Bigosum{n\in\mathbb{N}}{}X_n\stackrel{\sigma}{\longrightarrow}X
\end{equation}
with $d((x_n)_{n\in\NN})=(x_n-x_{n-1})_{n\in\NN}$, $x_{-1}=0$, and $\sigma((x_n)_{n\in\NN})=\sum_{n\in\NN}x_n$. The above is a kernel-cokernel pair in $\LB$ and the direct sums are regular LB-spaces w.r.t.\ the spectrum $X_0\hookrightarrow X_0\oplus X_1\hookrightarrow X_0\oplus X_1\oplus X_2\hookrightarrow\cdots$. In particular, every non-regular LB-space is the quotient of a regular one.\diam{}
\end{rmk}

It is well-known that regularity of LB-spaces is equivalent to so-called Mackey completeness (or local completeness), see e.g.~\cite[Lem 7.3.3]{BPC}: Let $E$ be a lcs and $B$ be a disk, i.e., a bounded and absolutely convex set. Then $E_B:=\spann B$ endowed with the Minkowski functional of $B$ is the so-called auxiliary normed space. We say that a sequence $(x_k)_{k\in\NN}\subseteq E$ is Mackey convergent, resp.\ Mackey Cauchy, if there exists a disk $B\subseteq E$ such that $(x_k)_{k\in\NN}\subseteq E_B$ is convergent, resp.\ Cauchy. The space $E$ is Mackey complete if every Mackey Cauchy sequence is Mackey convergent. If $E$ is not Mackey complete, we consider $E$ first as a subspace of its (usual) completion $\widehat{E}$ and then form the intersection of all Mackey complete subspaces of $\widehat{E}$ containing $E$, i.e., we put
$$
E^{\sharp}\hspace{4pt}=\hspace{-4pt}\mathop{\textstyle\bigcap}_{\stackrel{E\subseteq F\subseteq \widehat{E}}{\stackrel{F \text{ Mackey}}{\scriptscriptstyle\text{complete}}}}\hspace{-3pt}F.
$$ 
The latter, endowed with the topology induced by $\widehat{E}$, is then a Mackey complete space which we refer to as the \emph{Mackey completion}, or \emph{mackeyfication} of $E$, see \cite[Dfn 5.1.21]{BPC}. Given a linear and continuous map $f\colon E\rightarrow F$ between lcs, this map extends uniquely to a linear and continuous map $f^{\sharp}\colon E^{\sharp}\rightarrow F^{\sharp}$, see \cite[Prop 5.1.25]{BPC}. As it turns out, the mackeyfication of an LB-space is a regular LB-space, cf.~Doma\'nski \cite[p.~61]{Do98}, and we get the following.

\begin{lem}\label{DOM-LEM} Mackeyfication defines a functor $\mac\colon\LB\rightarrow\LBr$, $X\mapsto X^{\sharp}$, $f\mapsto f^{\sharp}$, which is left adjoint to the inclusion functor $\operatorname{inc}\colon\LBr\rightarrow\LB$. More precisely, for $X\in\LB$ and $Y\in\LBr$, the restriction $f\mapsto f|_X$ yields natural isomorphisms:
$$
\Hom_{\LBr}(\mac X,Y)\cong\Hom_{\LB}(X,\inc Y).
$$
\end{lem}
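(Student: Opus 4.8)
The plan is to deduce the adjunction formally from the universal extension property of the mackeyfication, together with the fact that the regular LB-spaces are precisely the Mackey complete ones. First I would check that $\mac$ is a well-defined functor. On objects this is the cited result of Doma\'nski \cite{Do98} that $X^{\sharp}\in\LBr$. On morphisms, for $f\colon X\rightarrow Y$ in $\LB$ the map $f^{\sharp}\colon X^{\sharp}\rightarrow Y^{\sharp}$ furnished by \cite[Prop 5.1.25]{BPC} is \emph{the unique} continuous linear extension of $f$. Uniqueness here is automatic: since $X\subseteq X^{\sharp}\subseteq\widehat{X}$ and $X$ is dense in $\widehat{X}$, the subspace $X$ is dense in $X^{\sharp}$ (its closure in $X^{\sharp}$ is $X^{\sharp}\cap\overline{X}^{\,\widehat{X}}=X^{\sharp}$), and $Y^{\sharp}$ is Hausdorff. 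Functoriality then follows at once: both $\id_{X^{\sharp}}$ and $g^{\sharp}\circ f^{\sharp}$ are continuous linear maps restricting on $X$ to $\id_X$ resp.\ $g\circ f$, so by uniqueness they agree with $\id_X^{\sharp}$ resp.\ $(g\circ f)^{\sharp}$.

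Next I would set up the two maps between the Hom-sets. The key simplifying observation is that a regular $Y\in\LBr$ is Mackey complete, so mackeyfication fixes it, $Y^{\sharp}=Y$ (being Mackey complete, $Y$ is itself among the subspaces over which the intersection defining $Y^{\sharp}$ is taken). Define $\Phi\colon\Hom_{\LBr}(\mac X,Y)\rightarrow\Hom_{\LB}(X,\inc Y)$ by restriction $g\mapsto g|_X$, which is well defined because $X\hookrightarrow X^{\sharp}$ is continuous; this is the map asserted in the statement. Define $\Psi$ in the opposite direction by $h\mapsto h^{\sharp}$, using $Y^{\sharp}=Y$ so that $h^{\sharp}\colon X^{\sharp}\rightarrow Y^{\sharp}=Y$ indeed lands in $\Hom_{\LBr}(\mac X,Y)$.

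Then I would verify that $\Phi$ and $\Psi$ are mutually inverse. The identity $\Phi\circ\Psi=\id$ is exactly the extension property $h^{\sharp}|_X=h$. For $\Psi\circ\Phi=\id$, given $g\colon X^{\sharp}\rightarrow Y=Y^{\sharp}$, the map $(g|_X)^{\sharp}$ is the unique continuous linear extension of $g|_X$ to $X^{\sharp}$; but $g$ is itself such an extension, so by the uniqueness already invoked $(g|_X)^{\sharp}=g$. Naturality in both variables is purely formal: for $\phi\colon Y\rightarrow Y'$ in $\LBr$ one has $(\phi\circ g)|_X=\phi\circ(g|_X)$, and for $\psi\colon X'\rightarrow X$ in $\LB$ one has $\psi^{\sharp}|_{X'}=\psi$, whence $(g\circ\mac\psi)|_{X'}=(g|_X)\circ\psi$.

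The only genuinely nontrivial inputs are the two cited facts\,---\,that $X^{\sharp}$ is again a regular LB-space and that the extension $f^{\sharp}$ exists\,---\,so there is no real obstacle to overcome; the one point requiring care is the repeated appeal to uniqueness of the extension, which rests entirely on the density of $X$ in $X^{\sharp}$. Everything else is a formal bookkeeping of restriction against extension, and I expect the write-up to consist mainly of recording these identities.
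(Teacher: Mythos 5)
Your formal adjunction argument is fine and in fact more explicit than the paper's, which disposes of that part in one sentence (``From this the adjunction follows immediately''), relying on the equivalence regular $\Leftrightarrow$ Mackey complete $\Leftrightarrow$ $X=X^{\sharp}$; your unit/counit verification via density of $X$ in $X^{\sharp}$ and uniqueness of continuous extensions is correct and matches what the paper leaves implicit.

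However, there is a genuine gap at the point you dispatch in one line: ``On objects this is the cited result of Doma\'nski that $X^{\sharp}\in\LBr$.'' The paper explicitly states that Doma\'nski \emph{omitted a proof} of this claim, and the bulk of the paper's proof of the lemma is devoted to supplying it. The claim is not routine: the Mackey completion $X^{\sharp}$ is by construction a Mackey complete subspace of $\widehat{X}$, but it is not at all obvious that it is an \emph{LB-space} --- indeed, the paper emphasizes elsewhere that it is unknown whether even the ordinary completion $\widehat{X}$ of an LB-space is an LB-space, so one cannot wave at completeness-type constructions preserving the class $\LB$. The paper's argument runs as follows: $X^{\sharp}$ is bornological and equals the inductive limit of \emph{all} its auxiliary normed spaces $(X^{\sharp})_B$ by \cite[Props 6.2.8 and 6.1.8]{BPC}; since $X$ is a DF-space, the closures $\overline{B_n}^{X}$ of the steps' unit balls form a fundamental sequence of bounded sets in $X$, and one then checks (using \cite[Cor 8.3.17(i)]{BPC}) that the $\overline{B_n}^{X^{\sharp}}$ form a fundamental sequence of bounded sets in $X^{\sharp}$; this cuts the uncountable inductive system down to the countable one $X^{\sharp}=\ind_{n}(X^{\sharp})_{\overline{B_n}^{X^{\sharp}}}$, whose steps are Banach because closed disks in a Mackey complete space are Banach disks. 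Without some such argument your proof rests on an unproved assertion, and it is precisely the assertion that carries the mathematical weight of the lemma.
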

\begin{proof} As Doma\'nski omitted a proof, we first fill in the details on why $\mac\colon\LB\rightarrow\LBr$ is well-defined. Let $X=\ind_{n\in\NN}X_n$ be an LB-space. By \cite[Props 6.2.8 and 6.1.8]{BPC} the Mackey completion is bornological and can be written as the inductive limit of all auxiliary normed spaces, i.e.,
\begin{equation}\label{BOR}
X^{\sharp}=\mathop{\ind}_{\stackrel{\scriptscriptstyle B\subseteq X^{\sharp}}{\scriptscriptstyle\hspace{-2pt}\phantom{|^i}\text{disk}\phantom{|^i}}}(X^{\sharp})_B.
\end{equation}
Moreover, as an LB-space, $X$ is a DF-space, see \cite[Cor 8.3.20]{BPC}. The proofs of \cite[Prop 8.3.16(i)--(iii)]{BPC}, or the comments in  \cite[p.~79]{Klaus}, thus yield that the system $(\overline{B_n}^{\scriptscriptstyle X})_{n\in\NN}$, formed by the closures of the step's unit balls $B_n\subseteq X_n$, is a fundamental sequence of bounded sets for $X$, i.e., we have
\begin{equation}\label{FSB}
\forall\:B\subseteq X\text{ bounded}\;\exists\:n\in\NN,\,\lambda>0\colon B\subseteq \lambda\overline{B_n}^{\scriptscriptstyle X}.
\end{equation}
We claim now that $(\overline{B_n}^{\scriptscriptstyle X^{\sharp}})_{n\in\NN}$ is a fundamental system of bounded sets for $X^{\sharp}$. Let $C\subseteq X^{\sharp}$ be bounded. Then $C\subseteq\widehat{X}$ is bounded as well and by \cite[Cor 8.3.17(i)]{BPC} there exists a bounded $B\subseteq X$ such that $C\subseteq\overline{B}^{\scriptscriptstyle\widehat{X}}$ holds. By \eqref{FSB} we find $n\in\NN$ and $\lambda>0$ such that 
$$
C=C\cap X^{\sharp} \subseteq\overline{B}^{\scriptscriptstyle\widehat{X}}\cap X^{\sharp}=\overline{B}^{\scriptscriptstyle X^{\sharp}}\subseteq \lambda \overline{B_n}^{\scriptscriptstyle X^{\sharp}}
$$
holds, which establishes the claim and implies that in \eqref{BOR} we get the same limit space if we take the inductive limit only over the following countable subsystem
$$
X^{\sharp}=\mathop{\ind}_{n\in\NN}(X^{\sharp})_{\scalebox{0.8}{$\overline{B_n}^{\scriptscriptstyle X^{\sharp}}$}}.
$$
Here, the auxiliary normed spaces are Banach since the $\overline{B_n}^{\scriptscriptstyle X^{\sharp}}$ are by construction closed disks in a Mackey complete space, see \cite[Prop 5.1.6]{BPC}. Thus, $X^{\sharp}$ is an LB-space which is regular by construction, or by applying \cite[Lem 7.3.3]{BPC}.

\smallskip

As we have seen, an LB-space $X$ is regular if and only if it is Mackey complete if and only if $X=X^{\sharp}$ holds. From this the adjunction follows immediately.
\end{proof}

Using mackeyfication, we can now prove the existence of cokernels in $\LBr$. Below one may conclude (i) from \cite[Prop 4.5.15(i)]{Riehl} and (ii) from \cite[Thm 4.5.3]{Riehl}, or check the statements explicitly.

\begin{prop}\label{LBrKERCOK} The category $\LBr$ is preabelian. More precisely:\vspace{3pt}
\begin{compactitem}
\item[(i)] The kernel of $f\colon X\rightarrow Y$ is the inclusion $f^{-1}(0)^{\flat}\rightarrow X$, \vspace{0pt}
\item[(ii)] The cokernel of $f\colon X\rightarrow Y$ is the composition $i\circ q\colon Y\rightarrow\bigl(Y/\overline{f(X)}\hspace{1pt}\bigr)^{\!\sharp}$ of quotient and inclusion map.\vspace{3pt}

\item[(iii)] $f\colon X\rightarrow Y$ is a kernel iff $f$ is injective and has a closed range.\vspace{2pt}

\item[(iv)] $g\colon Y\rightarrow Z$ is a cokernel iff $g$ is relatively open and its range is dense.\vspace{0.5pt}

\item[(v)] $X\stackrel{f}{\rightarrow}Y\stackrel{g}{\rightarrow}Z$ is a kernel-cokernel pair iff $f$ is injective, $f(X)=g^{-1}(0)$ holds as vector spaces and $g$ is relatively open and its range is dense.

\vspace{2pt}

\item[(vi)] $f$ is monic iff $f$ is injective.

\vspace{2pt}

\item[(vii)] $g$ is epic iff $g$ has a dense range.
\end{compactitem} 
\end{prop}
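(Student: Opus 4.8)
The plan is to exploit that, by Lemma \ref{DOM-LEM}, $\LBr$ is a full reflective subcategory of $\LB$, with reflector $\mac\dashv\inc$. Reflective subcategories inherit all limits (the inclusion creates them) and admit all colimits the ambient category has, computed by applying the reflector; this is precisely the content of the references cited just before the statement, and it reduces everything to Proposition \ref{LB-PROP}.

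First I would treat kernels. For $f\colon X\to Y$ in $\LBr$ I take the $\LB$-kernel $f^{-1}(0)^{\flat}\to X$ from Proposition \ref{LB-PROP}(i) and check that it lands in $\LBr$: since $f^{-1}(0)$ is a closed subspace of the regular space $X$, its $\flat$-topology is again regular by Remark \ref{Stand-RES}. Fullness of $\LBr\subseteq\LB$ then upgrades the $\LB$-universal property to an $\LBr$-universal property, giving (i). Statement (iii) follows in the forward direction because every kernel has the form $g^{-1}(0)^{\flat}$, hence is injective with closed range; and (vi) follows because $f$ is monic iff $\ker f=0$ iff $f^{-1}(0)=0$, i.e.\ iff $f$ is injective.

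Next, cokernels. As $\mac$ is a left adjoint it preserves colimits, so the $\LBr$-cokernel of $f$ is obtained by applying $\mac$ to the $\LB$-cokernel $Y\to Y/\overline{f(X)}$ of Proposition \ref{LB-PROP}(ii). Since $Y$ is regular the unit $Y\to\mac Y$ is an isomorphism, whence by naturality the cokernel map is the composite $Y\to Y/\overline{f(X)}\to(Y/\overline{f(X)})^{\sharp}$ of the quotient map with the canonical embedding into the mackeyfication; this is (ii). From (ii) I read off (vii): $f$ is epic iff $\cok f=0$ iff $Y/\overline{f(X)}=0$ (the mackeyfication of a nonzero space is nonzero) iff $\overline{f(X)}=Y$. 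The remaining backward direction of (iii) also drops out of (ii): if $f$ is injective with closed range then $f$ is an $\LB$-kernel by Proposition \ref{LB-PROP}(iii), and since $(\cok_{\LBr}f)^{-1}(0)=f(X)$, the $\LBr$-kernel of $\cok_{\LBr}f$ is $f(X)^{\flat}\to Y$, which is identified with $f$, so $f=\ker(\cok_{\LBr}f)$ is a kernel.

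Finally, the characterizations (iv) and (v). Reading off (ii), every cokernel factors as an open surjection followed by a dense topological embedding into a mackeyfication, hence is open onto its range with dense range; combined with (iii) and the relation $f(X)=g^{-1}(0)$ this gives the forward direction of (iv) and all of (v). The crux is the converse in (iv): given $g\colon Y\to Z$ in $\LBr$ that is open onto its range with dense range, one must show that $g$ is a cokernel, equivalently that the canonical comparison $\psi\colon\cok(\ker g)\to Z$, where $\cok(\ker g)=(Y/g^{-1}(0))^{\sharp}$, is an isomorphism. Openness makes $\ran g\cong Y/g^{-1}(0)$ a dense subspace of $Z$ carrying its intrinsic topology, so $\psi$ is the canonical injection of the mackeyfication into the Mackey-complete space $Z$ inside their common completion. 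The hard part will be promoting this injective comparison to a surjection, i.e.\ proving that a regular LB-space containing $Y/g^{-1}(0)$ as a dense, suitably embedded subspace cannot be strictly larger than its mackeyfication. I expect this to be the only nontrivial point, and I would settle it using the minimality of the mackeyfication among Mackey-complete subspaces together with the bornological rigidity of regular LB-spaces recorded in the proof of Lemma \ref{DOM-LEM}, supplemented by Dierolf's Lemma \ref{LEM-D}.
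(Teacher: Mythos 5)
The routine parts of your proposal are sound: deriving (i) and (ii) from the reflection $\mac\dashv\inc$ of Lemma \ref{DOM-LEM} is exactly the alternative route the paper itself flags (via Riehl's results on reflective subcategories), and (iii), (vi), (vii) together with the forward implications of (iv) and (v) then follow as you describe. The paper dismisses all of these as straightforward and devotes its entire written proof to the single nontrivial claim, namely the sufficiency in (iv).

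That is precisely where your proposal has a genuine gap. You correctly reduce the sufficiency of (iv) to the surjectivity of the comparison $\psi\colon(Y/g^{-1}(0))^{\sharp}\to Z$, i.e.\ to the assertion that the regular LB-space $Z$ equals the Mackey completion of its dense topological subspace $g(Y)$, but you then only announce that you ``expect'' to settle this via the minimality of the mackeyfication among Mackey-complete subspaces, supplemented by Lemma \ref{LEM-D}. Minimality of the mackeyfication yields the inclusion $(g(Y),\tau_Z)^{\sharp}\subseteq Z$, which is exactly the injectivity of $\psi$ that you already have; it says nothing about the reverse inclusion $Z\subseteq(g(Y),\tau_Z)^{\sharp}$, which is the actual content, and Lemma \ref{LEM-D} (well-locatedness, closed ranges, pushouts) does not bear on this extension problem either. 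So the one non-routine implication of the proposition is left unproven, with a plan whose principal ingredient points in the wrong direction. The paper's proof proceeds differently: it verifies the universal property $g=\cok(\ker g)$ directly by factoring a test map $h$ with $h\circ\ker g=0$ first through the $\HDLCS$-cokernel $q\colon Y\rightarrow Y/g^{-1}(0)$, then through the isomorphism $\bar g\colon Y/g^{-1}(0)\rightarrow(g(Y),\tau_Z)$ supplied by the openness hypothesis, and finally extending the resulting map from the dense subspace $g(Y)$ to all of $Z$; it is this last extension along the dense inclusion that does the work your sketch defers. As it stands, your proposal establishes everything except the implication the proposition is really about.
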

\begin{proof}(i)\hspace{1pt}--\hspace{1pt}(iii) are straightforward; (vi)\hspace{1pt}--\hspace{1pt}(vii) follow immediately; (v) follows and from (iii)\hspace{1pt}--\hspace{1pt}(iv) and in (iv) only the suffiency needs to be checked. To this end let $g\colon Y\rightarrow Z$ be open onto its range and let the range be dense. We consider the $\LBr$-kernel $i\colon g^{-1}(0)^{\flat}\rightarrow Y$ of $g$ and claim that $g=\cok i$ holds in $\LBr$. By definition we have $g\circ i=0$. Let $h\colon Y\rightarrow H$ in $\LBr$ be given with $h\circ i=0$. The map $g$ induces an isomorphism $\bar{g}\colon Y/g^{-1}(0)\rightarrow (g(Y),\uptau_Z)$ due to the assumed openness. Denote by  $q\colon Y\rightarrow Y/g^{-1}(0)$ the quotient map and by $j\colon g(Y)\rightarrow Z$ the inclusion. Then we have $g=j\circ\bar{g}\circ q$. Now we obtain, from left to right, unique dashed maps as in the following diagram
\begin{equation*}
\begin{tikzcd}[column sep=32pt,row sep=17pt]
& & Y/g^{-1}(0)\arrow{r}{\bar{g}}\arrow{r}[swap]{\sim}\arrow[swap,dashed]{dd}{}& (g(Y),\uptau_Z)\arrow{r}{j}\arrow[swap,dashed]{ddl}{} &Z\arrow[bend left=12, dashed]{ddll}{}\\[-7pt]
g^{-1}(0)^{\flat}\arrow{r}{i}\arrow[bend left=20]{urr}{0}\arrow[bend right=20, swap]{drr}{0} & Y \arrow[swap]{dr}{h}\arrow{ur}{q} & \\[-7pt]
&  & H
\end{tikzcd}
\end{equation*}
by using firstly that $q=\HDLCS\hspace{1pt}\text{-}\hspace{-1pt}\cok i$ holds, secondly that $\overline{g}$ is an isomorphism of lcs, and finally by employing that $g(Y)\subseteq Z$ is dense.
\end{proof}

Let us highlight that by the above cokernels in $\LBr$ are in general not surjective.

\begin{rmk}\label{EX-LBr-1} The parallel of a morphism $f\colon X\rightarrow Y$ in $\LBr$ is 
$$
\bar{f}\colon(X/f^{-1}(0))^{\sharp}\longrightarrow\overline{f(X)}^{\flat},
$$
i.e., the mackeyfication of the injection $X/f^{-1}(0)\rightarrow \overline{f(X)}^{\flat}$. The parallel is in general neither monic nor epic, as we now show.

\smallskip

\textcircled{1} Let first $f\colon u^{-1}(0)^{\flat}\rightarrow E$, with $u\in(H,\sigma)'\backslash(H,\tau)'$ from Example \ref{EX-GROTH}(i), be as in Remark \ref{PAR-1}. Observe that $E$ in Example \ref{EX-GROTH}(i) is regular and thus $f$ is indeed a morphism in $\LBr$. As $f$ is injective, and $u^{-1}(0)^{\flat}$ as well as $(H,\sigma)$ are regular, we get the same parallel
$$
\bar{f}\colon u^{-1}(0)^{\flat}\rightarrow (H,\sigma)
$$
in $\LBr$ as in $\LB$, cf.~Remark \ref{PAR-1}, where we already saw that $\bar{f}$ does not have a dense range.

\smallskip

\textcircled{2} It remains to give an example where the parallel is not injective. For this we adapt a method of Sieg \cite[Proof of Prop 3.1.6]{SiegThesis}: Let $U$ be a closed subspace of a regular LB-space $X$ such that $X/U$ is not regular and put $Y:=(X/U)^{\sharp}$. Let $c\colon X\rightarrow Y$ be the composition of quotient map and inclusion. We pick $y_0\in Y\backslash c(X)$ and define
$$
f\colon X\oplus\KK\rightarrow Y,\;(x,\lambda)\mapsto c(x)-\lambda y_0
$$
which is a morphism in $\LBr$ with $f^{-1}(0)=U\oplus\{0\}$ and thus has coimage $(X\oplus\KK/U\oplus\{0\})^{\sharp}=Y\oplus\KK$. As $c$ has dense range by construction, we further get that $\overline{f(X)}=Y$. The parallel of $f$ is thus given by
$$
\bar{f}\colon Y\oplus\KK\rightarrow Y,\;(y,\lambda)\mapsto y-\lambda y_0
$$
with $\bar{f}(y_0,1)=0$.\hfill\scalebox{1.4}{$\diamond$}
\end{rmk}

By the above, $\LBr$ is neither left nor right semi-abelian and has thus by \cite[Cor 1 on p.\ 169]{Rump01} kernels as well as cokernels which are not semistable. In contrast to $\LB$, the class
$$
\Callreg:=\bigl\{(f,g)\in\LBr\times\LBr\:\bigl|\:f=\ker g \text{ and } g=\cok f\bigr\}
$$
of all kernel-cokernel pairs in $\LBr$ is thus \emph{neither} a deflation-exact structure \emph{nor} an inflation-exact structure. Nevertheless, the next result characterizes the semistable cokernels of $\LBr$.

\begin{prop}\label{LBr-SUR} In $\LBr$ a morphism is a semistable cokernel iff it is surjective.
\end{prop}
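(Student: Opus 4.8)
The plan is to prove both implications directly, using the explicit description of kernels, cokernels and pullbacks in $\LBr$ from Proposition \ref{LBrKERCOK} together with the self-dual pullback form \eqref{PB-00}. Recall that, dualising Definition \ref{AN-3}, a cokernel $g\colon Y\rightarrow Z$ is a \emph{semistable cokernel} precisely when for every morphism $t\colon T\rightarrow Z$ the pullback of $g$ along $t$ exists and its projection $p_T$ onto $T$ is again a cokernel. Since $\LBr$ is preabelian, all pullbacks exist, and by Proposition \ref{LBrKERCOK}(i) the pullback of $g$ along $t$ is the $\LBr$-kernel of $[t\;\shortminus g]\colon T\oplus Y\rightarrow Z$, i.e.\ exactly $[t\;\shortminus g]^{-1}(0)^{\flat}=\{(s,y)\in T\oplus Y:t(s)=g(y)\}$ with $p_T,p_Y$ the induced projections, just as in \eqref{PB-00}; in particular this pullback coincides with the one formed in $\LB$.

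For the implication \emph{surjective $\Rightarrow$ semistable cokernel}, first note that a surjective $g$ is a cokernel by Proposition \ref{LBrKERCOK}(iv): its range is all of $Z$, hence dense, and it is open by the open mapping theorem for LB-spaces. Now fix an arbitrary $t\colon T\rightarrow Z$ and form $P=[t\;\shortminus g]^{-1}(0)^{\flat}$ as above. Given $s\in T$, surjectivity of $g$ provides $y\in Y$ with $g(y)=t(s)$, so $(s,y)\in P$ and $p_T(s,y)=s$; thus $p_T$ is surjective and therefore, again by Proposition \ref{LBrKERCOK}(iv) and the open mapping theorem, a cokernel. Hence $g$ is a semistable cokernel. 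This is the $\LBr$-analogue of the computation recorded in Remark \ref{RMK-LB-1}.

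For the converse, let $g$ be a semistable cokernel; being a cokernel, it has dense range by Proposition \ref{LBrKERCOK}(iv). Assume for contradiction that $g$ is not surjective, so that $g(Y)\subsetneq Z$ is a proper dense subspace, and pick $z_0\in Z\setminus g(Y)$. Consider the morphism $t\colon\KK\rightarrow Z$, $\lambda\mapsto\lambda z_0$, which is legitimate since $\KK$ is a one-dimensional Banach space and hence a regular LB-space. The pullback is $P=\{(\lambda,y)\in\KK\oplus Y:\lambda z_0=g(y)\}$; as $g(Y)$ is a subspace not containing $z_0$, the relation $\lambda z_0=g(y)\in g(Y)$ forces $\lambda=0$, so $p_T$ vanishes on all of $P$, i.e.\ $p_T=0\colon P\rightarrow\KK$. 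A zero map onto the nonzero space $\KK$ does not have dense range, hence is not epic and \emph{a fortiori} not a cokernel by Proposition \ref{LBrKERCOK}(vii),(iv). This contradicts $g$ being a semistable cokernel, so $g$ must be surjective.

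The only genuinely delicate point is the identification of the pullback: I must be certain that the object formed as a pullback actually lies in $\LBr$, so that the cokernel test of Proposition \ref{LBrKERCOK}(iv) is the correct one to apply. This is guaranteed by Proposition \ref{LBrKERCOK}(i), whose $\LBr$-kernel formula agrees with the $\LB$-kernel and automatically yields a regular object, since closed subspaces carrying the $\flat$-topology remain regular (cf.\ Remark \ref{Stand-RES}). Once this bookkeeping is settled, both directions reduce to the elementary surjectivity computations above; the decisive idea is the choice of the rank-one test map $t\colon\KK\rightarrow Z$ hitting a point outside $g(Y)$, which collapses $p_T$ to the zero map and thereby detects the failure of semistability.
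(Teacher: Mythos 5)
Your proof is correct and follows essentially the same route as the paper: the backward direction is the same pullback computation via \eqref{PB-00} together with Proposition \ref{LBrKERCOK}(iv), and the forward direction rests on the same rank-one test map $t\colon\KK\rightarrow Z$, $\lambda\mapsto\lambda z_0$ (the paper argues directly that the cokernel $p_{\KK}$ onto the one-dimensional space $\KK$ must be surjective, whereas you phrase it as a contradiction with $z_0\notin g(Y)$, which is the same idea). Your extra care about the pullback object lying in $\LBr$ is sound and matches the paper's implicit use of Remark \ref{Stand-RES}.
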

\begin{proof}\textquotedblleft{}$\Longrightarrow$\textquotedblright{} The implication follows from the indirect argument in \cite[Prop 2.2.3]{SiegThesis}. Alternatively, we may also argue directly: Let $g\colon Y\rightarrow Z$ be a semistable cokernel and $z_0\in Z$. Consider $t\colon\mathbb{K}\rightarrow Z$, $\lambda\mapsto\lambda z_0$, and form the $\LBr$-pullback $(P,p_{\mathbb{K}},p_{Y})$ of $g$ along $t$. Then $p_{\KK}$ is a cokernel and in view of its one-dimensional codomain, $p_{\KK}$ has to be surjective. We thus find $x\in P$ with $p_{\KK}(x)=1$ and therefore $g(p_Y(x))=z_0$.

\smallskip

\textquotedblleft{}$\Longleftarrow$\textquotedblright{} If we form a pullback of a surjective morphism $g\colon Y\rightarrow Z$ along an arbitrary morphism $t\colon T\rightarrow Z$ in $\LBr$, we get exactly the same diagram as in \eqref{PB-00}. We see that $p_T$ will be surjective and an $\LBr$-cokernel by Lemma \ref{LBrKERCOK}(iv).
\end{proof}

\begin{thm}\label{C-REG-SUR} On $\LBr$ the following defines a strongly deflation-exact conflation structure and the maximal deflation-exact structure:
$$
\Cregsur:=\bigl\{(f,g)\in\Callreg\:\big|\: g\;\text{is semistable}\hspace{0.5pt}\bigr\}= \Call\cap\LBr.
$$
\end{thm}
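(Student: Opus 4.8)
The plan is to derive the two structural assertions directly from Theorem~\ref{PROP-MAX-D} and to establish the displayed equality by an elementary double inclusion.

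First I would record that $\LBr$ is karoubian. By Proposition~\ref{LBrKERCOK} the category $\LBr$ is preabelian, so every morphism---in particular every idempotent $p$ with $p^2=p$---has a kernel, which is exactly the defining property of a karoubian category recalled before Theorem~\ref{PROP-MAX}. Hence Theorem~\ref{PROP-MAX-D} applies with $\mathcal{A}=\LBr$: the class of all kernel-cokernel pairs $(f,g)$ in $\LBr$ whose deflation $g$ is a semistable cokernel---that is, precisely $\{(f,g)\in\Callreg\mid g\text{ semistable}\}$---is strongly deflation-exact and is the maximal deflation-exact structure on $\LBr$. This already yields every claim of the theorem except the equality $\{(f,g)\in\Callreg\mid g\text{ semistable}\}=\Call\cap\LBr$.

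For that equality I would argue by double inclusion, in both directions reducing semistability of $g$ to plain surjectivity via Proposition~\ref{LBr-SUR}. For the forward inclusion, let $(f,g)\in\Callreg$ with $g$ semistable; then $g$ is surjective, and Proposition~\ref{LBrKERCOK}(v) gives that $f$ is injective with $f(X)=g^{-1}(0)$ as vector spaces. Injectivity of $f$, surjectivity of $g$ and $f(X)=g^{-1}(0)$ are exactly the conditions of Proposition~\ref{LB-PROP}(v) for $(f,g)$ to be a kernel-cokernel pair in $\LB$, so $(f,g)\in\Call$; as all three objects are regular, $(f,g)\in\Call\cap\LBr$. For the reverse inclusion, let $(f,g)\in\Call\cap\LBr$; Proposition~\ref{LB-PROP}(v) yields $f$ injective, $g$ surjective and $f(X)=g^{-1}(0)$, while $g$ is open by the open mapping theorem. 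Thus $g$ is open onto its range with dense range, so Proposition~\ref{LBrKERCOK}(v) shows $(f,g)\in\Callreg$, and surjectivity of $g$ gives semistability by Proposition~\ref{LBr-SUR}.

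The one genuine subtlety---and the step I would check most carefully---is that cokernels differ between the two categories: the $\LBr$-cokernel of $f$ carries an extra mackeyfication (Proposition~\ref{LBrKERCOK}(ii)) that is absent in $\LB$ (Proposition~\ref{LB-PROP}(ii)). This is exactly what Proposition~\ref{LBr-SUR} neutralizes, since a semistable cokernel in $\LBr$ is forced to be surjective; equivalently, once the cokernel object $Y/\overline{f(X)}\cong Z$ is itself regular, the mackeyfication is trivial and the two notions of cokernel agree. Routing both inclusions through Proposition~\ref{LBrKERCOK}(v) and Proposition~\ref{LB-PROP}(v) lets me work only with the vector-space-level conditions together with openness, which coincide in the surjective case, and so sidestep a direct comparison of the two cokernel constructions.
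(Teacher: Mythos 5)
Your proposal is correct and follows essentially the same route as the paper's (very terse) proof: apply Theorem~\ref{PROP-MAX-D} to the karoubian category $\LBr$ for the structural claims, and obtain the equality $\Cregsur=\Call\cap\LBr$ from Propositions~\ref{LBr-SUR}, \ref{LBrKERCOK}(v) and \ref{LB-PROP}(v). Your write-up merely makes explicit the details the paper leaves to the reader, including the observation that preabelian implies karoubian and the careful double inclusion via surjectivity of semistable cokernels.
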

\begin{proof} By Theorem \ref{PROP-MAX-D} the class $\Cregsur$ as defined above has the properties stated in the theorem. The equality follows from Propositions \ref{LBr-SUR}, \ref{LBrKERCOK}(v) and \ref{LB-PROP}(v). 
\end{proof}

\begin{rmk}\begin{myitemize}\setlength{\itemindent}{-10pt}\item[(i)] The conflation category $(\LBr,\Cregsur)$ does not have admissible kernels in the sense of Definition \ref{DEFLEXCAT}(iii). Indeed, if $g$ is a non-surjective cokernel, then $\ker g$ is not a $\Cregsur$-inflation.

\setlength{\itemindent}{0pt}\vspace{3pt}

\item[(ii)] That $(\LBr,\Cregsur)$ is deflation-exact and that not all kernels are inflations can alternatively be derived from the dual of \cite[Cor 4.5]{BC13}: By Lemma \ref{DOM-LEM}, $\LBr\subseteq\LB$ is a reflective subcategory of a left quasiabelian category which is not right quasiabelian.\diam{}
\end{myitemize}
\end{rmk}

Let us now look at exact structures. As $\LBr$ is preabelian, Theorem \ref{PROP-MAX} yields that the stable kernel-cokernel pairs form the maximal exact structure on $\LBr$. In order to characterize its members, we need the following four lemmas; the first is well-known, we only recall it for convenient reference later on. 

\begin{lem}\label{AN-2a}\cite[Lem 2.1]{BC13} Let $\mathcal{A}$ be an additive category. Let $f\colon X\rightarrow Y$ be a morphism which has a cokernel $c\colon Y\rightarrow Z$. Let $t\colon X\rightarrow T$ be a morphism such that the pushout of $f$ along $t$ exists. Then the pushout property gives rise to the commutative diagram
 \begin{equation*}
\begin{tikzcd}
X\arrow{d}[swap]{t}\arrow{r}{f}\commutes[\mathrm{PO}]{dr}&Y\arrow{r}{c}\arrow{d}{q_Y} & Z \arrow[equal]{d}\\[4pt]
T\arrow[swap]{r}{q_T}\arrow[rr,bend right =30,swap, "0"] &Q \arrow[dashed]{r}[swap]{c'} & Z.
\end{tikzcd}
\end{equation*}
in which $c'$ is a cokernel of $q_T$.\diam{}
\end{lem}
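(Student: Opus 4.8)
The plan is to exhibit the map $c'$ explicitly through the universal property of the pushout and then to verify directly that it satisfies the universal property of a cokernel of $q_T$. First I would construct the dashed arrow. Since $c=\cok f$ we have $cf=0$, so the pair $(c\colon Y\to Z,\,0\colon T\to Z)$ forms a cocone on the span $Y\xleftarrow{f}X\xrightarrow{t}T$, the required compatibility being precisely $cf=0=0\circ t$. The universal property of the pushout $Q$ then yields a unique morphism $c'\colon Q\to Z$ with $c'q_Y=c$ and $c'q_T=0$. This is exactly what makes the displayed diagram commute: the upper-right square commutes by $c'q_Y=c$, and the bent arrow vanishes by $c'q_T=0$.

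Next I would check that $c'$ is a cokernel of $q_T$. The relation $c'q_T=0$ is already in hand, so only the factorization property remains. Let $h\colon Q\to W$ be any morphism with $hq_T=0$. Using the pushout identity $q_Yf=q_Tt$ I compute $hq_Yf=hq_Tt=0$, so $hq_Y$ annihilates $f$; since $c=\cok f$, there is a unique $\tilde h\colon Z\to W$ with $\tilde h c=hq_Y$. The claim is that $\tilde h c'=h$, and I would establish this by the uniqueness clause of the pushout applied to the two morphisms $\tilde h c'$ and $h$ out of $Q$: they agree after precomposition with both legs, since $\tilde h c'q_Y=\tilde h c=hq_Y$ and $\tilde h c'q_T=\tilde h\circ 0=0=hq_T$, whence $\tilde h c'=h$.

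The only point requiring a little care, and the one I would flag as the main (though mild) obstacle, is the uniqueness of the factoring map $\tilde h$. Here I would use that a cokernel is in particular an epimorphism: if $\tilde h' c'=h$ as well, then precomposing with $q_Y$ gives $\tilde h' c=hq_Y=\tilde h c$, and cancelling the epimorphism $c$ forces $\tilde h'=\tilde h$. This completes the verification that $c'=\cok q_T$. The whole argument is a formal diagram chase valid in any additive category, relying only on $c$ being a cokernel (hence epic) and on the universal property of the assumed pushout.
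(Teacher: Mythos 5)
Your argument is correct and complete: the construction of $c'$ via the pushout's universal property, the factorization of any $h$ with $hq_T=0$ through $c=\cok f$, and the uniqueness via epimorphy of $c$ are exactly the standard diagram chase. The paper does not reprove this lemma but cites \cite[Lem 2.1]{BC13}, whose proof proceeds in essentially the same way, so there is nothing to add.
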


The second lemma is a slight modification of results by Dierolf \cite{DPriv} that were used to prove the explicit description of $\Emax$, cf.~Theorem \ref{LB-confl-str}. For the convenience of the reader we include a proof.

\begin{lem}\label{LEM-D}\emph{[\hspace{1pt}essentially due to B.~Dierolf]} Consider a pushout diagram
\begin{equation}\label{LEM-D-PO}
\begin{tikzcd}
X\arrow{r}{f}\arrow{d}[swap]{t}\commutes[\mathrm{PO}]{dr} & Y \arrow{d}{q_Y}\\[4pt]
T \arrow{r}[swap]{q_T} & \frac{Y\oplus T}{\overline{\operatorname{ran}[f\:\sm\hspace{.5pt}t]^{\scriptscriptstyle\operatorname{T}}}}
\end{tikzcd}
\end{equation}
in the category $\HDLCS$. Then the following holds true:\vspace{3pt}
\begin{compactitem}

\item[(i)] For any diagram \eqref{LEM-D-PO} we have: If $f$ has closed range, then $q_T$ has closed range.

\vspace{3pt}

\item[(ii)] Let $\Class\subseteq\operatorname{Ob}(\HDLCS)$ be a collection of objects containing the field $\KK$. Let $f\colon X\rightarrow Y$ be a morphism in $\HDLCS$ that is injective and has a closed range. Identify, as linear spaces, $X$ with $f(X)\subseteq Y$. Then the following are equivalent:
\begin{compactitem}\vspace{2pt}
\item[(a)] We have $(X,\tau_X)'=(X,\tau_Y)'$.\vspace{3pt}
\item[(b)] For any $t\colon X\rightarrow T$ with $T\in\Class$ the map $t\colon(X,\tau_Y)\rightarrow T$ has a closed graph.\vspace{3pt}
\item[(c)] For any $t\colon X\rightarrow T$ with $T\in\Class$ the map $[f\:\sm{}\hspace{-1.5pt}t]^{\scriptscriptstyle\operatorname{T}}\colon X\rightarrow Y\oplus T$ has a closed range.\vspace{3pt}
\item[(d)] For any $t\colon X\rightarrow T$ with $T\in\Class$, the map $q_T$ in \eqref{LEM-D-PO} is injective.
\end{compactitem}
\end{compactitem}
\end{lem}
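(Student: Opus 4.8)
The plan is to dispatch~(i) by a direct computation at the level of the defining quotient map, and then to prove the four-fold equivalence in~(ii) through the cycle (a)$\Rightarrow$(c)$\Leftrightarrow$(d)$\Rightarrow$(a), supplemented by (b)$\Leftrightarrow$(c). Throughout I abbreviate $R:=\operatorname{ran}[f\,\sm t]^{\operatorname{T}}$, and I write $\pi\colon Y\oplus T\to Q:=(Y\oplus T)/\overline R$ for the quotient map and $P_Y\colon Y\oplus T\to Y$ for the first projection.

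For~(i) I would note that $q_T(s)=\pi(0,s)$, so $\pi^{-1}(\operatorname{ran} q_T)=(\{0\}\oplus T)+\overline R$; letting the free second coordinate absorb $T$ gives $\pi^{-1}(\operatorname{ran} q_T)=P_Y(\overline R)\oplus T$. Since $P_Y$ is continuous and $P_Y(R)=f(X)$, one has $f(X)\subseteq P_Y(\overline R)\subseteq\overline{f(X)}$; thus if $f$ has closed range then $P_Y(\overline R)=f(X)$ is closed, so $\pi^{-1}(\operatorname{ran} q_T)=f(X)\oplus T$ is closed, and since $\pi$ is a quotient map, $\operatorname{ran} q_T$ is closed in $Q$.

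For~(ii) I would first record three facts used repeatedly. As $f$ is injective, $R\cap(\{0\}\oplus T)=\{0\}$ and $R$ is the graph of the possibly discontinuous map $-t$ viewed on $f(X)\cong(X,\tau_Y)$. As $f$ has closed range, $P_Y(\overline R)=f(X)$ as in~(i), so every point of $\overline R$ has first coordinate in $f(X)$. Finally, by Hahn--Banach $\{\eta\circ f\mid\eta\in Y'\}=(X,\tau_Y)'$, the $\tau_Y$-continuous functionals on $X$. Granting these, (c)$\Leftrightarrow$(d) is elementary: $q_T$ is injective iff $\overline R\cap(\{0\}\oplus T)=\{0\}$, which for a \emph{closed} $R$ is immediate; conversely, for $(y,s)\in\overline R$ one writes $y=f(x_0)$ and observes $(0,s+t(x_0))\in\overline R$, so the intersection condition forces $(y,s)\in R$. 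Likewise (b)$\Leftrightarrow$(c): the graph of $t$ is the image of $R$ under the homeomorphism $(y,s)\mapsto(y,-s)$ and lies in $f(X)\oplus T$, so, since $f(X)$ is closed in $Y$, a subset of $f(X)\oplus T$ is closed in $Y\oplus T$ iff it is closed in $(X,\tau_Y)\times T$.

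The analytic core is (a)$\Rightarrow$(c) and (d)$\Rightarrow$(a), both through the bipolar identity $\overline R=R^{\perp\perp}$ with $R^\perp=\{(\eta,\zeta)\in Y'\times T'\mid\eta\circ f=\zeta\circ t\}$. For (a)$\Rightarrow$(c), a point $(y,s)\in\overline R$ gives $y=f(x_0)$ and then $(0,s+t(x_0))\in\overline R$, i.e.\ $\zeta(s+t(x_0))=0$ for every $\zeta\in T'$ with $\zeta\circ t\in(X,\tau_Y)'$; under~(a) this holds for \emph{all} $\zeta\in T'$, and since $T'$ separates points we get $s=-t(x_0)$, so $R$ is closed. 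For (d)$\Rightarrow$(a) I would use $\KK\in\Class$: take any $t\in(X,\tau_X)'$ and apply~(d) with $T=\KK$. Were $t$ not $\tau_Y$-continuous, then $\zeta t\in(X,\tau_Y)'$ would fail for every $\zeta\neq0$, forcing $R^\perp\subseteq Y'\times\{0\}$ and hence $(0,c)\in\overline R$ for all $c\in\KK$, which contradicts injectivity of $q_T$; thus $t\in(X,\tau_Y)'$, and as $\tau_Y\subseteq\tau_X$ always gives the reverse inclusion we obtain~(a). I expect these two bipolar arguments, and in particular the use of $\KK\in\Class$ to manufacture a non-injective $q_T$ from a non-extendable functional, to be the crux; the remaining implications are bookkeeping once $f(X)$ is known to be closed.
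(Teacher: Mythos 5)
Your proof is correct, but it takes a genuinely different route from the paper's for the heart of part (ii). For (i) the two arguments are essentially the same computation: the paper factors through the induced continuous map $\varphi\colon Q\rightarrow Y/f(X)$ and identifies $\ran q_T=\varphi^{-1}(0)$, while you pull $\ran q_T$ back along the quotient map and identify the preimage as $f(X)\oplus T$; both hinge on $P_Y(\overline{\operatorname{ran}[f\:\sm{}t]^{\operatorname{T}}})=f(X)$. For (ii) the paper runs the cycle (a)$\Rightarrow$(b)$\Rightarrow$(c)$\Rightarrow$(d)$\Rightarrow$(b) and (b)$\Rightarrow$(a) entirely with nets, the last implication via an explicit density argument producing a net violating the closed-graph property. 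You instead make the duality the engine: computing $R^{\perp}=\{(\eta,\zeta)\mid\eta\circ f=\zeta\circ t\}$ and invoking the bipolar theorem $\overline R=R^{\perp\perp}$ turns (a)$\Rightarrow$(c) and (d)$\Rightarrow$(a) into short separation arguments, with Hahn--Banach identifying $\{\eta\circ f\mid\eta\in Y'\}$ with $(X,\tau_Y)'$; and your (b)$\Leftrightarrow$(c) via the homeomorphism $(y,s)\mapsto(y,-s)$ of $Y\oplus T$ restricted to the closed subspace $f(X)\oplus T$ is cleaner than the paper's two net computations. Your approach makes it transparent exactly where hypothesis (a) and the assumption $\KK\in\Class$ enter (through the computation of $R^{\perp}$), at the cost of invoking the bipolar theorem where the paper stays elementary; both approaches use the closed-range hypothesis in the same place, namely to write the first coordinate of any point of $\overline R$ as $f(x_0)$.
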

\begin{proof} (i) Since $f(X)\subseteq Y$ is closed, $Y/f(X)$ is Hausdorff. Let $Y\oplus T\rightarrow Y/f(X)$ be the composition of projection and quotient map. The kernel of the latter is $f(X)\oplus T$, in which $\overline{\operatorname{ran}[f\:\sm\hspace{-1.5pt}t]^{\scriptscriptstyle\operatorname{T}}}$ is contained due to the closedness of $f(X)$. Therefore the induced map
$$
\varphi\colon\zfrac{Y\oplus T}{\overline{\operatorname{ran}[f\:\sm\hspace{-1.5pt}t]^{\scriptscriptstyle\operatorname{T}}}}\longrightarrow \zfrac{Y}{f(X)},\;(y,s)+\overline{\operatorname{ran}[f\:\sm\hspace{-1.5pt}t]^{\scriptscriptstyle\operatorname{T}}}\,\mapsto \,y+f(X)
$$
is well-defined, linear and continuous. We see $\varphi\circ q_T=0$ and thus $q_T(T)\subseteq\varphi^{-1}(0)$. Let now $(y,s)+\overline{\operatorname{ran}[f\:\sm\hspace{-1.5pt}t]^{\scriptscriptstyle\operatorname{T}}}\in\varphi^{-1}(0)$ be given. Then $0=\varphi((y,s)+\overline{\operatorname{ran}[f\:\sm\hspace{-1.5pt}t]^{\scriptscriptstyle\operatorname{T}}})=y+f(X)$. We thus find $x\in X$ such that $f(x)=y$ and calculate
\begin{eqnarray*}
(y,s)+\overline{\operatorname{ran}[f\:\sm\hspace{-1.5pt}t]^{\scriptscriptstyle\operatorname{T}}} &=& (y,s)-(f(x),-t(x))+\overline{\operatorname{ran}[f\:\sm\hspace{-1.5pt}t]^{\scriptscriptstyle\operatorname{T}}}\\
&=&(0,s+t(x))+\overline{\operatorname{ran}[f\:\sm\hspace{-1.5pt}t]^{\scriptscriptstyle\operatorname{T}}}
\end{eqnarray*}
which belongs to $q_T(T)$. Therefore, $q_T(T)=\varphi^{-1}(0)\subseteq\frac{Y\oplus T}{\overline{\operatorname{ran}[f\:\sm\hspace{-.5pt}t]^{\scriptscriptstyle\operatorname{T}}}}$ is closed.

\smallskip

(ii) Recall first that $(X,\tau_X)'=(X,\tau_Y)'$ is equivalent to the equality $\sigma(X,(X,\tau_X)')=\sigma(X,(X,\tau_Y)')$ and thus to $(X,\tau_X)\rightarrow(Y,\tau_Y)$ being a weak isomorphism onto its range, see, e.g.~\cite[Chapter 23]{MV}. Now we prove the implications.

\smallskip

\textquotedblleft{}(a)$\Longrightarrow$(b)\textquotedblright{} As previously mentioned we have $\sigma(X,(X,\tau_X)')=\sigma(X,(X,\tau_Y)')=:\sigma(X,X')$. As $t\colon (X,\tau_X)\rightarrow T$ is continuous, $t\colon(X,\sigma(X,X'))\rightarrow (T,\sigma(T,T'))$ is continuous as well. In particular, $\gr(t)\subseteq (X,\sigma(X,X'))\times(T,\sigma(T,T'))$ is closed. By our assumption in (a) the identity
$$
\id_{X\times T}\colon (X,\tau_Y)\times T\rightarrow (X,\sigma(X,X'))\times(T,\sigma(T,T'))
$$
is continuous and thus  $\gr(t)\subseteq (X,\tau_Y)\times T$ is closed, too.

\smallskip

\textquotedblleft{}(b)$\Longrightarrow$(c)\textquotedblright{} Let $(x_{\alpha})_{\alpha\in A}\subseteq X$ be a net such that $f(x_{\alpha})\rightarrow y\in Y$ and $-t(x_{\alpha})\rightarrow s\in T$ both hold. As $f(X)\subseteq Y$ is closed there exists $x\in X$ with $f(x)=y$. Through our identification of $X$ and $f(X)$ as linear spaces, the former means $x_{\alpha}\rightarrow x$ in $(X,\tau_Y)$. As $-t\colon(X,\tau_Y)\rightarrow T$ has closed graph by (b), we obtain $-h(x)=s$. Consequently,
$$
\bigl[\begin{smallmatrix}\phantom{\sm}f\\\sm{}t\end{smallmatrix}\bigr](x_{\alpha})=\bigl[\begin{smallmatrix}\phantom{\sm}f(x_{\alpha})\\\sm{}t(x_{\alpha})\end{smallmatrix}\bigr]\longrightarrow\bigl[\begin{smallmatrix}y\\s\end{smallmatrix}\bigr]=\bigl[\begin{smallmatrix}\phantom{\sm}f\\\sm{}t\end{smallmatrix}\bigr](x),
$$ 
which shows that $[f\:\sm{}\hspace{-1.5pt}t]^{\scriptscriptstyle\operatorname{T}}(X)\subseteq Y\oplus T$ is closed.

\smallskip

\textquotedblleft{}(c)$\Longrightarrow$(d)\textquotedblright{} Let $s\in T$ be given with $q_T(s)=0$. By (c) this means $(0,s)\in\operatorname{ran}[f\:\sm\hspace{-1.5pt}t]^{\scriptscriptstyle\operatorname{T}}$. Thus there exists $x\in X$ with $f(x)=0$ and $-t(x)=s$. Since $f$ is injective we get $x=0$ and thus $s=-t(0)=0$.

\smallskip

\textquotedblleft{}(d)$\Longrightarrow$(b)\textquotedblright{} Let $(x_{\alpha})_{\alpha\in A}\subseteq X$ be a net such that $x_{\alpha}\rightarrow0$ in $(X,\tau_Y)$ and $t(x_{\alpha})\rightarrow s$ in $T$ hold. Through our identification of $X$ with $f(X)$ the former means $f(x_{\alpha})\rightarrow 0$ in $Y$. Consequently
$$
\bigl[\begin{smallmatrix}\phantom{\sm}f\\\sm{}t\end{smallmatrix}\bigr](x_{\alpha})=\bigl[\begin{smallmatrix}\phantom{\sm}f(x_{\alpha})\\\sm{}t(x_{\alpha})\end{smallmatrix}\bigr]\longrightarrow \bigl[\begin{smallmatrix}0\\s\end{smallmatrix}\bigr]
$$
and thus $(0,s)\in\overline{\operatorname{ran}[f\:\sm\hspace{-1.5pt}t]^{\scriptscriptstyle\operatorname{T}}}$. But then $q_T(s)=(0,s)+\overline{\operatorname{ran}[f\:\sm\hspace{-1.5pt}t]^{\scriptscriptstyle\operatorname{T}}}=0$ and $s=0$ by (d).

\smallskip

\textquotedblleft{}(b)$\Longrightarrow$(a)\textquotedblright{} Assume $(X,\tau_X)'\not=(X,\tau_Y)'$ and pick $\varphi\in(X,\tau_X)'\backslash(X,\tau_Y)'$. Since $\KK\in\Class$ and $\varphi\colon(X,\tau_X)\rightarrow\KK$ is linear and continuous, (b) implies that $\varphi\colon(X,\tau_Y)\rightarrow\KK$ has a closed graph. To get a contradiction, observe first that $\varphi$ cannot be the zero functional and that thus there exists $x\in X$ with $\varphi(x)=1$. Observe next that, as $\varphi$ is not $\tau_Y$-continuous, $\varphi^{-1}(0)\subseteq X$ is $\tau_Y$-dense. Therefore, we find a net $(x_{\alpha})_{\alpha\in A}\subseteq X$ such that $\varphi(x_{\alpha})=0$ for all $\alpha\in A$ and $x_{\alpha}\rightarrow x$ with respect to $\tau_Y$. But then
$$
(x_{\alpha},\varphi(x_{\alpha}))\rightarrow (x,0) \text{ in } (X,\tau_Y)\times\KK
$$
while $\varphi(x)\not=0$.
\end{proof}

\begin{lem}\label{LEM-PO-REG} Let $X\stackrel{f}{\rightarrow}Y\stackrel{g}{\rightarrow}Z$ be an $\LBr$-stable kernel-cokernel pair. Let $t\colon X\rightarrow T$ be an arbitrary morphism in $\LBr$. Then the space
$Q:=(Y\oplus T)/\overline{\operatorname{ran}[f\sm\hspace{-2pt}t]^{\scriptscriptstyle\operatorname{T}}}$ is regular.
\end{lem}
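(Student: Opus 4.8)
The plan is to reduce the statement to the Mackey completeness of $Q$ and then prove the latter by a three-space argument that exploits the way $Q$ is sandwiched between $T$ and $Z$. Since an LB-space is regular if and only if it is Mackey complete (\cite[Lem 7.3.3]{BPC}), it suffices to show that every Mackey-Cauchy sequence in $Q$ converges in $Q$.

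First I would extract the structure of the pushout. As $g$ is a semistable cokernel in $\LBr$, Proposition~\ref{LBr-SUR} makes $g$ surjective, hence a cokernel in $\LB$ as well. As $f$ is a semistable kernel in $\LBr$, the map $q_T$ in \eqref{LEM-D-PO} is injective for every $t\colon X\to T$ in $\LBr$ (the $\LBr$-pushout is the mackeyfication of $Q$, and $T\to Q\hookrightarrow Q^{\sharp}$ is an $\LBr$-kernel, hence injective). By the implication (d)$\Rightarrow$(a) of Lemma~\ref{LEM-D}(ii), applied with $\Class=\LBr$, this forces $f$ to be a weak isomorphism onto its range, i.e.\ $\operatorname{ran} f$ is well-located; thus $(f,g)\in\Emax$. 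For our fixed $t$, (a)$\Rightarrow$(c) of Lemma~\ref{LEM-D}(ii) shows that $W:=\operatorname{ran}[f\sm t]^{\operatorname{T}}$ is already closed, and Lemma~\ref{LEM-D}(i) gives that $q_T$ has closed range; together with injectivity, $q_T$ is an injective $\LB$-kernel. Pushing out the $\Emax$-inflation $f$ along $t$ inside the exact category $(\LB,\Emax)$ (Theorem~\ref{LB-confl-str}) makes $q_T$ an $\Emax$-inflation, so $\operatorname{ran} q_T$ is well-located in $Q$. Finally, Lemma~\ref{AN-2a} yields a surjection $c'\colon Q\to Z$ with $c'\circ q_Y=g$, $c'\circ q_T=0$ and $c'=\cok q_T$, so that $T\xrightarrow{q_T}Q\xrightarrow{c'}Z$ is a kernel-cokernel pair in $\LB$ whose two ends are regular.

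Next I would lift Mackey-Cauchy data through $c'$. Let $(x_n)$ be Mackey-Cauchy in $Q$. Then $(c'(x_n))$ is Mackey-Cauchy in $Z$, and since $Z$ is regular it Mackey-converges to some $\bar z$ inside a bounded disk of a step $Z_m$. Because $g$ is an open surjection and $Y=\ind_m Y_m$, we have $Z=\ind_m g(Y_m)$ with each $g(Y_m)$ a Banach-space quotient of the step $Y_m$; hence the open mapping theorem for Banach spaces provides norm-controlled lifts: there are $y\in Y_m$ with $g(y)=\bar z$ and $u_n\in Y_m$ with $g(u_n)=c'(x_n)-\bar z$ and $\|u_n\|_{Y_m}\to0$. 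Putting $z_0:=q_Y(y)$ (so $c'(z_0)=\bar z$) and noting $q_Y(u_n)\to0$ Mackey in $Q$, the sequence $y_n:=x_n-z_0-q_Y(u_n)$ satisfies $c'(y_n)=0$, so it lies in the closed subspace $\operatorname{ran} q_T$.

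Finally I would transfer the tail back to $T$. The sequence $(y_n)$ is Mackey-Cauchy in $Q$, say via a bounded disk $\tilde B$, and lies in the linear subspace $\operatorname{ran} q_T$; since $\varepsilon(\tilde B\cap\operatorname{ran} q_T)=\varepsilon\tilde B\cap\operatorname{ran} q_T$, it is in fact Mackey-Cauchy for the disk $\tilde B\cap\operatorname{ran} q_T$. As $\operatorname{ran} q_T$ is well-located, this disk is bounded also for the $\flat$-topology carried by $T$, so $(y_n)$ is Mackey-Cauchy in the regular space $T$ and therefore converges there; transporting along the continuous map $q_T$ gives $y_n\to y_*$ in $Q$, whence $x_n\to y_*+z_0$ in $Q$. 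This shows $Q$ is Mackey complete, hence regular.

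The main obstacle is the interaction of the two transfer steps with the extension structure. The lift works only because $Z$ is realised through the regular step spaces $Y_m$, giving Banach-level norm control over preimages; and the pull-back into $T$ works only because $\operatorname{ran} q_T$ is well-located, which is precisely the information supplied by stability of the pair (through $\Emax$). Without stability, either $q_T$ fails to be injective or $\operatorname{ran} q_T$ fails to be well-located, and the argument collapses — consistent with the fact that regularity does not survive arbitrary $\LB$-cokernels (Remark~\ref{Stand-RES}).
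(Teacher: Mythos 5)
Your proposal is correct, but it takes a genuinely different route from the paper. The paper's own proof never leaves the pushout: it forms both the $\LB$-pushout $(Q,q_Y,q_T)$ and the $\LBr$-pushout $(Q^{\sharp},i\circ q_Y,i\circ q_T)$, uses Lemma~\ref{AN-2a} to produce cokernel maps $c\colon Q\rightarrow Z$ and $c^{\sharp}\colon Q^{\sharp}\rightarrow Z$, reads off from the semistability of $f$ that the row $T\rightarrow Q^{\sharp}\rightarrow Z$ is linearly exact, checks the same for $T\rightarrow Q\rightarrow Z$, and concludes via the 5-lemma in vector spaces that $i\colon Q\rightarrow Q^{\sharp}$ is bijective, so that $Q=Q^{\sharp}$ is Mackey complete and hence regular. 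You instead first extract from semistability that $\ran f\subseteq Y$ is well-located --- via the injectivity of $i\circ q_T$, hence of $q_T$, and Lemma~\ref{LEM-D}(ii)(d)$\Rightarrow$(a) --- which the paper only establishes afterwards in Theorem~\ref{REG-EMB-EX}; your observation that injectivity of $q_T$ follows from injectivity of $i\circ q_T$ without knowing $Q=Q^{\sharp}$ is a small but genuine simplification of that later step. You then place $Q$ in the algebraically exact sequence $T\xrightarrow{q_T}Q\xrightarrow{c'}Z$ with well-located kernel and regular ends and prove the three-space property for regularity by hand with Mackey--Cauchy sequences and Banach-level lifts. That second half is essentially a sequential re-proof of Lemma~\ref{LEM-W}, which is proved in the paper independently of Lemma~\ref{LEM-PO-REG}; you could have cited it and saved your last two paragraphs. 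The trade-off: the paper's argument is shorter and purely categorical once the two pushouts are on the table, while yours exposes the functional-analytic mechanism and shows that Lemma~\ref{LEM-PO-REG} is really an instance of the three-space lemma once the well-locatedness of $\ran f$ has been isolated. All the individual steps you use (surjectivity of $g$, closedness of $\ran q_T$ from Lemma~\ref{LEM-D}(i), well-locatedness of $\ran q_T$ from the pushout axiom in $(\LB,\Emax)$, the norm-controlled lifts through the Banach quotients $Y_m\rightarrow g(Y_m)$, and the transfer of the bounded disk $\tilde B\cap\ran q_T$ to $T$ via Mackey's theorem) check out.
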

\begin{proof} By Propositions \ref{LBrKERCOK}(v) and \ref{LBr-SUR} the given pair $(f,g)$ is a kernel-cokernel pair in $\LB$. We form first the pushout of $f$ along $t$ in $\LB$, which is the $\LB$-cokernel of the map $[f\:\sm\hspace{-.5pt}t]^{\scriptscriptstyle\operatorname{T}}\colon X\rightarrow Y\oplus T$, i.e., precisely the space $Q$ in the lemma, together with the natural maps $q_Y$ and $q_T$. Next we use the pushout property in $\LB$ to obtain a map $c\colon (Y\oplus T)/\overline{\operatorname{ran}[f\sm\hspace{-2pt}t]^{\scriptscriptstyle\operatorname{T}}}\rightarrow Z$ with $c\circ q_T=0$ and $c\circ q_Y=g$. By Lemma \ref{AN-2a} we have $c=\cok q_T$ in $\LBr$. This yields the two upper squares of the following diagram:
\begin{equation}\label{DIAG}
\begin{tikzcd}
X\arrow{r}{f}\arrow{d}[swap]{t} & Y \arrow{d}{q_Y}\arrow{r}{g} &Z\arrow[equal]{d}{}\\[0.7em]
T \arrow{r}{q_T}\arrow[equal]{d}{} & \frac{Y\oplus T}{\overline{\operatorname{ran}[f\:\sm\hspace{0.5pt}t]^{\operatorname{T}}}}\arrow{d}{i}\arrow[dashed]{r}{c}&  Z\arrow[equal]{d}{}\\
T \arrow{r}[]{i\circ q_T}&\bigl(\frac{Y\oplus T}{\overline{\operatorname{ran}[f\:\sm\hspace{.5pt}t]^{\operatorname{T}}}}\bigr)^{\!\sharp}\arrow[dashed]{r}[]{c^{\sharp}} & Z.
\end{tikzcd}
\end{equation}
Next we form the pushout of $f$ along $t$ in $\LBr$, which is the cokernel of $[f\:\sm\hspace{-.5pt}t]^{\scriptscriptstyle\operatorname{T}}$ in $\LBr$ and thus by Proposition \ref{LBrKERCOK}(ii) the  triple $(Q^{\sharp},i\circ q_Y,i\circ q_T)$. Here, $i\colon Q\rightarrow Q^{\sharp}$ denotes the inclusion into the mackeyfication of $Q$. We thus obtain the left lower square in \eqref{DIAG} and observe that the left half of the diagram is a pushout in $\LBr$. Let finally  $c^{\sharp}$ be the mackeyfication of $c$. This makes the right lower square of \eqref{DIAG} commutative, which in turn implies  $c^{\sharp}\circ i\circ q_T =0$ and $ c^{\sharp}\circ i \circ q_Y=g$. This means that  $c^{\sharp}$ is precisely the map that arises from the left half of the diagram being an $\LBr$-pushout and consequently $c^{\sharp}=\cok(i\circ q_T)$ in $\LBr$, again by Lemma \ref{AN-2a}. Since $g$ is surjective by assumption, the same is true for $c$ and $c^{\sharp}$. 

\smallskip

By assumption, $f$ is $\LBr$-semistable. This implies that $i\circ q_T$ is a kernel in $\LBr$, while $c^{\sharp}$ is its $\LBr$-cokernel and is additionally surjective. The bottom row of \eqref{DIAG} is thus in particular an exact sequence of vector spaces. For the middle row the same holds: $c$ is surjective as noted above and $q_T$ is injective since $i\circ q_T$ is injective. As $c=\cok q_T$ holds in $\LB$, we have $c\circ q_T=0$. Finally, for $r\in c^{-1}(0)$ it follows $0=c(r)=c^{\sharp}(i(r))$. In view of the algebraic exactness of the bottom row, there exists $s\in T$ such that $(i\circ q_T)(s)=i(r)$. Since $i$ is injective, this implies $q_T(s)=r$ which establishes $q_T(T)=c^{-1}(0)$. Using the 5-Lemma in the category of vector spaces we get that $i$ is a bijection, whence an isomorphism of LB-spaces (either by the open mapping theorem or as the inclusion of a space into its mackeyfication is open onto its range).
\end{proof}

\begin{lem}\emph{[\hspace{1.5pt}joint work with J.~Wengenroth]}\label{LEM-W} Let $X\stackrel{f}{\rightarrow}Y\stackrel{g}{\rightarrow}Z$ be an algebraically exact sequence in $\LB$. Assume that $f$ is a weak isomorphism onto its range. If $X$ and $Z$ are regular, then so is $Y$.
\end{lem}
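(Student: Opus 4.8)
The plan is to reduce everything to Mackey completeness and then run a three-space argument whose subtlety is absorbed by the hypothesis that $f$ is a weak isomorphism onto its range. Recall that an LB-space is regular if and only if it is Mackey complete, see \cite[Lem 7.3.3]{BPC}; it thus suffices to prove that $Y$ is Mackey complete. Identifying $X$ with the closed subspace $f(X)=g^{-1}(0)\subseteq Y$, the first---and conceptually decisive---step is to observe that $f(X)$ is Mackey complete \emph{in the subspace topology} $\tau_Y$. This is where the weak-isomorphism hypothesis enters. Mackey completeness is an invariant of the dual pair: both the notion of a Mackey-Cauchy sequence and that of Mackey convergence are formulated purely in terms of the bounded disks of the space, and by Mackey's theorem the bounded disks depend only on the dual pair. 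Since $f$ being a weak isomorphism onto its range means exactly $(X,\tau_X)'=(X,\tau_Y)'$ (cf.\ the reformulation at the start of the proof of Lemma \ref{LEM-D}(ii)), the spaces $(X,\tau_X)$ and $(f(X),\tau_Y)$ share their dual pair, hence their bounded disks and their Mackey-Cauchy resp.\ Mackey-convergent sequences; as $(X,\tau_X)$ is Mackey complete by assumption, so is $(f(X),\tau_Y)$.

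Since $g$ is surjective it is open by the open mapping theorem, so $Z\cong Y/f(X)$ topologically and we are left with a topologically exact sequence $0\to f(X)\to Y\xrightarrow{g}Z\to0$ whose outer terms are Mackey complete. I would now test Mackey completeness of $Y$ on a Mackey-Cauchy sequence $(y_k)$. Its image $(g(y_k))$ is Mackey-Cauchy in $Z$ and hence, $Z$ being Mackey complete, converges Mackey to some $z=g(\hat y)$; subtracting $\hat y$ we may assume $g(y_k)\to0$ Mackey in $Z$. At this point regularity of $Z$ is used a second time: the bounded disk witnessing this Mackey-null convergence lies in a single Banach step $Z_m$, so in fact $\|g(y_k)\|_{Z_m}\to0$, i.e.\ $g(y_k)\to0$ in the norm of the Banach space $Z_m$.

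The heart of the proof is to lift this norm-null sequence to a Mackey-null sequence in $Y$. I would form the pullback $P$ of $g$ along the inclusion $Z_m\hookrightarrow Z$ inside $\LB$; by Remark \ref{RMK-LB-1} the pullback of the cokernel $g$ along an arbitrary morphism is again a cokernel, so the projection $q\colon P\to Z_m$ is a surjection of LB-spaces onto the \emph{Banach} space $Z_m$. Because $P$ is an LB-space it carries a fundamental sequence of bounded disks $(D_j)_j$, and the sets $q(D_j)$ are increasing bounded disks with union $Z_m$; applying the Baire category theorem in the \emph{complete} space $Z_m$ together with the usual iteration yields a single bounded disk $D\subseteq P$ with $q(D)\supseteq\delta B_{Z_m}$ for some $\delta>0$. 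Pulling each $g(y_k)$ back through $q$ into $\frac{\|g(y_k)\|_{Z_m}}{\delta}\hspace{1pt}D$ and pushing forward to $Y$ produces $u_k\in Y$ with $g(u_k)=g(y_k)$ and $u_k\to0$ Mackey in $Y$. Then $x_k:=y_k-u_k$ lies in $f(X)$ and is Mackey-Cauchy in $Y$, hence Mackey-Cauchy in the subspace $(f(X),\tau_Y)$, since a Mackey-Cauchy sequence taking values in a subspace is Mackey-Cauchy for the witnessing disk intersected with that subspace. By the first step it converges Mackey in $f(X)$, and therefore $y_k=x_k+u_k$ converges Mackey in $Y$.

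I expect the genuine obstacle to be precisely this lifting of bounded sets through $q\colon P\to Z_m$: it fails for arbitrary surjections of LB-spaces, and it is only the reduction to a single \emph{Banach} step---made possible by the regularity of $Z$---that licenses the Baire category argument. One must also take some care that the chosen lifts $u_k$ are genuinely Mackey-null (uniform control by the one bounded disk $D$, with scalars $\|g(y_k)\|_{Z_m}/\delta\to0$) rather than merely topologically null, and that the Mackey-Cauchy property descends correctly from $Y$ to the subspace $f(X)$; both are routine once the disk $D$ is in hand.
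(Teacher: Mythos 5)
Your proof is correct, but it runs along a genuinely different track from the paper's. The paper works with the standard resolutions $\bigoplus_n X_n\to X$ (and likewise for $Y$, $Z$), characterizes regularity as the quotient map $\sigma$ lifting bounded sets, and then performs a diagram chase: given a bounded $B\subseteq Y$, it corrects $B$ by a lift of $g(B)$, observes that the remainder lands in $f(X)$, and uses the weak-isomorphism hypothesis exactly once---to conclude that $f^{-1}(B-\sigma_Y(C))$ is (weakly, hence actually) bounded in $X$ and can therefore be lifted through $\sigma_X$. You instead translate regularity into Mackey completeness and run a three-space argument on a Mackey--Cauchy sequence, using the weak-isomorphism hypothesis to transfer Mackey completeness from $(X,\tau_X)$ to $(f(X),\tau_Y)$ via the dual-pair invariance of local completeness; the regularity of $Z$ then localizes the quotient data to a single Banach step, where Baire category produces the lift. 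Both arguments isolate the same two uses of the hypotheses (regularity of $Z$ to localize, the weak isomorphism to pull information back along $f$), but yours avoids the resolution machinery at the cost of invoking the duality invariance of local completeness and an open-mapping/lifting argument. One point deserves more care than you give it: in the Baire step you should not run "the usual iteration" over an arbitrary fundamental sequence of bounded disks $(D_j)$ of the DF-space $P$, because the iteration converges only if the chosen disk is a Banach (completing) disk, and this is not known since the regularity of $P$ is precisely what is in doubt. The fix is standard: cover $Z_m$ by the images of the Banach steps $P_j$ of $P$ (whose unit balls \emph{are} Banach disks), apply Baire in $Z_m$ to find a step with non-meager image, and conclude from the open mapping theorem that $q|_{P_j}\colon P_j\rightarrow Z_m$ is already surjective and open; this yields the bounded disk $D=B_{P_j}$ with $q(D)\supseteq\delta B_{Z_m}$ that your argument needs. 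With that adjustment the proof goes through.
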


\begin{proof} Let $X=\ind_{n\in\NN}X_n$, $Y=\ind_{n\in\NN}Y_n$ and $Z=\ind_{n\in\NN}Z_n$ be given by sequences with inclusions as structure maps and form for each the standard resolution \eqref{RES}. As $X$ and $Z$ are regular, the quotient maps $\sigma_X$ and $\sigma_Z$ lift bounded sets.

\smallskip

Since $g$ is surjective, we have $\ind_{n\in\NN}g(Y_n)\cong Z$ where $g(Y_n)$ carries the Banach space topology from $Y_n/(g^{-1}(0)\cap Y_n)$ and thus we may w.l.o.g.\ assume that $g$ restricts to surjective maps $g_n\colon Y_n\rightarrow Z_n$ for all $n\in\NN$. Moreover, $\ind_{n\in\NN}g^{-1}(0)\cap Y_n\cong X$ from whence we may additionally assume w.l.o.g.\ that $f$ is the inclusion of a linear subspace and that $X_n=f^{-1}(0)\cap Y_n$ holds. By summing up the algebraically (and even topologically) exact sequences
$$
X_n\stackrel{f}{\longrightarrow}Y_n\stackrel{g}{\longrightarrow}Z_n
$$ 
of Banach spaces, we obtain the following commutative diagram, cf.~\cite[Prop 1.2]{Vogt92}, in which the top row is algebraically (and even topologically) exact\,---\,in particular the map $G$ below lifts bounded sets:
\begin{equation}\label{DIAG-VOGT}
\begin{tikzcd}
\Bigosum{n\in\mathbb{N}}{}X_n\arrow[r,"F"]\arrow[d,swap,"\sigma_X"] & \Bigosum{n\in\mathbb{N}}{}Y_n\arrow[r,"G"]\arrow[d,"\sigma_Y"] & \Bigosum{n\in\mathbb{N}}{}Z_n\arrow[d,"\sigma_Z"]\\
X\arrow[r,swap,"f"] & Y\arrow[r,swap,"g"] & Z.
\end{tikzcd}
\end{equation}
We show that $\sigma_Y$ lifts bounded sets: Let $B\subseteq Y$ be bounded. As $\sigma_X$ and $G$ lift bounded sets, there is a bounded set $C\subseteq\bigoplus_{n\in\NN}Y_n$ with $g(B)\subseteq(\sigma_Z\circ G)(C)$. Then $D:=B-\sigma_Y(C)$ is bounded in $Y$. Since in any lcs bounded and weakly bounded sets coincide, and since $f$ is a weak isomorphism onto its range, $f^{-1}(D)$ is weakly bounded and hence bounded in $X$. As $\sigma_X$ lifts bounded sets, there exists a bounded $E\subseteq\bigoplus_{n\in\NN}X_n$ with $f^{-1}(D)\subseteq\sigma_X(E)$. Then $F(E)$ and hence $K:=C+F(E)$ is bounded in $\bigoplus_{n\in\NN}Y_n$ and we claim that $B\subseteq \sigma_Y(K)$ holds. For $b\in B$ there is $c\in C$ with $g(b)=(\sigma_Z\circ G)(c)=(g\circ\sigma_Y)(c)$  so that
$$
b-\sigma_Y(c)\in g^{-1}(0)\cap D= f(X)\cap D,
$$
where we used the definition of $D$ and the algebraic exactness of the bottom row in \eqref{DIAG-VOGT}. There is thus $x\in X$ with $b-\sigma_Y(c)=f(x)$ and hence $x\in f^{-1}(D)\subseteq\sigma_X(E)$ so that there is $e\in E$ with $x=\sigma_X(e)$ and $f(\sigma_X(e))=b-\sigma_Y(c)$. Finally, $k:=c+F(e)\in K$ satisfies
$$
\sigma_Y(k)=\sigma_Y(c)+(\sigma_Y\circ F)(e)=\sigma_Y(c)+(f\circ \sigma_X)(e)=b.
$$
We have thus established that $\sigma_Y$ lifts bounded sets, which implies that $Y$ is regular.
\end{proof}

\begin{thm}\label{REG-EMB-EX} On $\LBr$ the following defines the maximal exact structure:
\begin{equation*}
\begin{aligned}
\Emaxreg&:=\bigl\{(f,g)\in\Callreg\:\big|\: (f,g)\;\text{is }\LBr\text{-stable}\hspace{1pt}\bigr\}\\
&\phantom{:}= \bigl\{X\stackrel{f}{\rightarrow}Y\stackrel{g}{\rightarrow}Z\in\LBr\:\big|\:(f,g) \text{ is algebraically exact, } f(X)\subseteq Y\text{ is well-located}\hspace{1.5pt}\bigr\}\\[1pt]
&\phantom{:}= \Emax\cap\LBr.
\end{aligned}
\end{equation*}
\end{thm}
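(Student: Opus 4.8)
The three expressions for $\Emaxreg$ are the class of $\LBr$-stable kernel--cokernel pairs (by which it is defined), an algebraic/topological description, and $\Emax\cap\LBr$, and I would treat the claim in three pieces. First, that the $\LBr$-stable pairs constitute the \emph{maximal} exact structure is immediate from Theorem~\ref{PROP-MAX}: $\LBr$ is preabelian by Proposition~\ref{LBrKERCOK}, hence karoubian (in a preabelian category every morphism, in particular every idempotent, has a kernel), so Crivei's theorem applies and is in fact already quoted in the text preceding the statement. Second, the equality of the middle description with $\Emax\cap\LBr$ is pure bookkeeping: by Theorem~\ref{LB-confl-str} a member of $\Emax$ is a pair in $\Call$ satisfying $(\ran f,\w)=(g^{-1}(0)^{\flat},\w)$; upon restricting all objects to $\LBr$, membership in $\Call$ becomes algebraic exactness (Proposition~\ref{LB-PROP}(v)) and, since $\ran f=g^{-1}(0)$ as subspaces of $Y$, the weak-topology condition becomes exactly well-locatedness of $\ran f\subseteq Y$. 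The real content is the third piece: that $\LBr$-stability coincides with membership in $\Emax\cap\LBr$, i.e.\ with $\LB$-stability together with regularity of the objects.

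For this I would split along inflation and deflation. The deflation side is painless: by Proposition~\ref{LBr-SUR} a morphism of $\LBr$ is a semistable cokernel iff it is surjective, while in $\LB$ every surjection is a cokernel (Proposition~\ref{LB-PROP}(iv)) and every cokernel is semistable (Remark~\ref{RMK-LB-LAST}(i)); hence the two notions of semistable cokernel agree. Everything therefore reduces to the assertion that \emph{in $\LBr$ a kernel $f\colon X\to Y$ is semistable iff $\ran f\subseteq Y$ is well-located}. For the forward implication I would take an arbitrary $t\colon X\to T$ in $\LBr$ and recall, from Proposition~\ref{LBrKERCOK}(ii), that the $\LBr$-pushout structure map factors as $i\circ q_T$, where $q_T$ is the structure map of the $\LB$-pushout \eqref{PO-00} and $i$ is the inclusion into the mackeyfication. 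Semistability forces $i\circ q_T$ to be an $\LBr$-kernel, hence injective, hence $q_T$ is injective. Letting $t$ range over all of $\LBr$ (which contains $\KK$) and invoking the implication (d)$\Rightarrow$(a) of Lemma~\ref{LEM-D}(ii) with $\Class=\LBr$ yields $(X,\tau_X)'=(X,\tau_Y)'$, i.e.\ well-locatedness.

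The hard direction is the converse: assuming $\ran f\subseteq Y$ well-located, show $f$ is an $\LBr$-semistable kernel. Fix $t\colon X\to T$ in $\LBr$ and form the $\LB$-pushout. Lemma~\ref{LEM-D}(i) shows $q_T$ has closed range and (a)$\Rightarrow$(d) of Lemma~\ref{LEM-D}(ii) shows $q_T$ is injective, so $q_T$ is already an $\LB$-kernel; the task is to see that the mackeyfication $i$ is superfluous, i.e.\ that $Q=(Y\oplus T)/\overline{\ran[f\,\sm\,t]^{\operatorname{T}}}$ is regular. Here the two ambient categories genuinely interact: well-locatedness means $(f,g)\in\Emax$, so $f$ is an $\Emax$-inflation, and since $\Emax$ is an exact structure on $\LB$ its inflations are stable under pushout; hence $q_T$ is again an $\Emax$-inflation, i.e.\ an $\LB$-semistable kernel, whence $\ran q_T\subseteq Q$ is well-located and $q_T$ is a weak isomorphism onto its range. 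By Lemma~\ref{AN-2a} the cokernel $c$ of $q_T$ is the induced map onto $Z$ and the row $T\to Q\to Z$ is algebraically exact (its middle kernel is $\overline{\ran q_T}=\ran q_T$). Now Lemma~\ref{LEM-W}, applied to this row with $T,Z$ regular and $q_T$ a weak isomorphism onto its range, delivers the regularity of $Q$. Consequently $Q^{\sharp}=Q$, the $\LBr$-pushout coincides with the $\LB$-pushout, and $i\circ q_T=q_T$ is an $\LBr$-kernel, so $f$ is $\LBr$-semistable. (For the reverse inclusion one may instead cite Lemma~\ref{LEM-PO-REG}, which guarantees $Q$ regular for $\LBr$-stable pairs directly, so that the $\LBr$- and $\LB$-pushouts again agree.)

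I expect the regularity step just described to be the main obstacle. Both the assembly of the algebraically exact row $T\to Q\to Z$ and, above all, the verification that $q_T$ is a weak isomorphism onto its range (the precise hypothesis Lemma~\ref{LEM-W} requires) rest on exploiting the exact-category formalism for $\Emax$ together with the dictionary between semistable kernels, well-located subspaces, and weak isomorphisms encoded in Remark~\ref{RMK-LB-LAST} and Lemma~\ref{LEM-D}. Once $Q$ is known to be regular, the remaining identifications are routine matchings of the explicit kernel/cokernel formulae of Propositions~\ref{LB-PROP} and~\ref{LBrKERCOK}.
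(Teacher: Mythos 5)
Your proposal is correct, and its skeleton --- maximality from Theorem \ref{PROP-MAX}, the identification of the middle description with $\Emax\cap\LBr$ as bookkeeping via Theorem \ref{LB-confl-str}, and the inclusion $\Emaxreg\subseteq\Emax\cap\LBr$ via Proposition \ref{LBr-SUR} together with (d)$\Rightarrow$(a) of Lemma \ref{LEM-D}(ii) --- matches the paper. Where you genuinely diverge is the inclusion $\Emax\cap\LBr\subseteq\Emaxreg$. The paper disposes of it abstractly: Lemma \ref{LEM-W} shows that $\LBr$ is extension closed in $(\LB,\Emax)$, hence $(\LBr,\Emax\cap\LBr)$ is an exact category by \cite[Lem 10.20]{Buehler}, and any exact structure is contained in the maximal one, i.e.\ in $\Emaxreg$; no pushout is ever computed. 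You instead verify $\LBr$-semistability of the inflation by hand: push $f$ out along an arbitrary $t$ in $\LB$, use axiom \textbf{L2} for $(\LB,\Emax)$ together with Remark \ref{RMK-LB-LAST}(ii) to see that $q_T$ is a weak isomorphism onto its closed range, identify the cokernel row $T\rightarrow Q\rightarrow Z$ via Lemma \ref{AN-2a}, and then apply Lemma \ref{LEM-W} to \emph{that} row to conclude that $Q$ is regular, so that the mackeyfication in the $\LBr$-pushout is the identity and $q_T$ is an $\LBr$-kernel. Both routes rest on the same analytic input (Lemma \ref{LEM-W}); yours is longer but makes explicit where regularity of the pushout object enters --- in effect you re-prove extension-closedness exactly in the pushout situation where it is needed --- whereas the paper's maximality shortcut hides this entirely. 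Conversely, your treatment of the other inclusion is marginally leaner than the paper's: you extract injectivity of $q_T$ directly from injectivity of $i\circ q_T$, while the paper first invokes Lemma \ref{LEM-PO-REG} to identify the $\LBr$- and $\HDLCS$-pushouts before applying Lemma \ref{LEM-D}(ii).
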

\begin{proof}That $\Emaxreg$ as defined above is the maximal exact structure on $\LBr$ follows from Theorem \ref{PROP-MAX}. The last equality is true by definition. By Lemma \ref{LEM-W}, $\LBr\subseteq(\LB,\Emax)$ is extension closed, from whence it follows that $(\LBr,\Emax\cap\LBr)$ is exact, see e.g.\ \cite[Lem 10.20]{Buehler}. Consequently, $\Emax\cap\LBr\subseteq\Emaxreg$ must hold.

\smallskip

For the missing inclusion let $X\stackrel{\scriptscriptstyle f}{\rightarrow}Y\stackrel{\scriptscriptstyle g}{\rightarrow}Z$ in $\Emaxreg$ be given. Proposition \ref{LBr-SUR} implies that $g$ is surjective and hence we may use Proposition \ref{LBrKERCOK}(v) to conclude that $(f,g)$ is algebraically exact. By Lemma \ref{LEM-PO-REG} we get that for any $t\colon X\rightarrow Y$ the quotient $(Y\oplus T)/\overline{\operatorname{ran}[f\sm\hspace{-2pt}t]^{\scriptscriptstyle\operatorname{T}}}$ is regular. In particular, the $\LBr$-pushout of $f$ along any $\LBr$-morphism $t\colon X\rightarrow T$ coincides with the $\HDLCS$-pushout \eqref{LEM-D-PO}. As $f$ is an $\LBr$-kernel, Proposition \ref{LBrKERCOK}(iii) implies  that $f$ is injective and has a closed range; thus Lemma \ref{LEM-D}(ii) is applicable with $\Class=\LBr$. Since $f$ is an $\LBr$-semistable kernel, $q_T$ in \eqref{LEM-D-PO} is in particular injective. Using (d)$\Longrightarrow$(a) from Lemma \ref{LEM-D}(ii) we obtain that $f(X)\subseteq Y$ is well-located.
\end{proof}

We have $\Emaxreg\subseteq\Cregsur$ by definition; the latter indeed holds with a strict inclusion as the next example shows.

\begin{ex}\label{EX-LBr-2} Let $V=(v_n)_{n\in\mathbb{N}}$ be a decreasing sequence of strictly positive functions $v_n\colon\mathbb{N}\rightarrow\mathbb{R}$. We consider the K\"othe co-echelon space of order $1\leqslant p<\infty$, i.e.,
$$
k^{\hspace{1pt}p}(V):=\mathop{\ind}_{n\in\mathbb{N}}\,\ell^{\hspace{1pt}p}(v_n)\vspace{-3pt}
$$
with
$$
\ell^{\hspace{1pt}p}(v_n):=\bigl\{x\in\mathbb{K}^{\mathbb{N}}\:\big|\:\|x\|_{v_n}:=\Bigl(\Bigsum{j=1}{\infty}v_n(j)|x(j)|^p\Bigr)^{1/p}<\infty\bigr\},
$$
which is, for any $V$, a complete, hence regular inductive limit, see, e.g., Bierstedt, Meise, Summers \cite[Thm 2.3(a)]{BMS}. We now take a sequence $V$ that is not `regularly decreasing' as defined in \cite[Dfn 3.1 with $I=\NN$]{BMS}; concrete examples can be found in \cite[Ex 4.11(2) and (3)]{BMS}. In Vogt's \cite[Section 5]{Vogt92} (LF-)notation we get $E^{\hspace{1pt}1}=k^{\hspace{1pt}1}(V)$ by putting $a_{j;\hspace{0.5pt}n,k}:=v_n(j)$. His condition (WQ) is then always satisfied, as can be checked directly or deduced from \cite[Thms 5.10 and 5.14]{Vogt92} in view of the completeness of $k^{\hspace{1pt}1}(V)$. As noted by Bierstedt, Bonet \cite[Lem 3.2 and the sentence right before it]{BB94}, Vogt's condition (Q) is equivalent to $V=(v_n)_{n\in\NN}$ being regularly decreasing. As we considered a sequence $V$ which does not enjoy the latter property, \cite[Thm 5.6]{Vogt92} implies that $k^{\hspace{1pt}1}(V)$ is not `weakly acyclic' \cite[p.~58]{Vogt92} which means by definition that in the canonical resolution 
\begin{equation*}
\Bigosum{n\in\mathbb{N}}{}\ell^{\hspace{0.5pt}1}(v_n)\stackrel{\hspace{-2pt}d}{\longrightarrow}\Bigosum{n\in\mathbb{N}}{}\ell^{\hspace{0.5pt}1}(v_n)\stackrel{\sigma}{\longrightarrow}k^{\hspace{0.5pt}1}(V)
\end{equation*}
the map $d$ is not a weak isomorphism onto its range. Consequently, we obtain $(d,\sigma)\in\Cregsur\backslash\Emaxreg$.\diam{}
\end{ex}

Let us remark, that the notion of (weak) acyclicity was initially introduced in the functional analytic context by Palamodov \cite[p.~33]{Pala71}, who called an inductive spectrum of lcs acyclic [weakly acyclic], if in the corresponding canonical resolution (as in Remark \ref{Stand-RES} but with possibly infinite directed index set) the map $d$ is an isomorphism [weak isomorphism] onto its range. Vogt showed that in the case of an LF-space both properties are invariant under the choice of the spectrum and only depend on the limit space. 

\begin{thm}\label{ETOP-REG} On $\LBr$ the class $\Etopreg=\Etop\cap\LBr$ of topologically exact sequences is an exact structure. 
\end{thm}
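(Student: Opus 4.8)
The plan is to proceed exactly as in the proof of Theorem \ref{REG-EMB-EX}, but with $\Etop$ in place of $\Emax$; here the argument is in fact cleaner, since $\Etopreg$ is \emph{defined} as the intersection $\Etop\cap\LBr$ and so no separate ``missing inclusion'' has to be established. Concretely, I would first show that $\LBr$ is extension closed in $(\LB,\Etop)$ and then invoke \cite[Lem 10.20]{Buehler} to conclude that the restriction $(\LBr,\Etop\cap\LBr)$ is exact.

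The only substantial point is extension closedness, and this is an immediate consequence of Lemma \ref{LEM-W}. Let $X\stackrel{\scriptscriptstyle f}{\rightarrow}Y\stackrel{\scriptscriptstyle g}{\rightarrow}Z$ be an $\Etop$-conflation in $\LB$ with $X,Z\in\LBr$. By the description of $\Etop$ in Theorem \ref{LB-confl-str} the pair is in particular algebraically exact, and $f$ is an isomorphism onto its range, i.e.\ $\ran f=g^{-1}(0)^{\flat}$ holds as topological spaces. Since the domain and subspace topologies agree along $f$, so do the associated dual spaces, and hence $f$ is in particular a \emph{weak} isomorphism onto its range. Thus the hypotheses of Lemma \ref{LEM-W} are met and we obtain that $Y$ is regular, i.e.\ $Y\in\LBr$. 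This is precisely the extension closedness of $\LBr$ in $(\LB,\Etop)$. (Alternatively, one may simply observe $\Etop\subseteq\Emax$ and read off this extension closedness from the one already established for $(\LB,\Emax)$ in the proof of Theorem \ref{REG-EMB-EX}.)

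It remains to check that the members of $\Etop\cap\LBr$ are genuine kernel--cokernel pairs in $\LBr$, so that $\Etop\cap\LBr$ is a legitimate conflation structure to which \cite[Lem 10.20]{Buehler} applies, and that they coincide with the topologically exact sequences in $\LBr$. Kernels agree in $\LB$ and $\LBr$ by Propositions \ref{LB-PROP}(i) and \ref{LBrKERCOK}(i), so $f=\ker g$ holds in $\LBr$. For the cokernel, Proposition \ref{LBrKERCOK}(ii) computes the $\LBr$-cokernel of $f$ as $Y\rightarrow(Y/\overline{f(X)})^{\sharp}$; since $(f,g)$ is a conflation in $\LB$ we have $Z\cong Y/\overline{f(X)}$, and because $Z$ is regular the mackeyfication is trivial, whence $(Y/\overline{f(X)})^{\sharp}=Y/\overline{f(X)}=Z$ and $g=\cok f$ holds in $\LBr$ as well. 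Hence every element of $\Etop\cap\LBr$ lies in $\Callreg$ and is a topologically exact sequence in $\LBr$, and conversely; Buehler's restriction lemma then yields exactness of $(\LBr,\Etopreg)$. The main (and essentially only) obstacle, namely the regularity of the middle term $Y$, has already been isolated and settled in Lemma \ref{LEM-W}, so once that lemma is available the theorem is a formal consequence of extension closedness; the single point requiring care in the write-up is the coincidence of $\LB$- and $\LBr$-cokernels along these conflations, which as above rests on the regularity of $Z$.
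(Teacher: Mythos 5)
Your proposal is correct and follows exactly the paper's own route: extension closedness of $\LBr$ in $(\LB,\Etop)$ via Lemma \ref{LEM-W} (an $\Etop$-inflation is in particular a weak isomorphism onto its range), followed by \cite[Lem 10.20]{Buehler}. The additional verifications you supply (that the $\LBr$-cokernel of such a conflation agrees with the $\LB$-cokernel because $Z$ is already regular, so the mackeyfication is trivial) are details the paper leaves implicit but are consistent with its argument.
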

\begin{proof} By Lemma \ref{LEM-W} the category $\LBr\subseteq(\LB,\Etop)$ is extension closed, hence $\Etopreg=\Etop\cap\LBr$ is an exact structure by \cite[Lem 10.20]{Buehler}.
\end{proof}

\begin{ex}\label{EX-Vogt-2} Let $V=(v_n)_{n\in\NN}$, precisely as in Example \ref{EX-LBr-2}, be a decreasing sequence that is not regularly decreasing. As elaborated in the aforementioned example, we read $k^{\hspace{1pt}p}(V)$ as an LF-space and apply \cite[Thms 5.7 and 5.10]{Vogt92}, now for $1<p<\infty$, to obtain a kernel-cokernel pair $(d,\upsigma)$ in $\LBr$, in which $\sigma$ is surjective and $d$ is a weak isomorphism, but not an isomorphism, onto its range. Hence $(d,\sigma)\in\Emaxreg\backslash\Etopreg$.\diam{}
\end{ex}

\vspace{3pt}

\section{Complete LB-spaces}\label{SEC-COM}

Now we consider the category $\LBc$ of complete LB-spaces, which is a full additive subcategory of $\LB$. Before we look at kernels and cokernels let us mention the following three key obstacles that we have to navigate around:\vspace{4pt}

\begin{compactitem}

\item[1.] It is unknown if the mackeyfication $X^{\sharp}$ of an LB-space is always complete.

\vspace{3pt}

\item[2.] It is unknown if the completion $\widehat{X}$ of an LB-space $X$ is always an LB-space.

\vspace{3pt}

\item[3.] It is unknown if for a closed subspace $Y\subseteq X$ of a complete  LB-space the space $Y^{\flat}$ is always complete.

\vspace{4pt}

\end{compactitem}

If the answer to Question \ref{Q-G} were to be `yes', then all three statements would hold, whereas if it were `no', then there would exist an LB-space whose mackeyfication is a proper subspace of its completion. It is well-known that\,---\,outside the class of LB-spaces\,---\,there exist lcs which are Mackey complete but not even sequentially complete, see \cite[Ex 5.1.12]{BPC}. Furthermore, there exist bornological spaces with a non-bornological completion, see \cite[Section 6.6]{BPC}. On the other hand, the so-called Moscatelli type LB-spaces constitute a whole class of LB-spaces which are complete iff they are regular and whose completions are always LB-spaces, see Bonet, Dierolf, Ku\ss{} \cite{BD89, DK}. Finally, if there were to exist an LB-space whose completion is not LB, then Question \ref{Q-G} could be answered with `no'.

\begin{dfn}\label{DFN-LB-COMP} Let $X$ be an LB-space. An injective map with dense range $i\colon X\rightarrow Y$ into a complete LB-space is called an \emph{LB-completion} of $X$, if for any $f\colon X\rightarrow Z$ into a complete LB-space there exists a unique map $\widehat{f}^{\hspace{3pt}\text{\raisebox{0.85mm}{$\scriptscriptstyle\LB$}}}\colon Y\rightarrow Z$ with $\widehat{f}^{\hspace{3pt}\text{\raisebox{0.85mm}{$\scriptscriptstyle\LB$}}}\circ i=f$. If an LB-completion exists, then it is unique up to unique isomorphism and we denote it by $\widehat{X}^{\text{\raisebox{0.85mm}{$\scriptscriptstyle\LB$}}}$.\diam{}
\end{dfn}

 If for an LB-space $X$ the (usual) completion $\widehat{X}$ is an LB-space, then $\widehat{X}=\widehat{X}^{\text{\raisebox{0.85mm}{$\scriptscriptstyle\LB$}}}$ and $i$ will be open onto its range. We do not know if there is an LB-space for which an LB-completion exists such that $i$ is not open onto its range. 

\begin{prop}\label{LBcKERCOK}We consider the category $\LBc$. \vspace{3pt}
\begin{compactitem}
\item[(i)] A map in $\LBc$ is monic/epic iff it is injective/has a dense range.\vspace{4pt}

\item[(ii)] A map $f\colon X\rightarrow Y$ has a kernel in $\LBc$ iff $f^{-1}(0)^{\flat}$ is complete. If this is the case, then the kernel of $f$ is the inclusion $f^{-1}(0)^{\flat}\rightarrow X$.

\vspace{4pt}

\item[(iii)] A map $f\colon X\rightarrow Y$ has a cokernel iff $Y/\overline{f(X)}$ has an LB-completion. If this is the case, then the cokernel is $i\circ q\colon X\rightarrow(Y/\overline{f(X)})^{\raisebox{2pt}{$\scriptstyle\wedge_{\LB}$}}$.

\vspace{0.5pt}

\item[(iv)] A sequence $X\stackrel{f}{\rightarrow}Y\stackrel{g}{\rightarrow}Z$ in $\LBc$ is a kernel-cokernel pair in $\LBc$ with $g$ being a surjection iff it is a kernel-cokernel pair in $\LB$.
\end{compactitem}
\end{prop}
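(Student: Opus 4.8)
The plan is to dispatch the four parts in order, using the scalar field $\KK$ (a Banach space, hence an object of $\LBc$) as a universal test object throughout, and isolating the only genuinely analytic ingredient — the open mapping theorem for LB-spaces — in part (ii). For (i), the forward implications are inherited from $\LB$: by Proposition \ref{LB-PROP}(vi),(vii) an injective map is monic and a dense-range map is epic in $\LB$, and these properties only weaken when the class of test objects shrinks to $\LBc$. For the reverse implications I would exhibit the obstructing witnesses inside $\LBc$. If $f$ is not injective, choose $0\neq x_0\in f^{-1}(0)$ and compare the rank-one map $\KK\to X$, $\lambda\mapsto\lambda x_0$, with the zero map to see that $f$ is not monic; if $\overline{f(X)}\neq Y$, Hahn--Banach yields a nonzero $\varphi\colon Y\to\KK$ vanishing on $\overline{f(X)}$, so that $\varphi f=0=0\cdot f$ and $f$ is not epic. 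As $\KK\in\LBc$, both arguments are valid in $\LBc$.

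For (ii), one direction is formal: if $f^{-1}(0)^{\flat}$ is complete then it lies in $\LBc$, and being the $\LB$-kernel of $f$ (Proposition \ref{LB-PROP}(i)) it continues to satisfy the kernel universal property once the test objects are restricted to $\LBc$, the factoring maps being automatically $\LBc$-morphisms. The substance is the converse. Given an $\LBc$-kernel $k\colon K\to X$ of $f$, part (i) makes $k$ injective, and testing the universal property against the rank-one maps $\KK\to X$ landing in $f^{-1}(0)$ forces $k(K)=f^{-1}(0)$ as vector spaces. The $\LB$-kernel property of $\iota\colon f^{-1}(0)^{\flat}\to X$ then produces a continuous linear $\psi\colon K\to f^{-1}(0)^{\flat}$ with $\iota\psi=k$, and comparing images shows $\psi$ is bijective. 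The main obstacle is to upgrade this continuous bijection to a topological isomorphism: here I would use that every LB-space is simultaneously ultrabornological and webbed, so De Wilde's open mapping theorem makes $\psi$ open. Hence $f^{-1}(0)^{\flat}\cong K$ is complete. This is the only step where a true functional-analytic theorem, rather than a diagram chase, is needed.

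For (iii), if $W:=(Y/\overline{f(X)})^{\wedge_{\LB}}$ exists I would verify that $c:=i\circ q\colon Y\to W$ is the $\LBc$-cokernel of $f$: one has $cf=0$, and any $h\colon Y\to H$ in $\LBc$ with $hf=0$ vanishes on $\overline{f(X)}$ by continuity, hence factors through $q$, and the resulting map factors uniquely through $i$ by Definition \ref{DFN-LB-COMP}; uniqueness overall follows since $c$ has dense range. Conversely, an $\LBc$-cokernel $c\colon Y\to C$ has dense range by (i) and factors as $c=\bar c\circ q$; the Hahn--Banach witness from (i) shows $\bar c$ is injective, and composing an arbitrary $g\colon Y/\overline{f(X)}\to Z$ into a complete LB-space with $q$ and invoking the cokernel universal property of $c$ verifies the universal property of Definition \ref{DFN-LB-COMP}. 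Thus $\bar c$ realises $C$ as the LB-completion of $Y/\overline{f(X)}$, and the cokernel is $i\circ q$ as claimed.

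Finally (iv) is a bookkeeping consequence of (ii), (iii) and Proposition \ref{LB-PROP}(v), which identifies the kernel-cokernel pairs in $\LB$ with the algebraically exact sequences. If $(f,g)$ is such a pair in $\LB$, then $X\cong g^{-1}(0)^{\flat}$ and $Z\cong Y/\overline{f(X)}$ are complete (being objects of $\LBc$), so by (ii) and (iii) they coincide with their own $\LBc$-kernel and LB-completion, exhibiting $(f,g)$ as a kernel-cokernel pair in $\LBc$ with $g$ surjective. Conversely, if $(f,g)$ is a kernel-cokernel pair in $\LBc$ with $g$ surjective, then $f=\ker g$ in $\LBc$ forces, through (ii), that $f$ is the inclusion $g^{-1}(0)^{\flat}\hookrightarrow Y$; thus $f$ is injective with $f(X)=g^{-1}(0)$, and together with surjectivity of $g$ this is exactly algebraic exactness, so $(f,g)$ is a kernel-cokernel pair in $\LB$ by Proposition \ref{LB-PROP}(v).
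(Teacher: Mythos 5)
Your proposal is correct and follows essentially the same route as the paper's proof: $\KK$ as test object plus Hahn--Banach for (i), the open mapping theorem for LB-spaces to upgrade the continuous bijection onto $f^{-1}(0)^{\flat}$ in (ii), factoring the cokernel through the quotient and matching it against the universal property of the LB-completion in (iii), and assembling these for (iv). The only cosmetic differences are that you phrase the key step of (ii) via the universal property of the $\LB$-kernel rather than via the identity map on $f^{-1}(0)$, and in (iv) you route the easy direction through (ii)/(iii) where the paper simply observes that the universal properties are tested against fewer objects.
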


\begin{proof}(i) This follows from $\KK\in\LBc$ by using the map $g\colon\KK\rightarrow X$, $\lambda\mapsto\lambda x$, respectively the functional $g\colon Y\rightarrow\KK$ with $g(y_0)=1$ for some $y_0\in Y\backslash\overline{f(X)}$ but $g|_{\overline{f(X)}}\equiv0$. 

\smallskip

(ii) Assume that $f$ has a kernel $k\colon\ker f\rightarrow X$. Then $k$ is monic and thus by (i) injective. We may thus assume that $\ker f\subseteq X$ is a linear subspace and that $k$ is the inclusion map. Since $f\circ k=0$, it follows $\ker f\subseteq f^{-1}(0)$ as sets and by looking at the map $\mathbb{K}\rightarrow X$, $\lambda\mapsto \lambda x$ for $x\in f^{-1}(0)$ we deduce $\ker f=f^{-1}(0)$ as linear spaces. Since $k$ is continuous and the inclusion of a linear subspace, we get that the identity $\ker f\rightarrow (f^{-1}(0),\uptau_X)$ is continuous. Taking the $\flat$-topologies on both sides preserves the continuity of the identity, but does not change the topology on $\ker f$, as the latter was already an LB-space. By the open mapping theorem $\ker f\rightarrow (f^{-1}(0),\uptau_X)^{\flat}$ is an isomorphism. The other direction follows from Section \ref{SEC-2}.

\smallskip

(iii) If $(Y/\overline{f(X)})^{\raisebox{2pt}{$\scriptstyle\wedge_{\LB}$}}$ exists, then it is straightforward to verify the universal property. For the other direction let $c\colon Y\rightarrow C$ be an $\LBc$-cokernel of $f$. This means by definition that $c\circ f=0$ and thus $\overline{f(X)}\subseteq c^{-1}(0)$. Let $y\in Y\backslash\overline{f(X)}$. Then by Hahn-Banach there exists $\varphi\colon Y\rightarrow\mathbb{K}$ with $\varphi(\overline{f(X)})=0$ and $\varphi(y)=1$. The first condition implies $\varphi\circ f=0$ and thus by the cokernel property in $\LBc$ we get $h\colon C\rightarrow\KK$ satisfying $(h\circ c)(y)=1$. In particular, $c(y)\not=0$ from whence it follows $c^{-1}(0)=\overline{f(X)}$. Let now $q\colon Y\rightarrow Y/\overline{f(X)}$ be the quotient map. Then $q=\cok f$ in $\LB$ and we get
$$
i\colon Y/\overline{f(X)}\rightarrow C\;\text{ with }\;i\circ q = c
$$ 
and $i$ is injective since $i(y+\overline{f(X)})=0$ implies $0=(i\circ q)(y)=c(y)$. By what we noted above it follows $y\in\overline{f(X)}$. Since $c$, as an $\LBc$-cokernel, is epic, we get from (i) that $i$ has a dense range. Let finally $g\colon Y/\overline{f(X)}\rightarrow Z$ be given with $Z\in\LBc$. Then $g\circ q\circ f=0$ and as $i\circ q=c =\cok f$ in $\LBc$, we get a unique map $h\colon C\rightarrow Z$ with $h\circ i\circ q = g\circ q$:
\begin{equation*}
\begin{tikzcd}[column sep=28pt,row sep=13pt]
 & & Y/\overline{f(X)}\arrow{ddr}{g}\arrow{r}[]{\hspace{-8pt}i} & C \arrow[dashed]{dd}{h} \\[-5pt]
  X\arrow{r}{f}\arrow[bend left =35]{rrru}{0}\arrow[bend right =23]{rrrd}[swap]{0} & Y\arrow{ur}{q}\arrow{drr}[swap]{g\hspace{1pt}\circ\hspace{1pt}q} &\\[-5pt]
  &  & & Z.
\end{tikzcd}
\end{equation*}
Since $q$ is surjective, this implies $h\circ i = g$, and since $i$ has a dense range, there can only be one such map $h$. Thus, we have established that $i\colon Y/\overline{f(X)}\rightarrow C$ is the LB-completion of $Y/\overline{f(X)}$.

\smallskip

(iv) \textquotedblleft{}$\Longrightarrow$\textquotedblright{} By assumption $f=\ker g$, and thus we get from (ii) that $f$ is injective and has a closed range. By (iii) we know that $Z$ is the LB-completion of $Y/f(X)$, or, more precisely, that $g=i \circ q$ where $q\colon Y\rightarrow Y/f(X)$ is the quotient map and $i\colon Y/f(X)\rightarrow Z$ is injective and has a dense range. However, as $g$ is by assumption surjective, $i$ has to be surjective, and is thus an isomorphism by the open mapping theorem. Consequently, $g^{-1}(0)=f(X)$ and $(f,g)$ is a kernel-cokernel pair in $\LB$ by Proposition \ref{LB-PROP}(v). 

\smallskip

\textquotedblleft{}$\Longleftarrow$\textquotedblright{} If $X\stackrel{\scriptscriptstyle f}{\rightarrow}Y\stackrel{\scriptscriptstyle g}{\rightarrow}Z$ is a kernel-cokernel pair in $\LB$ with $X,Y,Z\in\LBc$, then $f=\ker g$ and $g=\cok f$ hold in $\LBc\subset\LB$, too.
\end{proof}

We denote by $\Callcom$ the class of all kernel-cokernel pairs in $\LBc$. There exist $(f,g)\in\Callcom$ with non-surjective $g$: Let $X$ be a non-regular LB-space whose completion is an LB-space, cf.~\cite[Ex 8.8.8 and 8.8.9]{BPC} or \cite{DK} for concrete examples, and consider its standard resolution $(d,\sigma)$ as in Remark \ref{Stand-RES}. If $i\colon X\rightarrow\widehat{X}$ denotes the inclusion, then $(d,i\circ\sigma)\in\Callcom$ and $i\circ\sigma$ is not surjective.
 
\smallskip

Even though our information about kernels and cokernels is limited compared to $\LBr$, we can consider on $\LBc$ three natural conflation structures. We start this time with the topologically exact sequences.

\begin{thm} On $\LBc$ the topologically exact sequences $\Etopcom:=\Etop\cap\LBc$ form an exact structure.
\end{thm}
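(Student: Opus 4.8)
The plan is to imitate the proof of Theorem~\ref{ETOP-REG}: I would establish that $\LBc\subseteq(\LB,\Etop)$ is extension closed and then invoke \cite[Lem 10.20]{Buehler} to deduce that $(\LBc,\Etop\cap\LBc)$ is exact. Since every pair in $\Etop$ is by definition a kernel-cokernel pair in $\LB$, the middle term of such a pair is automatically an LB-space; thus extension-closedness comes down to the following three-space property for completeness, the exact analogue of Lemma~\ref{LEM-W}: if $X\stackrel{\scriptscriptstyle f}{\rightarrow}Y\stackrel{\scriptscriptstyle g}{\rightarrow}Z$ lies in $\Etop$ and $X,Z\in\LBc$, then $Y$ is complete.

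First I would unpack the hypothesis $(f,g)\in\Etop$. By definition $\ran f=g^{-1}(0)^{\flat}$ as topological spaces, so $f$ is an isomorphism onto its range and $F:=f(X)=g^{-1}(0)$, carrying the subspace topology of $Y$, is a closed subspace isomorphic to $X$ and hence complete. Dually, $g$ is an open surjection identifying $Z$ with the locally convex quotient $Y/F$, which is complete by assumption. It is precisely here that working with $\Etop$ rather than $\Emax$ is essential: for an $\Emax$-conflation $f$ is only a \emph{weak} isomorphism onto its range, so the subspace $f(X)$ need not be complete in its induced topology; this is the same difficulty that underlies obstacle~3 in Section~\ref{SEC-COM}, where the $\flat$-topology intervenes. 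The task is thus reduced to a clean three-space statement: a closed complete subspace $F\subseteq Y$ with $Y/F$ complete forces $Y$ to be complete.

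For the latter I would argue with Cauchy filters and the completion $\widehat{Y}$. Since $F$ is complete it is closed in $\widehat{Y}$ with $\overline{F}^{\widehat{Y}}=F$, so the quotient $\widehat{q}\colon\widehat{Y}\rightarrow\widehat{Y}/F$ is Hausdorff, and the quotient map $q\colon Y\rightarrow Y/F$ induces a continuous $j\colon Y/F\rightarrow\widehat{Y}/F$ with $\widehat{q}|_Y=j\circ q$. Given a Cauchy filter $\mathcal{F}$ on $Y$, it converges to some $\widehat{x}\in\widehat{Y}$, while its image $q(\mathcal{F})$ is Cauchy in the complete space $Y/F$ and hence converges to some $q(a)$ with $a\in Y$. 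Continuity of $\widehat{q}$ and $j$ then gives $\widehat{q}(\widehat{x})=\lim\widehat{q}(\mathcal{F})=\lim j(q(\mathcal{F}))=j(q(a))=\widehat{q}(a)$, whence $\widehat{x}-a\in\ker\widehat{q}=F\subseteq Y$ and so $\widehat{x}\in Y$. Thus every Cauchy filter converges in $Y$, i.e.\ $Y$ is complete. Granting the three-space property, the rest is formal; I expect the single load-bearing point to be the completeness of the \emph{subspace} $f(X)$, which is exactly what the definition of $\Etop$ secures and which would be unavailable for the coarser structures $\Emax$ and $\Dmax$.
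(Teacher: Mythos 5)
Your proposal is correct and follows essentially the same route as the paper: both reduce the statement to extension-closedness of $\LBc$ in $(\LB,\Etop)$ via the three-space property for completeness along topologically exact sequences, and then invoke \cite[Lem 10.20]{Buehler}. The only difference is that the paper cites this three-space property from the literature (\cite[Prop 1.3]{DR81}, Bourbaki), whereas you supply a direct Cauchy-filter proof of it, which is a sound substitute.
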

\begin{proof} By \cite[Prop 1.3]{DR81}, see also \cite[Chapitr\'e III, \S\hspace{0.5pt}3, Ex 9]{Bourbaki}, \cite[Prop 2.4.2]{BPC} and \cite[Table on p.~2114]{DS12}, the following holds: If  $X\rightarrow Y\rightarrow Z$ is a topologically exact sequence of lcs with complete $X$ and $Z$, then $Y$ is complete as well. We thus get that $\LBc\subseteq(\LB,\Etop)$ is extension closed, hence $\Etopcom$ is an exact structure by \cite[Lem 10.20]{Buehler}.
\end{proof}

\begin{prop}\label{LBcKaroubi} The category $\LBc$ is karoubian.
\end{prop}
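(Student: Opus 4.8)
The plan is to prove that every idempotent $p\colon X\to X$ in $\LBc$ admits a kernel, which by the definition recalled before Theorem \ref{PROP-MAX} is exactly what karoubianness requires. By Proposition \ref{LBcKERCOK}(ii) this amounts to showing that $p^{-1}(0)^{\flat}$ is complete, and I would in fact establish the stronger assertion that $N:=p^{-1}(0)$, endowed with the \emph{subspace} topology $\tau_{\mathrm{sub}}$ inherited from $X$, is already a complete LB-space. Once this is known, the $\flat$-topology changes nothing, so $N^{\flat}=N$ is complete and Proposition \ref{LBcKERCOK}(ii) yields the kernel $N\hookrightarrow X$.

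First I would exploit the splitting furnished by $p$. Since $p$ and $1-p$ are continuous idempotents, $1-p\colon X\to N$ is a continuous retraction onto $N=\operatorname{ran}(1-p)$, and $N$ is closed (as $p$ is continuous and $X$ Hausdorff), hence complete as a closed subspace of the complete space $X$; in particular $N$ is Mackey complete. As a complete LB-space, $X$ is regular, so by the standard representation of a regular LB-space as the inductive limit of its bounded Banach disks (exactly as used in the proof of Lemma \ref{DOM-LEM}) I may fix a fundamental sequence $(B_n)_{n\in\NN}$ of bounded Banach disks with $X=\ind_{n}X_{B_n}$ topologically. The natural candidate for the LB-structure on $N$ is then $\ind_{n}N_{D_n}$, where $D_n:=\overline{(1-p)(B_n)}^{\,N}$ is a closed bounded disk in $N$, and hence a Banach disk by Mackey completeness of $N$ (cf.\ the Banach-disk criterion used in Lemma \ref{DOM-LEM}).

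The heart of the argument, and the step I expect to be the main obstacle, is to identify the inductive-limit topology $\tau_{\mathrm{ind}}$ on $\ind_{n}N_{D_n}$ with $\tau_{\mathrm{sub}}$. This is precisely the kind of statement that fails for a \emph{general} closed subspace of a complete LB-space, where $Y^{\flat}$ may be strictly finer than $Y$ (this is the third obstacle listed at the start of Section \ref{SEC-COM}, and is visible already in Example \ref{EX-GROTH}); the point is that the retraction $1-p$ rescues the situation. The inclusion $\tau_{\mathrm{ind}}\supseteq\tau_{\mathrm{sub}}$ is immediate, since each $N_{D_n}\to(N,\tau_{\mathrm{sub}})$ is continuous. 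For the reverse inclusion I would use that $1-p$ maps $B_n$ into $D_n$, so it induces norm-$\leqslant 1$ maps $X_{B_n}\to N_{D_n}$ and, by the universal property of $X=\ind_{n}X_{B_n}$, a continuous map $1-p\colon X\to(N,\tau_{\mathrm{ind}})$. Since $1-p$ restricts to the identity on $N$, any $\tau_{\mathrm{ind}}$-continuous seminorm $q$ on $N$ satisfies $q=(q\circ(1-p))|_{N}$, exhibiting $q$ as the restriction to $N$ of the continuous seminorm $q\circ(1-p)$ on $X$; hence $q$ is $\tau_{\mathrm{sub}}$-continuous. This forces $\tau_{\mathrm{ind}}\subseteq\tau_{\mathrm{sub}}$, and therefore $\tau_{\mathrm{ind}}=\tau_{\mathrm{sub}}$.

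Putting these together, $N$ with its subspace topology equals the LB-space $\ind_{n}N_{D_n}$ and is complete, so $p^{-1}(0)^{\flat}=N$ is complete. By Proposition \ref{LBcKERCOK}(ii) the idempotent $p$ has a kernel in $\LBc$, and since $p$ was arbitrary, $\LBc$ is karoubian. I would remark that the same computation applied to $p$ (in place of $1-p$) shows $\operatorname{ran}p$ is likewise a complete LB-space, so the idempotent in fact splits as $X\cong\ker p\oplus\operatorname{ran}p$ inside $\LBc$, consistent with idempotent completeness.
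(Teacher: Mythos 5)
Your proof is correct and follows essentially the same route as the paper's: both reduce the claim to the observation that, because $p^{-1}(0)$ is complemented via the retraction $1-p$, its subspace topology is already ultrabornological (so the $\flat$-topology changes nothing), and then use that $p^{-1}(0)$ is closed, hence complete, in $X$. The only difference is that where the paper simply cites \cite[Cor 6.2.2]{BPC} for the stability of ultrabornologicity under complemented subspaces, you reprove it by hand through the explicit Banach-disk inductive limit and the seminorm-transfer argument along $1-p$.
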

\begin{proof}Let $p\colon X\rightarrow X$ be an idempotent in $\LBc$. Then $(p(X),\uptau_X)\subseteq X$ is a direct summand and \cite[Cor 6.2.2]{BPC} yields that $(p^{-1}(0),\uptau_X)$ is ultrabornological and thus the latter coincides with the $\flat$-topology. Secondly, $p^{-1}(0)\subseteq X$ is closed and therefore complete. Consequently, the inclusion $p^{-1}(0)^{\flat}\rightarrow X$ is a kernel of $p$.
\end{proof}

\begin{lem}\label{LEM-StaPa}\begin{myitemize}\setlength{\itemindent}{-10pt}\item[(i)] In $\LBc$ every semistable cokernel is surjective.

\setlength{\itemindent}{0pt}\vspace{1pt}

\item[(ii)] Let $X\stackrel{f}{\rightarrow}Y\stackrel{g}{\rightarrow}Z$ be an $\LBc$-stable kernel-cokernel pair and let $t\colon X\rightarrow T$ be an arbitrary morphism in $\LBc$. Then $Q:=(Y\oplus T)/\overline{\operatorname{ran}[f\sm\hspace{-2pt}t]^{\scriptscriptstyle\operatorname{T}}}$ is complete.
\end{myitemize}
\end{lem}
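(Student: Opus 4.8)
For part (i), I would run the forward implication in the proof of Proposition~\ref{LBr-SUR} almost verbatim, the only change being that the one-dimensional test object $\KK$ lies in $\LBc$. Explicitly: let $g\colon Y\rightarrow Z$ be an $\LBc$-semistable cokernel and fix $z_0\in Z$. Dualizing Definition~\ref{AN-3} and applying it to $t\colon\KK\rightarrow Z$, $\lambda\mapsto\lambda z_0$, the pullback $(P,p_{\KK},p_Y)$ of $g$ along $t$ exists in $\LBc$ and $p_{\KK}$ is again a cokernel. By Proposition~\ref{LBcKERCOK}(i) cokernels are epic, so $p_{\KK}$ has dense range; since its codomain is one-dimensional this forces $p_{\KK}$ to be surjective. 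Picking $x\in P$ with $p_{\KK}(x)=1$ and using that the pullback square commutes yields $g(p_Y(x))=t(p_{\KK}(x))=z_0$. As $z_0$ was arbitrary, $g$ is surjective.

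For part (ii) the plan is to imitate Lemma~\ref{LEM-PO-REG}, with the LB-completion playing the role that the mackeyfication played there. First, part (i) shows that $g$ is surjective, so by Proposition~\ref{LBcKERCOK}(iv) the pair $(f,g)$ is a kernel-cokernel pair in $\LB$; in particular $f$ is injective with closed range. By Remark~\ref{RMK-LB-1} the $\HDLCS$-pushout (equivalently $\LB$-pushout) of $f$ along $t$ is precisely $Q=(Y\oplus T)/\overline{\operatorname{ran}[f\sm t]^{\operatorname{T}}}$, and the pushout property together with Lemma~\ref{AN-2a} produces a map $c\colon Q\rightarrow Z$ with $c\circ q_Y=g$, $c\circ q_T=0$ and $c=\cok q_T$ in $\LB$. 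Since $f$ is an $\LBc$-semistable kernel, the $\LBc$-pushout of $f$ along $t$ also exists; by Proposition~\ref{LBcKERCOK}(iii) it is exactly the LB-completion $\widehat{Q}^{\,\LB}$ of $Q$, equipped with the maps $i\circ q_Y$, $i\circ q_T$, where $i\colon Q\rightarrow\widehat{Q}^{\,\LB}$ is the canonical injection with dense range. Semistability further guarantees that $i\circ q_T$ is an $\LBc$-kernel, hence injective with closed range; consequently $q_T$ is injective and, since $q_T(T)=i^{-1}\!\bigl((i\circ q_T)(T)\bigr)$ (alternatively by Lemma~\ref{LEM-D}(i)), has closed range in $Q$.

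It remains to upgrade $i$ to an isomorphism. Because $Z$ is complete, the universal property of the LB-completion extends $c$ to a map $c^{\LB}\colon\widehat{Q}^{\,\LB}\rightarrow Z$ with $c^{\LB}\circ i=c$, and Lemma~\ref{AN-2a} (now in $\LBc$) identifies $c^{\LB}=\cok(i\circ q_T)$. This gives a commutative ladder with rows $T\xrightarrow{q_T}Q\xrightarrow{c}Z$ and $T\xrightarrow{i\circ q_T}\widehat{Q}^{\,\LB}\xrightarrow{c^{\LB}}Z$ and vertical maps $\id_T$, $i$, $\id_Z$. Both rows are short exact sequences of vector spaces: the left maps are injective, the right maps are surjective (each has $g$ factoring through it), and middle-exactness follows from $c=\cok q_T$ in $\LB$ and $c^{\LB}=\cok(i\circ q_T)$ in $\LBc$ having kernels $\overline{\operatorname{ran}q_T}=\operatorname{ran}q_T$ and $\overline{\operatorname{ran}(i\circ q_T)}=\operatorname{ran}(i\circ q_T)$ respectively (both ranges being closed). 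The short five lemma then makes $i$ a vector-space bijection, and since $Q$ and $\widehat{Q}^{\,\LB}$ are LB-spaces the open mapping (closed graph) theorem upgrades the continuous bijection $i$ to an isomorphism. Hence $Q\cong\widehat{Q}^{\,\LB}$ is complete.

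The main obstacle I anticipate is the bookkeeping needed to identify the $\LBc$-pushout with the LB-completion $\widehat{Q}^{\,\LB}$ of the $\LB$-pushout and to check that the induced comparison map is the canonical injection $i$; once this is in place the argument is purely formal. I would stress that this route deliberately avoids trying to show that $q_T$ is a topological embedding, which would allow a direct appeal to a three-space property for completeness: since $f$ is only well-located rather than a limit subspace, $q_T$ need not be a limit embedding, so the combination of the five lemma and the open mapping theorem is the clean way to close the argument.
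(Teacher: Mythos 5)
Your proposal is correct and follows essentially the same route as the paper: part (i) reruns the forward implication of Proposition \ref{LBr-SUR}, using that the relevant pullback exists by the very definition of a semistable cokernel, and part (ii) repeats the argument of Lemma \ref{LEM-PO-REG} with the LB-completion in place of the mackeyfication, concluding via the five lemma and the open mapping theorem that the comparison map $i\colon Q\rightarrow\widehat{Q}^{\,\LB}$ is an isomorphism. The only (harmless) variation is that you establish exactness of the middle row via the closedness of $\operatorname{ran}q_T$, whereas the paper's template pulls elements back through the injective map $i$; both work.
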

\begin{proof} (i) It is enough to repeat the proof of \textquotedblleft{}$\Longrightarrow$\textquotedblright{} in Proposition \ref{LBr-SUR} and while doing so to observe that, although pullbacks do not exist in general in $\LBc$, this specific pullback $(P,p_{\KK},p_Y)$ exists by assumption, cf.~Definition \ref{AN-3}.

\smallskip

(ii) To see this, we repeat the proof of Lemma \ref{LEM-PO-REG}, replacing the mackeyfication with the LB-completion and observing that, at the very end of the proof, the map $i\colon Q\rightarrow\widehat{Q}^{\text{\raisebox{0.85mm}{$\scriptscriptstyle\LB$}}}$ is an isomorphism due to the open mapping theorem. Again, the LB-completion exists by Definition \ref{AN-3} in combination with Proposition \ref{LBcKERCOK}(iii).
\end{proof}

\begin{thm}\label{LBc-MAX-THM} On $\LBc$ the following defines the maximal exact structure:
\begin{equation*}
\begin{aligned}
\Emaxcom&:=\bigl\{(f,g)\in\Callcom\:\big|\: (f,g)\;\text{is }\LBc\text{-stable}\hspace{1pt}\bigr\}\\
&\phantom{:}= \bigl\{X\stackrel{f}{\rightarrow}Y\stackrel{g}{\rightarrow}Z \in\LBc\:\big|\:(f,g)\,\text{is algebraically exact,}\,f(X)\subseteq Y\\
&\hspace{32pt}\text{is well-located, }\forall\,t\colon X\rightarrow T \text{ in } \LBc\colon\textstyle\frac{Y\oplus T}{\overline{\operatorname{ran}[f\:\sm\hspace{-0.5pt}t]^{\operatorname{T}}}} \text{ is complete}\hspace{1pt}\bigr\}.
\end{aligned}
\end{equation*}
\end{thm}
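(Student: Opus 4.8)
The plan is to read off the maximality from the abstract machinery and then prove the explicit description by a double inclusion, exactly parallel to the regular case (Theorem \ref{REG-EMB-EX}) but fighting the completeness obstacles. Since $\LBc$ is karoubian by Proposition \ref{LBcKaroubi}, Theorem \ref{PROP-MAX} applies and shows at once that the stable kernel-cokernel pairs constitute the maximal exact structure; this is the first description of $\Emaxcom$, so only the second equality remains.

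For the inclusion \textquotedblleft$\subseteq$\textquotedblright{} let $(f,g)$ be $\LBc$-stable. As $g$ is a semistable cokernel, Lemma \ref{LEM-StaPa}(i) makes it surjective, whence $(f,g)$ is a kernel-cokernel pair in $\LB$ by Proposition \ref{LBcKERCOK}(iv) and thus algebraically exact by Proposition \ref{LB-PROP}(v). Completeness of every $Q_t := (Y \oplus T)/\overline{\ran[f\,\sm t]^{\operatorname{T}}}$ is furnished by Lemma \ref{LEM-StaPa}(ii). Finally, since $Q_t$ is then complete the $\LBc$-pushout of $f$ along $t$ agrees with the $\HDLCS$-pushout \eqref{LEM-D-PO}, and as $f$ is a semistable kernel its map $q_T$ is a kernel, hence injective; the implication (d)$\Rightarrow$(a) of Lemma \ref{LEM-D}(ii), taken with $\Class = \LBc$, yields that $f(X) \subseteq Y$ is well-located.

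The reverse inclusion splits into the inflation and the deflation side. For the inflation side I assume the three conditions and check that $f$ is a semistable kernel: Proposition \ref{LBcKERCOK}(iv) first turns the algebraically exact pair into a kernel-cokernel pair in $\LBc$. Fixing $t: X \to T$, the completeness hypothesis forces $Q_t$ complete, so $Q_t$ is simultaneously the $\HDLCS$- and the $\LBc$-pushout. By Lemma \ref{AN-2a} the cokernel $c := \cok q_T$ formed in $\LB$ satisfies $c \circ q_Y = g$ and is therefore surjective; Lemma \ref{LEM-D}(i) gives that $q_T$ has closed range and (a)$\Rightarrow$(d) of Lemma \ref{LEM-D}(ii) gives that $q_T$ is injective. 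Hence $(q_T,c)$ is algebraically exact with complete terms and surjective $c$, so Proposition \ref{LBcKERCOK}(iv) makes it a kernel-cokernel pair in $\LBc$ and identifies $q_T = \ker c$ as a kernel there; thus $f$ is semistable.

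The deflation side is where the real difficulty lies. I must show that $g$ is a semistable cokernel, i.e.\ that for every $s: S \to Z$ in $\LBc$ the pullback of $g$ along $s$ exists in $\LBc$; its projection onto $S$ is then a cokernel because the resulting sequence $X \to P_s \to S$ is algebraically exact with complete terms and a surjection onto $S$ (Proposition \ref{LBcKERCOK}(iv)). The only candidate is the $\LB$-pullback $P_s = [s\,\shortminus g]^{-1}(0)^{\flat}$, and by Proposition \ref{LBcKERCOK}(ii) it exists in $\LBc$ precisely when $P_s$ is complete. This is exactly the point at which $\LBc$ and $\LBr$ diverge: completeness is not inherited by closed subspaces in the $\flat$-topology, which is why the converse of Lemma \ref{LEM-StaPa}(i) is unavailable and surjectivity of $g$ by itself is not enough. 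Applying Lemma \ref{LEM-W} to $X \to P_s \to S$\,---\,whose inflation is a weak isomorphism onto its range by well-locatedness and whose ends are regular\,---\,I at least obtain that $P_s$ is regular. Upgrading this regularity to completeness by exploiting the completeness hypothesis is the main obstacle: I expect it to require a transfer of completeness from all pushouts to all pullbacks of $(f,g)$, proved by an isomorphism argument on the pullback dual to the $5$-lemma computation underlying Lemma \ref{LEM-StaPa}(ii).
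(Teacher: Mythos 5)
Your treatment of the maximality claim, the inclusion \textquotedblleft{}$\subseteq$\textquotedblright{}, and the inflation half of \textquotedblleft{}$\supseteq$\textquotedblright{} all track the paper's argument correctly: karoubianness plus Theorem \ref{PROP-MAX} gives maximality, Lemma \ref{LEM-StaPa} and Lemma \ref{LEM-D}(ii) with $\Class=\LBc$ handle the forward inclusion, and the completeness hypothesis on the pushout quotients together with Proposition \ref{LBcKERCOK}(iv) and Remark \ref{RMK-LB-LAST}(ii) (equivalently, your hands-on route through Lemma \ref{LEM-D}(i) and (a)$\Rightarrow$(d)) makes $f$ an $\LBc$-semistable kernel.

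The deflation half of \textquotedblleft{}$\supseteq$\textquotedblright{} is, however, a genuine gap, and you have correctly located but not closed it. You must show that $P^{\flat}=[t\:\sm{}g]^{-1}(0)^{\flat}$ is complete, and your two suggestions do not get there. First, deducing regularity of $P^{\flat}$ from Lemma \ref{LEM-W} is a dead end: upgrading regularity to completeness for an LB-space is precisely Question \ref{Q-G}, so no such upgrade is available. Second, the hoped-for \textquotedblleft{}isomorphism argument dual to the $5$-lemma computation underlying Lemma \ref{LEM-StaPa}(ii)\textquotedblright{} cannot be dualized naively, because the asymmetry is structural: cokernels in $\LBc$ are completions of quotients (so the $5$-lemma forces the completion map to be bijective, hence an isomorphism by the open mapping theorem), whereas kernels are $\flat$-topologized subspaces, and there is no analogous universal map from $P^{\flat}$ into a complete object whose bijectivity one could test. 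The paper instead argues via duality: writing $K=p_T^{-1}(0)$, it shows that the three natural topologies on $K$ (the $\flat$-topology, the subspace topology of $P^{\flat}$, and the subspace topology of $P=[t\:\sm{}g]^{-1}(0)$ with the topology induced from $T\oplus Y$) all yield the same dual $K'$, using that $(f,g)\in\Emax$, that $K^{\flat}\cong X$ as kernels of parallel maps in the pullback, and that well-locatedness of $f(X)$ transfers along the $\HDLCS$-pullback. By the Mackey--Arens theorem both subspace topologies are then admissible for the dual pair $(K,K')$, Robertson's closed-neighborhood condition transfers completeness from $(K,\tau_{P|K})$ (a closed subspace of the complete $P$) to $(K,\tau_{P^{\flat}|K})$, and finally the three-space property for completeness along the topologically exact sequence $(K,\tau_{P^{\flat}|K})\rightarrow P^{\flat}\rightarrow T$ yields completeness of $P^{\flat}$. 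Without an argument of this kind your proof of semistability of $g$ does not go through.
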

\begin{proof}By Proposition \ref{LBcKaroubi} and Theorem \ref{PROP-MAX} the above defined class $\Emaxcom$ is the maximal exact structure on $\LBc$. We have to show the equality.

\smallskip

\textquotedblleft{}$\subseteq$\textquotedblright{} In view of Proposition \ref{LBcKERCOK} and Lemma \ref{LEM-StaPa}(ii) we can show this ana\-logously to the corresponding inclusion in Theorem \ref{REG-EMB-EX}.

\smallskip

\textquotedblleft{}$\supseteq$\textquotedblright{}  Let $X\stackrel{\scriptscriptstyle f}{\rightarrow}Y\stackrel{\scriptscriptstyle g}{\rightarrow}Z$ be given with the properties stated. Then $(f,g)$ is a kernel-cokernel pair in $\LB$ and thus in $\LBc$ by Proposition \ref{LBcKERCOK}(iv). Given $t\colon X\rightarrow T$ in $\LBc$, the third property in the second class mentioned in the theorem guarantees that the $\LBc$-pushout of $f$ along $t$ exists and coincides with the $\LB$-pushout of $f$ along $t$. By applying Lemma \ref{AN-2a} in $\LB$, we get the following diagram, in which $c$ is the $\LB$-cokernel of $q_T$:
\begin{equation}\label{NEW}
\begin{tikzcd}
X\arrow{r}{f}\arrow{d}[swap]{t}\commutes[\mathrm{PO}_{\LB}]{dr} & Y \arrow{d}{q_Y}\arrow{r}{g}&Z\arrow[equals]{d}\\[4pt]
T \arrow{r}[swap]{q_T} & \frac{Y\oplus T}{\overline{\operatorname{ran}[f\:\sm\hspace{.5pt}t]^{\scriptscriptstyle\operatorname{T}}}}\arrow[dashed,swap]{r}{c} & Z.
\end{tikzcd}
\end{equation}
As $f(X)\subseteq Y$ is well-located by assumption, Remark \ref{RMK-LB-LAST}(ii) yields that $f$ is an $\LB$-semistable kernel. Consequently $q_T$ is a kernel in $\LB$ and thus $(q_T,c)\in\Call$ holds. As in the last row of \eqref{NEW} all spaces are complete and $j$ is surjective, Proposition \ref{LBcKERCOK}(iv) shows $(q_T,c)\in\Callcom$ meaning that $q_T$ is an $\LBc$-kernel. We thus have established that $f$ is an $\LBc$-semistable kernel.

\smallskip

Notice that we now already know that $(f,g)$ belongs to $\Emax$, as in $\LB$ all cokernels are semistable. It remains to show that $g$ is semistable in $\LBc$ as well. For this let $t\colon T\rightarrow Z$ in $\LBc$ be given. We claim that the space in the left upper corner of the $\LB$-pullback square
\begin{equation}\label{LB-PB-NEU}
\begin{tikzcd}
 {[t\:\sm{}\hspace{-3pt}g]^{-1}}(0)^{\flat}\arrow{r}{p_T}\arrow[swap]{d}{p_Y}\commutes[\mathrm{PB}_{\LB}]{dr} & T\arrow{d}{t} \\[0.6em]
 Y\arrow[]{r}[swap]{g} & Z
\end{tikzcd}
\end{equation}
is complete. For this purpose let $E_0\hookrightarrow E_1\hookrightarrow\cdots$ be a defining sequence of Banach spaces for the LB-space $T\oplus Y$ and let $(P,\tau_P):={[t\:\sm{}\hspace{-3pt}g]^{-1}}(0)\subseteq T\oplus Y$ carry the subspace topology\,---\,in which it is complete. We have to show that $P^{\hspace{1pt}\flat}=\mathop{\ind}_{n\in\NN} P\cap E_n$ is complete, too. We form the $\LB$-kernel of $p_T\colon P^{\hspace{1pt}\flat}\rightarrow T$. This is the inclusion map
$$
i\colon K^{\flat}=\mathop{\ind}_{n\in\NN}p_T^{-1}(0)\cap(P\cap E_n)=\mathop{\ind}_{n\in\NN}p_T^{-1}(0)\cap E_n \longrightarrow P^{\hspace{1pt}\flat}.
$$
As \eqref{LB-PB-NEU} is an $\LB$-pullback and $f=\ker g$, we get an isomorphism
\begin{equation}\label{NEW-2}
\begin{tikzcd}
K^{\flat}\arrow{r}{i}\arrow[swap]{d}{\sim} &P^{\hspace{1pt}\flat}\arrow{r}{p_T}\arrow[swap]{d}{p_Y}\commutes[\mathrm{PB}_{\LB}]{dr} & T\arrow{d}{t} \\[0.6em]
X\arrow[swap]{r}{f}& Y\arrow[]{r}[swap]{g} & Z
\end{tikzcd}
\end{equation}
as parallel maps in a pullback have isomorphic kernels. Using this and that $(f,g)\in\Emax$ holds, we obtain $(K^{\flat})'=X'=f(X)'$, where $f(X)$ carries the subspace topology of $Y$. Moreover, it holds $(i,p_T)\in\Emax$, and thus $(K^{\flat})'=i(K^{\flat})'=(K,\tau_{P^{\hspace{0.5pt}\flat}|K})'$, where $K=p_T^{-1}(0)$ carries the subspace topology of $P^{\hspace{1pt}\flat}$. Now we observe that the pullback of $g$ along $t$ in $\HDLCS$ is given by the right square in the next diagram, where $p_T$ and $p_Y$ are the same maps as above, but their domains carry now the subspace topology of $P=T\oplus Y$. We thus get another isomorphism of kernels, but now taken in $\HDLCS$, i.e.,
\begin{equation*}
\begin{tikzcd}
K\arrow{r}{i}\arrow[swap]{d}{\sim} &P\arrow{r}{p_T}\arrow[swap]{d}{p_Y}\commutes[\mathrm{PB}_{\HDLCS}]{dr} & T\arrow{d}{t} \\[0.6em]
f(X)\arrow[swap]{r}{j}& Y\arrow[]{r}[swap]{g} & Z
\end{tikzcd}
\end{equation*}
where $f(X)$ carries the subspace topology of $Y$, $K=p_T^{-1}(0)$ carries the subspace topology of $P$ and $j$ denotes the inclusion map. In particular it follows $f(X)'=(K,\tau_{P|K})'$. Putting everything together we see that all three topologies on $K$ lead to the same dual space, which we denote by
$$
K':=(K,\tau_{P^{\hspace{0.5pt}\flat}|K})'=i(K^{\flat})'=(K^{\flat})'=X'=f(X)'=(K,\tau_{P|K})'.
$$
We now look at the dual pair $(K,K')$, for which 
$$
\scalebox{0.98}{$\tau(K,K')=\tau(K,(K,\tau_{P^{\hspace{0.5pt}\flat}|K})')\supseteq\tau_{P^{\hspace{0.5pt}\flat}|K}\supseteq \tau_{P|K} \supseteq\sigma(K,(K,\tau_{P|K})')=\sigma(K,K')$}
$$
holds and thus both $\tau_{P^{\hspace{0.5pt}\flat}|K}$ and $\tau_{P|K}$ are $(K,K')$-admissible by \cite[Mackey-Arens thm 23.8]{MV}. The two topologies thus satisfy Robertson's closed neighborhood condition, that is $\tau_{P^{\hspace{0.5pt}\flat}|K}$ has a basis of $0$-neighborhoods consisting of $\tau_{P|K}$-closed sets; use e.g.~\cite[Rmk 23.9]{MV}. It follows that $(K,\tau_{P^{\flat}|K})$ is complete; use e.g.~\cite[Lem 23.13]{MV}. We thus obtain that
$$
(K,\tau_{P^{\flat}|K})\stackrel{i}{\longrightarrow}P^{\hspace{1pt}\flat}\stackrel{p_T}{\longrightarrow}T
$$
is a topologically exact sequence in $\HDLCS$ with $(K,\tau_{P^{\flat}|K})$ and $T$ being complete. By what we noted in the proof of Proposition \ref{LBcKaroubi}(ii) it follows that $P^{\hspace{1pt}\flat}$ is complete, too. Therefore we have established that the space $P^{\hspace{1pt}\flat}={[t\:\sm{}\hspace{-3pt}g]^{-1}}(0)^{\flat}$ in the left upper corner of \eqref{LB-PB-NEU} is a complete LB-space. It follows that \eqref{NEW-2} actually produces a kernel-cokernel pair $(i,p_T)$ in $\LBc$ and $g$ is in particular an $\LBc$-semistable cokernel. 
\end{proof}

\begin{thm}\label{LBc-Defl} On $\LBc$ the maximal (strongly) deflation-exact structure satisfies:
$$
\begin{aligned}
\Dcom&:=\bigl\{(f,g)\in\Callcom\:\big|\: g \text{ is } \LBc\text{-semistable}\hspace{1pt}\bigr\}\\
&\phantom{:}\subseteq\Cregsur\cap\LBc.
\end{aligned}
$$
\end{thm}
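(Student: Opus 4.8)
The statement has two parts: that $\Dcom$ as defined is the maximal (strongly) deflation-exact structure on $\LBc$, and that it is contained in $\Cregsur\cap\LBc$. For the first part the plan is to invoke Theorem \ref{PROP-MAX-D} verbatim. That theorem requires only that the ambient category be karoubian, which holds for $\LBc$ by Proposition \ref{LBcKaroubi}. Its conclusion is that the class of all kernel-cokernel pairs $(f,g)$ in which $g$ is a semistable cokernel is strongly deflation-exact and is the maximal deflation-exact structure; since this class is by definition exactly $\Dcom$, nothing further is needed here.

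For the inclusion I would argue as follows. Let $(f,g)\in\Dcom$, so that $g$ is an $\LBc$-semistable cokernel. The first step is to apply Lemma \ref{LEM-StaPa}(i), which forces $g$ to be surjective. The second step is to upgrade the pair: by Proposition \ref{LBcKERCOK}(iv), a kernel-cokernel pair in $\LBc$ whose deflation is surjective is automatically a kernel-cokernel pair in $\LB$, so $(f,g)\in\Call$. Finally, all three objects of $(f,g)$ are complete LB-spaces, hence regular by the forward implication of the hierarchy recalled in Section \ref{SEC-1}, so they lie in $\LBr$. Combining this with Theorem \ref{C-REG-SUR}, which identifies $\Cregsur=\Call\cap\LBr$, yields $(f,g)\in\Cregsur$; as the objects additionally lie in $\LBc$, we conclude $(f,g)\in\Cregsur\cap\LBc$.

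At the level of this theorem the assembly is routine: the genuine content is packaged into Lemma \ref{LEM-StaPa}(i) and Proposition \ref{LBcKERCOK}(iv), so I do not anticipate a real obstacle in deducing the inclusion itself. The point worth flagging is rather conceptual, namely why the inclusion should \emph{not} be expected to be an equality. Membership in $\Cregsur\cap\LBc$ requires only that $g$ be surjective, i.e.\ an $\LBr$-semistable cokernel by Proposition \ref{LBr-SUR}; by contrast $\Dcom$ demands the strictly stronger condition that $g$ be $\LBc$-semistable. Since cokernels, and hence the relevant pushouts, in $\LBc$ need not exist (LB-completions may fail to exist, cf.\ Proposition \ref{LBcKERCOK}(iii)), surjectivity of a deflation does not by itself guarantee its $\LBc$-semistability, and this is precisely the gap that obstructs the reverse inclusion.
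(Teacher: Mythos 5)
Your proposal is correct and follows exactly the paper's own argument: Theorem \ref{PROP-MAX-D} (via Proposition \ref{LBcKaroubi}) for maximality, Lemma \ref{LEM-StaPa}(i) for surjectivity of $g$, Proposition \ref{LBcKERCOK}(iv) to land in $\Call$, and Theorem \ref{C-REG-SUR} for the inclusion. Your closing remark on why equality is not to be expected matches the paper's subsequent comment that equality in this theorem is an open question.
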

\begin{proof} That $\Dcom$ as defined is the maximal deflation-exact structure on $\LBc$ and that it is strongly deflation exact  follows from Theorem \ref{PROP-MAX-D}. Given $(f,g)\in\Dcom$, the map $g$ is surjective by Lemma \ref{LEM-StaPa}(i). Proposition \ref{LBcKERCOK}(iv) yields that $(f,g)$ is a kernel-cokernel pair in $\LB$. The inclusion follows then from Theorem \ref{C-REG-SUR}.
\end{proof}

\begin{rmk} We conclude with the following remarks.\vspace{3pt}

\begin{compactitem}

\item[(i)] We have $\Emaxcom\subseteq\Emaxreg\cap\LBc$ and equality is equivalent to the condition in \cite[Dfn 2.4(i)]{DS12}; it is unknown if the latter holds.

\vspace{3pt}

\item[(ii)] It is unknown if in Theorem \ref{LBc-Defl} equality holds, that is, if in the description of $\Dcom$ one may replace `semistable' with `surjective'.

\vspace{3pt}

\item[(iii)] We have $\Etopcom\subset\Emaxcom\subseteq\Dcom$. That the first inclusion is strict follows from Example \ref{EX-Vogt-2}; if the second one is strict is unknown.
\diam{}
\end{compactitem}
\end{rmk}

\vspace{-3pt}

\section{Exact functors}\label{SEC-EX-FKT}

In the previous sections we have defined conflation structures on the categories of all/regular/\linebreak{}complete LB-spaces. This leads to the following diagram of exact functors. All horizontal arrows are the identity, all properties of subcategories/functors that we know are indicated in the diagram.
\begin{equation*}
\begin{tikzcd}[
hookarrow/.style={{Hooks[left]}->},
hookarrowr/.style={{Hooks[right]}->}, column sep =3.1em, row sep = 2.5em]
(\LB,\Etop)\arrow{r}{\stackrel{\stackrel{\text{doesn't}}{\text{reflect}}}{\scriptscriptstyle\text{exactness}}}& (\LB,\Emax)\arrow{r}{\stackrel{\stackrel{\text{doesn't}}{\text{reflect}}}{\scriptscriptstyle\text{exactness}}}  & (\LB,\Dmax)\\[9pt]
(\LBr,\Etopreg)\arrow{r}[swap]{\stackrel{\stackrel{\text{doesn't}}{\text{reflect}}}{\scriptscriptstyle\text{exactness}}}\arrow[hookarrow]{u}{\stackrel{\stackrel{\text{ext'closed}}{\text{and reflects}}}{\scriptscriptstyle\text{exactness}}} &  (\LBr,\Emaxreg)\arrow[hookarrow]{u}{\stackrel{\stackrel{\text{ext'closed}}{\text{and reflects}}}{\scriptscriptstyle\text{exactness}}}\arrow{r}[swap]{\stackrel{\stackrel{\text{doesn't}}{\text{reflect}}}{\scriptscriptstyle\text{exactness}}}  &(\LBr,\Cregsur)\arrow[hookarrow,swap]{u}{\stackrel{\text{reflects}}{\scriptscriptstyle\text{exactness}}}\\[9pt]
(\LBc,\Etopcom)\arrow[hookarrow]{u}{\stackrel{\stackrel{\text{ext'closed}}{\text{and reflects}}}{\scriptscriptstyle\text{exactness}}}\arrow{r}[swap]{\stackrel{\stackrel{\text{doesn't}}{\text{reflect}}}{\scriptscriptstyle\text{exactness}}}& (\LBc,\Emaxcom)\arrow[hookarrow,swap]{u}{}\arrow[swap]{r}{}  & (\LBc,\Dcom)\arrow[hookarrow,swap]{u}{}
\end{tikzcd}
\end{equation*}


\vspace{3pt}
\section{Acyclic complexes}\label{SEC-AC}

For an additive category $\mathcal{A}$ we denote by $\C(\mathcal{A})$ the category of cochain complexes. The following definition is standard, see \cite[Dfn 7.1]{BC13}, \cite[Dfn 2.14]{HKRW} for conflation categories and, e.g., \cite[Dfn 10.1]{Buehler} for exact categories.

\begin{dfn} Let $(\mathcal{A},\CC)$ be a conflation category. We say that a cochain complex $X^{\bullet}=(X^n,d^{\hspace{1pt}n})_{n\in\ZZ}$ is \emph{$\CC$-acyclic in degree $n$} if $d^{\hspace{1pt}n-1}$ factors as
\begin{equation*}
\begin{tikzcd}[column sep=1.5em, row sep=1.7em]
X^{n-1}\arrow{rr}{d^{\hspace{0.5pt}n-1}}\arrow{rd}[swap]{p^{\hspace{0.5pt}n-1}}& & X^n \\
&Z^{\hspace{1pt}n}\arrow{ru}[swap]{i^{\hspace{0.5pt}n-1}} & 
\end{tikzcd}
\end{equation*}
where $i^{\hspace{0.5pt}n-1}$ is an inflation and a kernel of $d^{\hspace{1pt}n}$ and $p^{\hspace{0.5pt}n-1}$ is a deflation and a cokernel of $d^{\hspace{1pt}n-2}$. The complex $X^{\bullet}$ is \emph{$\CC$-acyclic} if it is $\CC$-acyclic in every degree and we denote by $\Ac(\mathcal{A},\CC)\subseteq\C(\mathcal{A})$ the full subcategory of $\CC$-acyclic complexes.\diam{}
\end{dfn}

Below we first characterize the acyclic complexes of LB-spaces w.r.t.\ the three conflation structures that we considered in Section \ref{SEC-2a}. Then we will treat the subcategories of regular resp.~complete LB-spaces.

\begin{prop}\label{LB-ACYC} Let $X^{\bullet}\in\C(\LB)$. Then $X^{\bullet}$ is\vspace{2pt}
\begin{compactitem}

\item[(i)] $\Dmax$-acyclic iff $\;\forall\hspace{1.8pt}n\in\mathbb{Z}\colon \ran d^{\hspace{1pt}n-1}=(d^{\hspace{1pt}n})^{-1}(0)$ algebraically,\vspace{4pt}

\item[(ii)] $\Emax$-acyclic iff $\;\forall\hspace{1.8pt}n\in\mathbb{Z}\colon(\ran d^{\hspace{1pt}n-1})'=(d^{\hspace{1pt}n})^{-1}(0)^{\flat})'$ algebraically,\vspace{4pt}

\item[(iii)] $\Etop$-acyclic iff $\;\forall\hspace{1.5pt}n\in\mathbb{Z}\colon \ran d^{\hspace{1pt}n-1}=(d^{\hspace{1pt}n})^{-1}(0)^{\flat}$ topologically.
\end{compactitem}
\end{prop}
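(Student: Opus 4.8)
The plan is to unwind the definition of $\CC$-acyclicity degree by degree and translate the abstract factorization condition into the concrete descriptions of kernels, cokernels, and conflations in $\LB$ provided by Propositions~\ref{LB-PROP} and the explicit form of the three conflation structures in Theorem~\ref{LB-confl-str}. Fix $n\in\ZZ$ and recall that $X^{\bullet}$ is $\CC$-acyclic in degree $n$ exactly when $d^{\hspace{1pt}n-1}$ factors as $d^{\hspace{1pt}n-1}=i^{\hspace{0.5pt}n-1}\circ p^{\hspace{0.5pt}n-1}$ with $p^{\hspace{0.5pt}n-1}$ a deflation and a cokernel of $d^{\hspace{1pt}n-2}$, and $i^{\hspace{0.5pt}n-1}$ an inflation and a kernel of $d^{\hspace{1pt}n}$. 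Since all three of $\Etop\subset\Emax\subset\Dmax$ have the \emph{same} underlying kernel-cokernel pairs $\Call$, the factorization forces the same algebraic skeleton in every case: $i^{\hspace{0.5pt}n-1}$ being a kernel of $d^{\hspace{1pt}n}$ means, by Proposition~\ref{LB-PROP}(iii), that it is the inclusion $(d^{\hspace{1pt}n})^{-1}(0)^{\flat}\rightarrow X^n$, while $p^{\hspace{0.5pt}n-1}$ being a cokernel (hence surjective, Proposition~\ref{LB-PROP}(iv)) forces $\ran p^{\hspace{0.5pt}n-1}=(d^{\hspace{1pt}n})^{-1}(0)$ as a vector space. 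The object $Z^{\hspace{1pt}n}$ in the factorization is therefore, as a vector space, $(d^{\hspace{1pt}n})^{-1}(0)$, and acyclicity in degree $n$ immediately yields $\ran d^{\hspace{1pt}n-1}=(d^{\hspace{1pt}n})^{-1}(0)$ as vector spaces. This gives the forward direction of (i) and, conversely, when this vector-space equality holds one can \emph{define} $Z^{\hspace{1pt}n}:=(d^{\hspace{1pt}n})^{-1}(0)^{\flat}$ and check that the induced maps $p^{\hspace{0.5pt}n-1},i^{\hspace{0.5pt}n-1}$ are a deflation and an inflation for $\Dmax$; since in $(\LB,\Dmax)$ every surjection is a deflation and every kernel an admissible inflation (Theorem~\ref{LB-confl-str}), no further topological condition is needed, establishing (i).

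For (ii) and (iii) the skeleton is the same, so the content is purely topological: what extra condition does membership of the two halves of the factorization in $\Emax$, respectively $\Etop$, impose on the topology carried by $Z^{\hspace{1pt}n}$? The natural candidate for $Z^{\hspace{1pt}n}$ is the coimage/image object sitting between $X^{n-1}$ and $X^n$; concretely I would take $Z^{\hspace{1pt}n}=(d^{\hspace{1pt}n})^{-1}(0)^{\flat}$, so that $i^{\hspace{0.5pt}n-1}$ is the canonical inclusion of the kernel with its $\flat$-topology, and $p^{\hspace{0.5pt}n-1}\colon X^{n-1}\rightarrow Z^{\hspace{1pt}n}$ is the corestriction of $d^{\hspace{1pt}n-1}$. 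The point is then that $p^{\hspace{0.5pt}n-1}$ being a \emph{deflation} is automatic (it is a surjection of LB-spaces, hence a cokernel, and every cokernel is an $\Emax$- and $\Etop$-admissible deflation provided its kernel is also an admissible inflation), while the genuine restriction comes from requiring $i^{\hspace{0.5pt}n-1}$ to be an \emph{inflation}. By the explicit descriptions in Theorem~\ref{LB-confl-str}, the pair $(i^{\hspace{0.5pt}n-1},p^{\hspace{0.5pt}n})$ lies in $\Emax$ precisely when $(\ran i^{\hspace{0.5pt}n-1},\w)=((d^{\hspace{1pt}n})^{-1}(0)^{\flat},\w)$ as topological spaces, and in $\Etop$ precisely when $\ran i^{\hspace{0.5pt}n-1}=(d^{\hspace{1pt}n})^{-1}(0)^{\flat}$ as topological spaces. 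Unwinding $\ran i^{\hspace{0.5pt}n-1}=\ran d^{\hspace{1pt}n-1}$ (which carries the topology inherited from the factorization through $Z^{\hspace{1pt}n}$) then rewrites these exactly as the stated conditions $(\ran d^{\hspace{1pt}n-1},\w)=((d^{\hspace{1pt}n})^{-1}(0)^{\flat},\w)$ and $\ran d^{\hspace{1pt}n-1}=(d^{\hspace{1pt}n})^{-1}(0)^{\flat}$.

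The main obstacle I anticipate is the bookkeeping of \emph{which} topology the intermediate object $Z^{\hspace{1pt}n}$ and the range $\ran d^{\hspace{1pt}n-1}$ actually carry in the factorization, and the verification that the factorization condition in degree $n$ only involves $d^{\hspace{1pt}n-1}$ and $d^{\hspace{1pt}n}$ and not the neighbouring differentials in a way that would couple the degrees. Here one must be careful that in a deflation-exact or one-sided exact setting the inflation $i^{\hspace{0.5pt}n-1}$ must simultaneously be a kernel of $d^{\hspace{1pt}n}$ \emph{and} an inflation for the structure, and symmetrically $p^{\hspace{0.5pt}n-1}$ a cokernel of $d^{\hspace{1pt}n-2}$ \emph{and} a deflation; one should check, using the admissible-kernel property of $\Dmax$ and the characterizations of inflations in $\Emax$ and $\Etop$, that these two requirements are compatible and that the conditions decouple so that the whole complex is acyclic iff it is acyclic in each degree separately. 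Once this is settled, each of (i), (ii), (iii) follows by specializing the membership criterion of the relevant structure to the single conflation $(i^{\hspace{0.5pt}n-1},p^{\hspace{0.5pt}n})$, and no nontrivial functional-analytic input beyond Theorem~\ref{LB-confl-str} is required.
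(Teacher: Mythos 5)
Your overall strategy is exactly the paper's: unwind the factorization $d^{\hspace{1pt}n-1}=i^{\hspace{0.5pt}n-1}\circ p^{\hspace{0.5pt}n-1}$, identify $Z^{\hspace{1pt}n}$ with $(d^{\hspace{1pt}n})^{-1}(0)^{\flat}$ via Proposition~\ref{LB-PROP}, and read off the extra topological condition from the explicit descriptions of $\Emax$ and $\Etop$ in Theorem~\ref{LB-confl-str}. The forward directions are essentially complete as you state them. The gap is in the converse directions, and it sits precisely at the point you yourself flag as ``the main obstacle'' and then leave unresolved: you must verify that $p^{\hspace{0.5pt}n-1}\colon X^{n-1}\rightarrow(d^{\hspace{1pt}n})^{-1}(0)^{\flat}$ is not merely a deflation but a \emph{cokernel of} $d^{\hspace{1pt}n-2}$. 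This forces $(p^{\hspace{0.5pt}n-1})^{-1}(0)=(d^{\hspace{1pt}n-1})^{-1}(0)$ to equal $\overline{\ran d^{\hspace{1pt}n-2}}$, which is exactly the hypothesis in degree $n-1$. So the conditions do \emph{not} decouple in the way you hope: acyclicity in degree $n$ in the converse direction uses the stated equality in degrees $n$ \emph{and} $n-1$ (the paper remarks on this explicitly), and it is only because the statement quantifies over all $n$ that the proposition follows. The paper fills this step by observing that the algebraic equality forces $d^{\hspace{1pt}n-1}$ to have closed range, hence to be strict, so that $(\overline{d^{\hspace{1pt}n-1}})^{-1}\circ p^{\hspace{0.5pt}n-1}$ is the quotient map $X^{n-1}\rightarrow X^{n-1}/\ran d^{\hspace{1pt}n-2}$, i.e.\ a cokernel of $d^{\hspace{1pt}n-2}$.

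The same issue recurs in (ii) and (iii): your claim that ``$p^{\hspace{0.5pt}n-1}$ being a deflation is automatic'' is not correct as stated. For $\EE\in\{\Emax,\Etop\}$ one needs $(i^{\hspace{0.5pt}n-2},p^{\hspace{0.5pt}n-1})\in\EE$, i.e.\ that the kernel $\ker d^{\hspace{1pt}n-1}\rightarrow X^{n-1}$ of $p^{\hspace{0.5pt}n-1}$ satisfies the well-located (resp.\ limit subspace) condition, and this is supplied by the topological equality in degree $n-1$, not by anything automatic. Once these two verifications are inserted, your argument coincides with the paper's proof.
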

\begin{proof}\textcircled{1} If $X^{\bullet}$ is $\Dmax$-acyclic in degree $n$, then $d^{\hspace{1pt}n-1}$ is \emph{strict}, meaning that the parallel of $d^{\hspace{1pt}n-1}$ is an isomorphism; indeed $p^{\hspace{0.5pt}n-1}$ is the coimage and $i^{\hspace{0.5pt}n-1}$ is the image of $d^{\hspace{1pt}n-1}$. Consequently, the image of $d^{\hspace{1pt}n-1}$ is the inclusion $d^{\hspace{1pt}n-1}(X^{n-1})^{\flat}\hookrightarrow X^n$. On the other hand, the map $(d^{\hspace{1pt}n})^{-1}(0)^{\flat}\hookrightarrow X^n$ is also an image of $d^{\hspace{1pt}n-1}$. The inclusion map $d^{\hspace{1pt}n-1}(X^{\hspace{1pt}n-1})^{\flat}\rightarrow(d^{\hspace{1pt}n})^{-1}(0)^{\flat}$ must thus be an isomorphism and we get in particular the equality of sets $d^{\hspace{1pt}n-1}(X^{\hspace{0.5pt}n-1})=(d^{\hspace{1pt}n})^{-1}(0)$. This shows \textquotedblleft{}$\Longrightarrow$\textquotedblright{} in (i) and for (ii) and (iii) it is enough to observe that $i^{\hspace{0.5pt}n-1}$ being an $\Emax$- or $\Etop$-inflation implies that $\ran i^{\hspace{0.5pt}n-1}=(d^{\hspace{1pt}n})^{-1}(0)\subseteq X^n$ is well-located, respectively a limit subspace, which yields the corresponding equality.  

\medskip

\textcircled{2} For \textquotedblleft{}$\Longleftarrow$\textquotedblright{} we see that $i^{\hspace{0.5pt}n-1}$, as a kernel, is always a $\Dmax$-inflation. We have to show that $p^{\hspace{0.5pt}n-1}\colon X^{n-1}\rightarrow\ker d^{\hspace{1pt}n}=(d^{\hspace{1pt}n})^{-1}(0)^{\flat}$, $x\mapsto d^{\hspace{1pt}n-1}(x)$ is the cokernel of $d^{\hspace{1pt}n-2}$. By construction $p^{\hspace{0.5pt}n-1}\circ d^{\hspace{1pt}n-2}=0$. Let $c\colon X^{\hspace{1pt}n-1}\rightarrow C$ be such that $c\circ d^{\hspace{1pt}n-2}=0$. Observe that the algebraic equality $\ran d^{\hspace{1pt}n-1}=(d^{\hspace{1pt}n})^{-1}(0)$ implies that $d^{\hspace{1pt}n-1}$ has a closed range and is thus strict. We get
\begin{equation*}
\adjustbox{scale=1,center}{\begin{tikzcd}[column sep=0.7em]
 & &&&& (d^{\hspace{1pt}n})^{-1}(0)^{\flat}\arrow[r,equal]  & (\ran d^{\hspace{1pt}n-1})^{\flat}\arrow{rrrrr}{(\overline{d^{\hspace{0.5pt}n-1}})^{-1}}\arrow{rrrrr}[swap]{\sim} &&&&& \zfrac{X^{n-1}}{(d^{\hspace{0.5pt}n-1})^{-1}(0)}\arrow[r,equal]&\zfrac{X^{n-1}}{\ran d^{\hspace{0.5pt}n-2}}\arrow[llllllldd, dashed, bend left=15, "h"] \\[-5pt]
  X^{\hspace{0.5pt}n-2}\arrow{rrrr}{d^{\hspace{0.5pt}n-2}}\arrow[bend left =25]{rrrrru}{0}\arrow[bend right =25]{rrrrrd}[swap]{0} & && & X^{\hspace{1pt}n-1}\arrow{ur}{p^{\hspace{0.5pt}n-1}}\arrow{dr}[swap]{c} & &\\[-5pt]
  &  &&&&  C &
\end{tikzcd}}
\end{equation*}
as $(\overline{d^{\hspace{1pt}n-1}})^{-1}\circ p^{\hspace{0.5pt}n-1}$ equals the quotient map $X^{\hspace{0.5pt}n-1}\rightarrow X^{\hspace{0.5pt}n-1}/(d^{\hspace{1pt}n-1})^{-1}(0)$ and is thus a cokernel of $d^{\hspace{1pt}n-2}$. This finishes part (i). Notice that for this direction of the equivalence, we used the equalities in degree $n$ and $n-1$ to conclude acyclicity in degree $n$.

\smallskip

\textcircled{3} It remains to show \textquotedblleft{}$\Longleftarrow$\textquotedblright{} for (ii) and (iii). For this purpose let $\EE\in\{\Emax,\Etop\}$ and let $n\in\ZZ$ be fixed. Notice first that $(i^{\hspace{0.5pt}n-1},\cok i^{\hspace{0.5pt}n-1})\in\Dmax$ holds with $\ran i^{\hspace{0.5pt}n-1}=(d^{\hspace{1pt}n})^{-1}(0)$ algebraically. Having the corresponding topological equality in degree $n$ thus yields $(i^{\hspace{0.5pt}n-1},\cok i^{\hspace{0.5pt}n-1})\in\EE$ meaning that $i^{\hspace{0.5pt}n-1}$ is an $\EE$-inflation. To see that $p^{\hspace{0.5pt}n-1}$ is a deflation observe that $i^{\hspace{0.5pt}n-2}\colon\ker d^{\hspace{1pt}n-1}\rightarrow X^{n-1}$ is a kernel of $p^{\hspace{0.5pt}n-1}$. Using the (algebraic and topological) equality in degree $n-1$ we get that $(i^{\hspace{0.5pt}n-2},p^{\hspace{0.5pt}n-1})\in\EE$ holds.
\end{proof}

Observe that in general $\im d^{\hspace{1pt}n-1}=\overline{d^{\hspace{1pt}n-1}(X^n)}^{\flat}$ holds. The classical de\-fi\-nition for exactness, i.e., image\hspace{1pt}=\hspace{1pt}kernel in every degree is thus weaker than $\Dmax$-acyclicity. The other natural ad hoc definition, namely range\hspace{1pt}=\hspace{1pt}kernel in every degree is precisely $\Dmax$-acyclicity. The ranges of the differentials are then necessarily closed and $\Emax$- resp.~$\Etop$-acyclicity is an additional relative openness condition. As mentioned in the proof, in Proposition \ref{LB-ACYC} the implications \textquotedblleft{}$\Longrightarrow$\textquotedblright{} holds degree-wise, while the converse directions do not.

\begin{prop}\label{PROP-AC} We have\vspace{1pt}
\begin{compactitem}

\item[(i)] $\Ac(\LBr,\Cregsur)=\Ac(\LB,\Dmax)\cap\C(\LBr)$,

\vspace{3pt}

\item[(ii)] $\Ac(\LBr,\Emaxreg)=\Ac(\LB,\Emax)\cap\C(\LBr)$,

\vspace{3pt}

\item[(iii)]$\Ac(\LBr,\Etopreg)=\Ac(\LB,\Etop)\cap\C(\LBr)$,

\vspace{3pt}

\item[(iv)] $\Ac(\LBc,\Etopcom)=\Ac(\LB,\Etop)\cap\C(\LBc)$.
\end{compactitem}
\end{prop}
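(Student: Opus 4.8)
The plan is to reduce each of the four statements to the differential-level characterisations of Proposition \ref{LB-ACYC}. In all four parts the structure on the subcategory is by construction the restriction of the ambient one, namely $\Cregsur=\Call\cap\LBr$, $\Emaxreg=\Emax\cap\LBr$, $\Etopreg=\Etop\cap\LBr$ and $\Etopcom=\Etop\cap\LBc$. Hence the inclusions \textquotedblleft$\subseteq$\textquotedblright{} are immediate: a conflation belonging to the restricted class is in particular a kernel--cokernel pair of the ambient class, so the factorisation witnessing acyclicity for the restricted structure witnesses it for the ambient one as well, giving $X^{\bullet}\in\Ac(\LB,\EE)\cap\C(\mathcal{B})$ for the relevant $\EE$ and $\mathcal{B}$. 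All the content therefore lies in the reverse inclusions, and for these the task is to show that the cycle objects $Z^{\hspace{1pt}n}:=(d^{\hspace{1pt}n})^{-1}(0)^{\flat}=\ker d^{\hspace{1pt}n}$ remain in the relevant subcategory.

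First I would dispatch the three parts (ii)--(iv) in which the subcategory is \emph{extension closed} in the ambient exact category: $(\LBr,\Emaxreg)$ and $(\LBr,\Etopreg)$ by Theorems \ref{REG-EMB-EX} and \ref{ETOP-REG}, and $(\LBc,\Etopcom)$ by the three-space property for completeness along topologically exact sequences \cite{DR81}. The mechanism is uniform: for a fully exact subcategory $\mathcal{B}\subseteq(\mathcal{A},\EE)$, an $\EE$-acyclic complex $X^{\bullet}\in\C(\mathcal{B})$ is automatically $(\EE\cap\mathcal{B})$-acyclic \emph{as soon as} each $Z^{\hspace{1pt}n}\in\mathcal{B}$, since the $\EE$-conflations $Z^{\hspace{1pt}n}\rightarrowtail X^{\hspace{1pt}n}\twoheadrightarrow Z^{\hspace{1pt}n+1}$ then have all terms in $\mathcal{B}$ and thus lie in $\EE\cap\mathcal{B}$, cf.~\cite[Lem 10.20]{Buehler}. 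For the two regular cases (ii) and (iii) the membership $Z^{\hspace{1pt}n}\in\LBr$ is automatic by Remark \ref{Stand-RES}, as regularity passes to closed subspaces in the $\flat$-topology.

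The completeness case (iv) is the one requiring care, and it is where the a priori obstacle sits: it is \emph{unknown} whether the $\flat$-topology of a closed subspace of a complete LB-space is complete (obstacle 3 of Section \ref{SEC-COM}), so there is no reason in general for $(d^{\hspace{1pt}n})^{-1}(0)^{\flat}$ to be complete. The resolution is that $\Etop$-acyclicity is strictly stronger: by Proposition \ref{LB-ACYC}(iii) the inclusion $i^{\hspace{1pt}n-1}$ is an $\Etop$-inflation, so $(d^{\hspace{1pt}n})^{-1}(0)$ is a \emph{limit subspace} of $X^{\hspace{1pt}n}$, i.e.\ its $\flat$-topology coincides with the induced subspace topology. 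Consequently $Z^{\hspace{1pt}n}$ is a closed subspace of the complete space $X^{\hspace{1pt}n}$ carrying its own subspace topology, hence complete, so $Z^{\hspace{1pt}n}\in\LBc$; the same remark shows that the $\LBc$-kernel of $d^{\hspace{1pt}n}$ genuinely exists, as demanded by Proposition \ref{LBcKERCOK}(ii). This circumvents obstacle 3 entirely and finishes (iv).

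Finally, part (i) falls outside the extension-closed principle, since $(\LBr,\Cregsur)$ is only deflation-exact and, lacking admissible kernels, may contain $\LBr$-kernels that are \emph{not} $\Cregsur$-inflations; here an extra strictness argument is needed. Given a $\Dmax$-acyclic $X^{\bullet}\in\C(\LBr)$, Proposition \ref{LB-ACYC}(i) yields $\ran d^{\hspace{1pt}n-1}=(d^{\hspace{1pt}n})^{-1}(0)$ and that each $d^{\hspace{1pt}n}$ is strict, so the parallel gives an isomorphism $X^{\hspace{1pt}n}/(d^{\hspace{1pt}n})^{-1}(0)\cong Z^{\hspace{1pt}n+1}$. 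As $Z^{\hspace{1pt}n+1}$ is regular (Remark \ref{Stand-RES}), the $\LB$-cokernel of $i^{\hspace{1pt}n-1}$ is regular, whence $(i^{\hspace{1pt}n-1},\cok i^{\hspace{1pt}n-1})\in\Call\cap\LBr=\Cregsur$ and $i^{\hspace{1pt}n-1}$ is a genuine $\Cregsur$-inflation. Surjectivity of $p^{\hspace{1pt}n-1}$ (forced by the algebraic equality in degree $n$) makes it a semistable, hence $\Cregsur$-, deflation, and the equality in degree $n-1$ identifies it as the cokernel of $d^{\hspace{1pt}n-2}$ (Propositions \ref{LBr-SUR} and \ref{LBrKERCOK}(iv)). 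I expect the main obstacles to be exactly this interplay of strictness and regularity of quotients in (i) together with the limit-subspace observation in (iv); once both are in place, the reverse inclusions follow formally from Proposition \ref{LB-ACYC}.
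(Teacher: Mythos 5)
Your proposal is correct and follows essentially the same route as the paper: both reduce the reverse inclusions to checking that the cycle objects $Z^{\hspace{1pt}n}=(d^{\hspace{1pt}n})^{-1}(0)^{\flat}$ remain in the subcategory (via Remark \ref{Stand-RES} for $\LBr$ and via the limit-subspace property forced by $\Etop$-acyclicity for $\LBc$), after which membership of the conflations in the restricted structure is automatic. The only difference is presentational: where the paper invokes the ``reflects exactness'' entries of diagram \eqref{EXACT-FUN} from Section \ref{SEC-EX-FKT}, you verify the same facts directly, in particular spelling out for (i) why strictness of the differentials makes the $\LB$- and $\LBr$-cokernels coincide.
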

\begin{proof} The inclusion \textquotedblleft{}$\subseteq$\textquotedblright{} holds in all four cases in view of the exactness of the inclusion functors, see Section \ref{SEC-EX-FKT}. It remains to show \textquotedblleft{}$\supseteq$\textquotedblright{}.

\smallskip

(i) Let $X^{\bullet}\in\C(\LBr)$ be an $\Dmax$-acyclic complex, i.e.,
\begin{equation*}
\adjustbox{scale=1,center}{\begin{tikzcd}[column sep=1.3em, row sep=1.3em]
&&Z^{\hspace{0.5pt}n-1}\arrow{rd}{i^{\hspace{0.5pt}n-2}}& & & & Z^{\hspace{0.5pt}n+1}\arrow{rd}{i^{\hspace{0.5pt}n}} &  \\
\cdots\arrow{r}&X^{\hspace{0.5pt}n-2}\arrow{ru}{p^{\hspace{0.5pt}n-2}}\arrow{rr}[swap]{d^{\hspace{1pt}n-2}}&&X^{\hspace{0.5pt}n-1}\arrow{rr}[swap]{d^{\hspace{0.5pt}n-1}}\arrow{rd}[swap]{p^{\hspace{0.5pt}n-1}}& & X^{\hspace{0.5pt}n}\arrow{ru}{p^{\hspace{0.5pt}n}}\arrow{rr}{d^{\hspace{0.5pt}n}} & & X^{\hspace{0.5pt}n+1} \arrow{r}\arrow[swap]{rd}{p^{n+1}} &\cdots \\
\phantom{X}\arrow{ru}[swap]{i^{\hspace{0.5pt}n-3}}&&\phantom{X}&&Z^{\hspace{0.5pt}n}\arrow{ru}[swap]{i^{\hspace{0.5pt}n-1}} & & & & \phantom{X}
\end{tikzcd}}
\end{equation*}
with $(i^{\hspace{0.5pt}n-1},p^{\hspace{0.5pt}n})\in\Dmax$ for all $n\in\ZZ$ and all the $X^n$ are regular LB-spaces. By Proposition \ref{LBrKERCOK} the kernel objects $Z^{\hspace{0.5pt}n}=\ker d^{\hspace{1pt}n}$ are then regular, too, and as $(\LBr,\Cregsur)\hookrightarrow(\LB,\Dmax)$ reflects exactness, we get $(i^{\hspace{0.5pt}n-1},p^{\hspace{0.5pt}n})\in\Cregsur$ for all $n\in\ZZ$.

\smallskip

(ii) \& (iii) We may argue as in (i) using that both of the inclusions $(\LBr,\Emaxreg)\hookrightarrow(\LB,\Emax)$ and $(\LBr,\Etopreg)\hookrightarrow(\LB,\Etop)$ also reflect exactness.
\smallskip

(iv) If $X^{\bullet}\in\LBc$ and $(i^{\hspace{0.5pt}n-1},p^{\hspace{0.5pt}n})\in\Etop$ holds for all $n\in\ZZ$, then the kernel objects are endowed with the subspace topology and thus complete LB-spaces. Now we may proceed as above using that the inclusion $(\LBr,\Etopcom)\hookrightarrow(\LB,\Etop)$ reflects exactness.
\end{proof}

\vspace{1pt}

\section{Derived Categories}\label{SEC-7}

The derived category of an exact category was initially  defined by Neeman \cite{Nee90}, while the one-sided case was treated for the first time by Bazzoni, Crivei \cite[Section 7]{BC13}. Their investigation has recently been extended by Henrard et al.\ \cite{ HKR, HKRW, HR19, HR19a, HR20, HR24}. As notation varies over the latter sources, and often a more general setting as needed for our purposes is considered, we will first give a brief introduction to the derived category under the following from now on standing assumption.

\begin{ass}\label{ASS} For the rest of the article we will always assume that $(\mathcal{A},\CC)$ is a strongly deflation-exact and karoubian category.\diam{}
\end{ass}

Let $\K(\mathcal{A})$ be the homotopy category furnished with the triangulated structure induced by the mapping cone triangles. For $*\in\{+,-,\text{b}\}$ let $\K^{\boldsymbol{\ast}}(\mathcal{A})$ be the full triangulated subcategory of $\K(\mathcal{A})$ formed by complexes that are homotopic to left bounded, right bounded and bounded complexes, respectively. Note that, in general, $\Ac(\mathcal{A},\CC)\subseteq\K(\mathcal{A})$ might not be closed under isomorphisms, but that under our standing Assumption \ref{ASS} it is even a thick triangulated subcategory, see \cite[Prop 3.11]{HR19}, and thus it is in particular replete. For $*\in\{+,-,\text{b}\}$ we put $\Ac^{\boldsymbol{\ast}}(\mathcal{A},\CC):=\Ac(\mathcal{A},\CC)\cap\K^{\boldsymbol{\ast}}(\mathcal{A})$ which is a thick subcategory of $\K^{\boldsymbol{\ast}}(\mathcal{A})$ as well.

\smallskip

Now we can define the derived category; for details on localizations and triangulated categories we refer to \cite{GZ, Krause, Mili,  Neeman, Verdier}.

\begin{dfn}\label{DFN-DC} Let $(\mathcal{A},\CC)$ be a strongly deflation-exact karoubian category and let $*\in\{+,-,\text{b},\emptyset\}$. We define the derived category of $(\mathcal{A},\CC)$ by the Verdier quotient
$$
\DD^{\boldsymbol{\ast}}(\mathcal{A},\CC)=\K^{\boldsymbol{\ast}}(\mathcal{A})/\Ac^{\boldsymbol{\ast}}(\mathcal{A},\CC)=\K^{\boldsymbol{\ast}}(\mathcal{A})[\mathcal{N}^{\hspace{1pt}*}(\mathcal{A},\CC)^{-1}],
$$
where $\mathcal{N}^{\hspace{1pt}*}(\mathcal{A},\CC):=\bigl\{f^{\bullet}\colon X^{\bullet}\rightarrow Y^{\bullet}\:\big|\: \cone(f^{\bullet})\in\Ac^{\boldsymbol{\ast}}(\mathcal{A},\CC)\bigr\}$ is the system of quasi-isomorphisms. We furnish the derived category with the natural triangulated structure.\diam{}
\end{dfn}

Below we summarize what is standard in the abelian case and remains valid under the assumptions on $(\mathcal{A},\CC)$ that we made above.

\begin{rmk}\label{RMK-DC}\cite[Prop 3.20, Prop 6.2, Rmk 3.5(2) and Prop 6.3]{HR19} Let $(\mathcal{A},\CC)$ be a strongly deflation-exact karoubian category and $*\in\{+,-,\text{b},\emptyset\}$.\vspace{3pt}
\begin{compactitem}

\item[(i)] The natural functor $i\colon\mathcal{A}\rightarrow\DD^{\boldsymbol{\ast}}(\mathcal{A},\CC)$, that maps $X\in\mathcal{A}$ to the stalk complex with $X$ in degree zero, is fully faithful.

\vspace{3pt}

\item[(ii)] A sequence $X\rightarrow Y\rightarrow Z$ in $\mathcal{A}$ is a conflation iff $i(X)\rightarrow i(Y)\rightarrow i(Z)\rightarrow i(X)[1]$ is a triangle in $\DD^{\boldsymbol{\ast}}(\mathcal{A},\CC)$.

\vspace{3pt}

\item[(iii)] A sequence $X\rightarrow Y\rightarrow Z$ in $\mathcal{A}$ is a conflation iff the complex $\cdots\rightarrow 0\rightarrow X\rightarrow Y\rightarrow Z\rightarrow 0\rightarrow\cdots$ is $\CC$-acyclic.

\vspace{3pt}

\item[(iv)] If $(\mathcal{A},\CC)$ is exact, then $i(\mathcal{A})\subseteq\DD^{\boldsymbol{\ast}}(\mathcal{A},\CC)$ is extension closed.\diam{}
\end{compactitem}
\end{rmk}


From our previous sections we  obtain, for $*\in\{+,-,\text{b},\emptyset\}$, the following natural functors on the derived level:

\vspace{5pt}

\adjustbox{scale=0.925,center}{\begin{tikzcd}[column sep =1.5em]
\DD^{\boldsymbol{\ast}}(\LB,\Etop)\arrow{r}{\not\sim}\arrow{r}[swap]{I_1^*}& \DD^{\boldsymbol{\ast}}(\LB,\Emax)\arrow{r}{\not\sim}\arrow{r}[swap]{I_2^*}  & \DD^{\boldsymbol{\ast}}(\LB,\Dmax)\\[9pt]
\DD^{\boldsymbol{\ast}}(\LBr,\Etopreg)\arrow{u}{G^*_{\mathsf{top}}}\arrow{r}{\not\sim}\arrow{r}[swap]{I_3^*} & \DD^{\boldsymbol{\ast}}(\LBr,\Emaxreg)\arrow{r}{\not\sim}\arrow{r}[swap]{I_4^*}\arrow[swap]{u}{G^*_{\mathsf{max}}} &\DD^{\boldsymbol{\ast}}(\LBr,\Cregsur)\arrow[swap]{u}{G^*_{\mathsf{def}}}\\[9pt]
\DD^{\boldsymbol{\ast}}(\LBc,\Etopcom)
\arrow{u}{F^*_{\mathsf{top}}}\arrow{r}{\not\sim}\arrow[swap]{r}{I_5^*}& \DD^{\boldsymbol{\ast}}(\LBc,\Emaxcom)
\arrow[swap]{u}{F^*_{\mathsf{max}}}
\arrow[swap]{r}{I_6^*}  & \DD^{\boldsymbol{\ast}}(\LBc,\Dcom).\arrow[swap]{u}{F^*_{\mathsf{def}}}
\end{tikzcd}}
\begin{picture}(0,0)(0,0)
\put(0,61.3){\begin{minipage}{447.5pt}
\begin{equation}\label{EXACT-FUN-DER}
\end{equation}
\end{minipage}}
\end{picture}

\vspace{1pt}

The non-equivalences follow from the next result, whose fairly standard proof we include for the sake of completeness, , cf.\ also Remark \ref{RMK-NEQUIV}. The vertical functors between the bottom rows being well-defined yield the desired homologification of Question \ref{Q-G} that was stated in the introduction.

\begin{prop}\label{VRT-UNF} Let $*\in\{+,-,\text{b},\emptyset\}$. The functors $I_k^*$ for $k=1,\dots,5$ in \eqref{EXACT-FUN-DER} are neither full nor faithful.
\end{prop}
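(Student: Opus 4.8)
The plan is to treat all five functors uniformly. Each $I_k^*$ is induced by the identity on the underlying category and passes from a finer conflation structure $\CC_1$ to a strictly coarser one $\CC_2$ (namely $\Etop\subset\Emax\subset\Dmax$ on $\LB$, and the analogous nested pairs on $\LBr$ and $\LBc$), so that $\Ac^*(\mathcal A,\CC_1)\subseteq\Ac^*(\mathcal A,\CC_2)$ and $I_k^*$ is the induced Verdier localization. In every case the finer structure $\CC_1$ is \emph{exact}, while the coarser one $\CC_2$ is exact for $k\in\{1,3,5\}$ and only (strongly) deflation-exact for $k\in\{2,4\}$. The starting point is to record, for each $k$, one conflation $(f,g)\in\CC_2\setminus\CC_1$ with end terms $X,Z$; these exist because all inclusions are strict, the witnesses being the co-echelon resolutions of Examples \ref{EX-Vogt-2} and \ref{EX-LBr-2}, the inclusion of a non-well-located subspace from Example \ref{EX-GROTH}(i) and Remark \ref{PAR-1}, and the strictness $\Etopcom\subset\Emaxcom$ noted in the remark after Theorem \ref{LBc-Defl}.

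For \emph{non-faithfulness} I would use such a pair directly. Reading $(f,g)$ as the two-term complex $C^\bullet=(0\to X\to Y\to Z\to 0)$, Remark \ref{RMK-DC}(iii) shows that $C^\bullet$ is $\CC_2$-acyclic but, since $(f,g)\notin\CC_1$, not $\CC_1$-acyclic. Hence $C^\bullet\cong 0$ in $\DD^*(\mathcal A,\CC_2)$ while $C^\bullet\not\cong 0$ in $\DD^*(\mathcal A,\CC_1)$. Consequently $\id_{C^\bullet}$ and $0$ are distinct endomorphisms of $C^\bullet$ in the source, yet $I_k^*(\id_{C^\bullet})=\id_{I_k^*C^\bullet}=0=I_k^*(0)$, so $I_k^*$ is not faithful.

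For \emph{non-fullness} I would compare the groups $\Hom_{\DD^*(\mathcal A,\CC)}(Z,X[1])$. Under the identification of this group with the extension group $\Ext^1_{(\mathcal A,\CC)}(Z,X)$ realized by conflations (classical for the exact $\CC_1$, and available from \cite{BC13,HR19} for the deflation-exact $\CC_2$), the map induced by $I_k^*$ is the natural one $\Ext^1_{\CC_1}(Z,X)\to\Ext^1_{\CC_2}(Z,X)$ coming from $\CC_1\subseteq\CC_2$, and I claim the class $[(f,g)]$ is not in its image. Indeed, every element of the image is represented by a $\CC_1$-conflation $X\xrightarrow{a}W\xrightarrow{b}Z$; if $[(f,g)]=[(a,b)]$ there is a morphism $\phi\colon Y\to W$ in the underlying category with $\phi f=a$ and $b\phi=g$. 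Both rows are algebraically exact with surjective cokernels (Proposition \ref{LB-PROP}(v) via the identifications of the respective structures with $\LB$-kernel-cokernel pairs), so the five lemma for vector spaces makes $\phi$ bijective; as $Y$ and $W$ are LB-spaces, the open mapping theorem forces $\phi$ to be an isomorphism. Then $(f,g)\cong(a,b)$ as conflations, and since conflation classes are closed under isomorphism this gives $(f,g)\in\CC_1$, contradicting the choice of $(f,g)$. Hence $I_k^*$ is not full.

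The routine parts are the acyclicity bookkeeping and the examples. The hard part will be the non-fullness step for $k\in\{2,4\}$, where $\CC_2\in\{\Dmax,\Cregsur\}$ is only one-sided exact: there I must ensure that the realization $\Hom_{\DD^*}(Z,X[1])\cong\Ext^1$ and the description of the extension relation by \emph{genuine} conflation morphisms remain valid in the deflation-exact setting, which is delicate for $\Cregsur$ because it lacks admissible kernels (cf.\ the remark after Theorem \ref{C-REG-SUR}). The key is that the open mapping theorem for LB-spaces lets me conclude that each such $\phi$ is an isomorphism without invoking an abstract short five lemma, so the argument survives the absence of admissible kernels.
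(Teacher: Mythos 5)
Your non-faithfulness argument is sound and essentially coincides with the paper's: both rest on picking a conflation $(f,g)\in\CC_2\setminus\CC_1$ (your witnesses are the right ones), observing that the associated three-term complex is $\CC_2$-acyclic but lies outside the \emph{thick} subcategory $\Ac^{\boldsymbol{\ast}}(\mathcal{A},\CC_1)$, and concluding that its identity is a nonzero morphism sent to zero.

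The non-fullness half diverges from the paper and contains a genuine gap. The paper completes the connecting morphism $u\colon Z\rightarrow X[1]$ to a triangle in $\DD^{\boldsymbol{\ast}}(\mathcal{A},\BB)$, uses extension-closedness of the stalk complexes (Remark \ref{RMK-DC}(iv) -- this is exactly where exactness of the \emph{source} structure enters) to replace the cone by an object of $\mathcal{A}$, and then applies TR3 and full faithfulness of the stalk embedding. Your route goes through $\Hom_{\DD^{\boldsymbol{\ast}}}(Z,X[1])\cong\Ext^1$, which the paper itself (Remark \ref{RMK-NEQUIV}(ii)) endorses only for the functors between exact categories. For $k\in\{2,4\}$ the targets $\Dmax$ and $\Cregsur$ are merely deflation-exact, and you assert rather than prove the two facts your argument hinges on: (i) that every element of $\Hom_{\DD^{\boldsymbol{\ast}}(\mathcal{A},\CC_1)}(Z,X[1])$ is the connecting morphism of an actual $\CC_1$-conflation (for $*\neq\text{b}$ this already requires comparing $\DD^{\bb}$ with $\DD^{\boldsymbol{\ast}}$, or a direct triangle argument); and (ii) that equality of two conflation classes in $\Hom_{\DD^{\boldsymbol{\ast}}(\mathcal{A},\CC_2)}(Z,X[1])$ yields a \emph{direct} comparison morphism $\phi\colon Y\rightarrow W$. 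Your announced key -- replacing the short five lemma by the open mapping theorem -- addresses the wrong difficulty: once $\phi$ exists it is an isomorphism by soft means, and the real problem is producing $\phi$ in the one-sided setting. Both points can be repaired, but the repair (complete $u$ to a triangle, invoke Remark \ref{RMK-DC}(ii) and (iv), apply TR3, and descend the middle isomorphism via Remark \ref{RMK-DC}(i)) is precisely the paper's triangle argument; so either carry that out explicitly, or restrict the Ext-route to $k\in\{1,3,5\}$ and treat $k\in\{2,4\}$ as the paper does.
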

\begin{proof} In all five cases we have already seen that the conflation structure on the left is a proper subclass of the conflation structure on the right. We establish that for any karoubian category $\mathcal{A}$ with a strongly deflation-exact structure $\CC$ and an exact structure $\BB\subset\CC$, the induced functor $\DD^{\boldsymbol{\ast}}(\mathcal{A},\BB)\rightarrow \DD^{\boldsymbol{\ast}}(\mathcal{A},\CC)$ can neither be full nor faithful. We begin by picking a conflation $X\stackrel{\scriptscriptstyle f}{\rightarrow}Y\stackrel{\scriptscriptstyle g}{\rightarrow}Z$ in $\CC\backslash\BB$.

\smallskip

\textcircled{1} We extend $(f,g)$ with zeros and use Remark \ref{RMK-DC}(iii) to get a complex $U^{\bullet}$ which is $\CC$-acyclic but not $\BB$-acyclic. The right roof
$$
U^{\bullet}\xrightarrow{\hspace{2pt}\id_{\hspace{0.5pt}\scalebox{0.5}{$U^{\bullet}$}}}U^{\bullet}\mathop{\xleftarrow{\,\id_{\hspace{0.5pt}\scalebox{0.5}{$U^{\bullet}$}}}}_{\raisebox{3pt}{$\scriptscriptstyle\sim$}}U^{\bullet}
$$
then represents the zero morphism in $\DD^{\boldsymbol{\ast}}(\mathcal{A},\CC)$ but in $\DD^{\boldsymbol{\ast}}(\mathcal{A},\BB)$ it does not: Let us assume it does. Then there exists $\sigma\colon U^{\bullet}\rightarrow V^{\bullet}$, $\sigma\in\mathcal{N}^{\hspace{1pt}*}(\mathcal{A},\CC)$ with $\sigma\circ\id_{U^{\bullet}}=0$, see e.g., \cite[Lem 2.1.5 on p.~37]{Mili}. It follows $\sigma=0$ and this means $U^{\bullet}\cong0$ in $\DD^{\boldsymbol{\ast}}(\mathcal{A},\BB)$. 

\smallskip

\textcircled{2} We apply Remark \ref{RMK-DC}(ii) to $(f,g)$, drop the inclusion $i$ into the derived category from the notation, and obtain a triangle of stalk complexes
\begin{equation}\label{TR-1}
X\longrightarrow Y\longrightarrow Z\stackrel{u}{\longrightarrow} X[1]
\end{equation}
in $\DD^{\boldsymbol{\ast}}(\mathcal{A},\CC)$, which is not a triangle in $\DD^{\boldsymbol{\ast}}(\mathcal{A},\BB)$. Let us assume that the functor $\DD^{\boldsymbol{\ast}}(\mathcal{A},\BB)\rightarrow \DD^{\boldsymbol{\ast}}(\mathcal{A},\CC)$ is full. Then we have
$$
\Hom_{\DD^{\boldsymbol{\ast}}(\mathcal{A},\BB)}(Z,X[1])=\Hom_{\DD^{\boldsymbol{\ast}}(\mathcal{A},\CC)}(Z,X[1]),
$$
which means that $u\colon Z\rightarrow X[1]$ is a morphism in $\DD^{\boldsymbol{\ast}}(\mathcal{A},\BB)$. Using TR1, TR2 and $[-1]$ we get a triangle $X\rightarrow C^{\bullet}\rightarrow Z\rightarrow X[1]$ in $\DD^{\boldsymbol{\ast}}(\mathcal{A},\BB)$. As $(\mathcal{A},\BB)$ is exact and karoubian, there exists $B\in\mathcal{A}$ with $C^{\bullet}\cong B$ in $\DD^{\boldsymbol{\ast}}(\mathcal{A},\BB)$ by Remark \ref{RMK-DC}(iv). Using TR3 and TR1 we obtain that the triangles \eqref{TR-1} and
$$
X\longrightarrow B\longrightarrow Z\stackrel{u}{\longrightarrow} X[1]
$$
are isomorphic in $\DD^{\boldsymbol{\ast}}(\mathcal{A},\BB)$ and thus in particular $Y\cong B$ must hold. As both are stalk complexes, $Y$ and $B$ are isomorphic as objects in $\mathcal{A}$. By Remark \ref{RMK-DC}(ii) the conflation $X\rightarrow Y\rightarrow Z$ thus belongs to $\BB$.
\end{proof}

\begin{rmk}\label{RMK-NEQUIV}\begin{myitemize}\setlength{\itemindent}{-10pt}\item[(i)] That $I_1^*$ and $I_2^*$ are unfaithful was proven already in \cite[Thm 9.6]{HKRW}; the idea for the non-fullness proof above is due to van Roosmalen \cite{Adam24}. That none of the five functors is a triangle equivalence can be concluded alternatively with Remark \ref{RMK-DC}(ii).

\vspace{3pt}\setlength{\itemindent}{0pt}

\item[(ii)] As $I_k^*$ for $k=1,2,3$ is a functor between exact categories, one may show non-fullness in this case without the use of triangles as follows: If $\DD^{\boldsymbol{\ast}}(\mathcal{A},\BB)\rightarrow\DD^{\boldsymbol{\ast}}(\mathcal{A},\CC)$ were full, then by \cite[Prop in A.7]{Posi} the Yoneda Ext-groups
$$
\Ext^1_{(\mathcal{A},\CC)}(Z,X)=\Ext^1_{(\mathcal{A},\BB)}(Z,X)
$$
would coincide and there could thus not exist $X\stackrel{\scriptscriptstyle f}{\rightarrow}Y\stackrel{\scriptscriptstyle g}{\rightarrow}Z$ in $\CC\backslash\BB$.

\vspace{3pt}

\item[(iii)] The properties of $I_6^*$ are unknown.\diam{}

\end{myitemize}
\end{rmk}

In order to study the vertical functors in \eqref{EXACT-FUN-DER} we need to extend the notion of the `derived category of $\mathcal{B}\subseteq\mathcal{A}$ relative to $\mathcal{A}$' as used in \cite[Dfn 3.6]{SKT11} for $\mathcal{A}$ abelian and $\mathcal{B}\subseteq\mathcal{A}$ a full additive subcategory to our situation, namely $(\mathcal{A},\CC)$ being strongly deflation-exact and karoubian. To do so we observe that, using Remark \ref{RMK-DC}, it follows that $\Ac^{\boldsymbol{\ast}}(\mathcal{B};\mathcal{A},\CC):=\Ac(\mathcal{A},\CC)\cap\K^{\boldsymbol{\ast}}(\mathcal{B})$ is a thick triangulated subcategory of $\K^{\boldsymbol{\ast}}(\mathcal{B})$ for $*\in\{+,-,\text{b},\emptyset\}$.

\begin{dfn}\label{Stovi-DFN} Let $(\mathcal{A},\CC)$ be a strongly deflation-exact karoubian category, let $\mathcal{B}\subseteq\mathcal{A}$ be a full additive subcategory and let $*\in\{+,-,\text{b},\emptyset\}$. The Verdier quotient
$$
\DD^{\boldsymbol{\ast}}(\mathcal{B};\mathcal{A},\CC):=\K^{\boldsymbol{\ast}}(\mathcal{B})/\Ac^{\boldsymbol{\ast}}(\mathcal{B};\mathcal{A},\CC)=\K^{\boldsymbol{\ast}}(\mathcal{B})[\mathcal{N}^{\hspace{1pt}*}(\mathcal{B};\mathcal{A},\CC)^{-1}],
$$
with $\mathcal{N}^{\hspace{1pt}*}(\mathcal{B};\mathcal{A},\CC):=\bigl\{f^{\bullet}\colon X^{\bullet}\rightarrow Y^{\bullet}\:\big|\: \cone(f^{\bullet})\in\Ac^{\boldsymbol{\ast}}(\mathcal{B};\mathcal{A},\CC)\bigr\}$, defines the \emph{derived category of $\mathcal{B}$ relative to $(\mathcal{A},\CC)$}. \diam{}
\end{dfn}

To avoid notational overkill we write $\DD^{\boldsymbol{\ast}}(\mathcal{B},\CC)$ instead of $\DD^{\boldsymbol{\ast}}(\mathcal{B};\mathcal{A},\CC)$ provided that it is clear from context that $\CC$ is a conflation structure on $\mathcal{A}$. Indeed, this is the case in all our concrete examples as our notation, e.g., $\CC=\Etop$, always indicates on which category the conflation structure is defined. Notice however that even if $\mathcal{B}\subseteq(\mathcal{A},\CC)$ is a fully exact subcategory, $\DD^{\boldsymbol{\ast}}(\mathcal{B},\CC\cap\mathcal{B})$ and $\DD^{\boldsymbol{\ast}}(\mathcal{B},\CC)$ might \emph{not} be the same.

\begin{rmk}\label{EQ-REM} If $(\mathcal{A},\CC)$ is a strongly deflation-exact karoubian category and $\mathcal{B}\subseteq\mathcal{A}$ a full additive subcategory endowed with a conflation structure $\BB\subseteq\CC\cap\mathcal{B}$ such that $(\mathcal{B},\BB)$ is strongly deflation-exact and karoubian, then the natural functor between the `usual' derived categories factors through the `relative' derived category:
$$
\DD^{\boldsymbol{\ast}}(\mathcal{B},\BB)\longrightarrow\DD^{\boldsymbol{\ast}}(\mathcal{B};\mathcal{A},\CC)\longrightarrow \DD^{\boldsymbol{\ast}}(\mathcal{A},\CC).
$$
In particular, if $\Ac(\mathcal{A},\CC)\cap\K^{\boldsymbol{\ast}}(\mathcal{B})=\Ac^{\boldsymbol{\ast}}(\mathcal{B},\BB)$ holds, then the first functor above is an equivalence. By Proposition \ref{PROP-AC} this applies to $G^{*}_{\mathsf{def}}$, $G^{*}_{\mathsf{max}}$, $G^{*}_{\mathsf{top}}$ and $F^{*}_{\mathsf{top}}$.\diam{}
\end{rmk}

\begin{thm}\label{MAIN-1} For $*\in\{-,\textrm{b}\}$ the following are triangle equivalences:\vspace{2pt}
\begin{myitemize}

\item[(i)] $\DD^{\boldsymbol{\ast}}(\LBr,\Cregsur)\rightarrow\DD^{\boldsymbol{\ast}}(\LB,\Dmax)$,

\vspace{3pt}

\item[(ii)] $\DD^{\boldsymbol{\ast}}(\LBc,\Cregsur)\rightarrow\DD^{\boldsymbol{\ast}}(\LBr,\Cregsur)$.
\end{myitemize}
\end{thm}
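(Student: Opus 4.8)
The plan is to treat (i) and (ii) as two instances of a single phenomenon: a full additive subcategory that \emph{resolves} the ambient strongly deflation-exact category induces a triangle equivalence on relative bounded-above and bounded derived categories. Concretely, I would first establish the following one-sided extension of the comparison principle underlying \cite{SKT11}. Let $(\mathcal{A},\CC)$ be strongly deflation-exact and karoubian and let $\mathcal{B}\subseteq\mathcal{A}$ be a full additive subcategory such that every object of $\mathcal{A}$ admits a $\CC$-acyclic resolution by objects of $\mathcal{B}$ whose length is bounded by a fixed $d<\infty$. Then the natural functor $\DD^{\boldsymbol{\ast}}(\mathcal{B};\mathcal{A},\CC)\rightarrow\DD^{\boldsymbol{\ast}}(\mathcal{A},\CC)$ is a triangle equivalence for $\ast\in\{-,\text{b}\}$. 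Once this is available, both statements reduce to producing such resolutions, and the point is that a single construction does the job for both.

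For (i) I would apply this with $\mathcal{A}=\LB$, $\CC=\Dmax$ and $\mathcal{B}=\LBr$, using the standard resolution \eqref{RES} of Remark \ref{Stand-RES}: for every $X=\ind_{n\in\NN}X_n$ the sequence $\bigoplus_{n\in\NN}X_n\rightarrow\bigoplus_{n\in\NN}X_n\rightarrow X$ is a kernel-cokernel pair in $\LB$, hence a $\Dmax$-conflation since $\Dmax=\Call$, and its outer terms are regular. This exhibits a length-$1$ $\LBr$-resolution of an arbitrary LB-space, so the resolution dimension of $\LB$ relative to $\LBr$ is at most $1$ and the comparison principle applies with $d=1$. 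Equivalently, one may first use Remark \ref{EQ-REM} together with Proposition \ref{PROP-AC}(i) to identify $\DD^{\boldsymbol{\ast}}(\LBr,\Cregsur)$ with the relative category $\DD^{\boldsymbol{\ast}}(\LBr;\LB,\Dmax)$. For (ii) I would apply the same principle with $\mathcal{A}=\LBr$, $\CC=\Cregsur$ and $\mathcal{B}=\LBc$. The decisive observation is that the very same resolution \eqref{RES} works here, because a locally convex direct sum of Banach spaces is a strict inductive limit of Banach spaces and therefore complete, so that $\bigoplus_{n\in\NN}X_n\in\LBc$. For regular $X$ all three spaces are regular, whence by Theorem \ref{C-REG-SUR} (where $\Cregsur=\Call\cap\LBr$) the sequence is a $\Cregsur$-conflation with complete outer terms, i.e.\ a length-$1$ $\LBc$-resolution of an arbitrary regular LB-space. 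Again $d=1$, and the comparison principle yields the equivalence.

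The main obstacle is the comparison principle itself in this one-sided, non-exact setting. Classical resolution theorems require the resolving subcategory to be extension-closed and closed under kernels of deflations, and neither holds here: $\LBr$ is not extension-closed in $(\LB,\Dmax)$, since Lemma \ref{LEM-W} needs the inflation to be a weak isomorphism onto its range, and $\LBc$ is not even known to be closed under kernels of $\Cregsur$-deflations, this being exactly the open question of whether $Y^{\flat}$ is complete for a closed subspace $Y$ of a complete LB-space (obstacle 3 in Section \ref{SEC-COM}). The relative derived category is tailored to circumvent these closure failures, since acyclicity is measured in the ambient $(\mathcal{A},\CC)$ and only the \emph{existence} of finite $\mathcal{B}$-resolutions is required. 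I would therefore prove the principle by establishing essential surjectivity through a degreewise resolution of a bounded(-above) complex, propagated along the differentials by pullbacks and kept bounded by the uniform length bound $d$, and then deducing full faithfulness from the calculus of fractions for $\CC$-quasi-isomorphisms available in the framework of \cite{HR19,HR20}, together with the definitional identity $\Ac^{\boldsymbol{\ast}}(\mathcal{B};\mathcal{A},\CC)=\Ac(\mathcal{A},\CC)\cap\K^{\boldsymbol{\ast}}(\mathcal{B})$. Finiteness of the resolution dimension, and its coincidence for $\LBr$ and $\LBc$ recorded in Proposition \ref{RES-DIM-COR}, is precisely what makes the bounded case $\ast=\text{b}$ go through.
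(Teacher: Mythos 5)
Your proposal is correct and follows essentially the same route as the paper: the paper likewise reduces both statements, via Remark \ref{EQ-REM} and Proposition \ref{PROP-AC}(i), to an abstract comparison principle for $\DD^{\boldsymbol{\ast}}(\mathcal{B};\mathcal{A},\CC)\rightarrow\DD^{\boldsymbol{\ast}}(\mathcal{A},\CC)$ under the existence of finite $\mathcal{B}$-resolutions, proved by the same degreewise pullback construction for essential surjectivity and roof calculus for full faithfulness, and then feeds in the standard resolution \eqref{RES} with complete direct sums in both cases. The only minor difference is that you impose a uniform bound $d$ on resolution lengths, which the paper does not need (object-wise finiteness suffices, since in the bounded case one only resolves the single object $P^{-k}$ at the left end); this is harmless here as $d=1$ works.
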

\begin{proof} By \eqref{RES}, we get for every LB-space $X=\ind_{n\in\NN}X_n$ the complex
\begin{equation*}
\cdots\longrightarrow 0\longrightarrow\Bigosum{n\in\mathbb{N}}{}X_n\stackrel{d}{\longrightarrow}\Bigosum{n\in\mathbb{N}}{}X_n\stackrel{\sigma}{\longrightarrow}X\longrightarrow0\longrightarrow \cdots
\end{equation*}
where the direct sums are complete LB-spaces and which is $\Dmax$-acyclic by Remark \ref{RMK-DC}(iv). If $X$ is regular, then the complex is $\Cregsur$-acyclic. Moreover, by Remark \ref{EQ-REM}, the domain in (i) equals
$$
\DD^{\boldsymbol{\ast}}(\LBr,\Cregsur)=\DD^{\boldsymbol{\ast}}(\LBr,\Dmax)=\DD^{\boldsymbol{\ast}}(\LBr;\LB,\Dmax).
$$
In (ii) it holds $\DD^{\boldsymbol{\ast}}(\LBc,\Cregsur)=\DD^{\boldsymbol{\ast}}(\LBc;\LBr,\Cregsur)$ by definition. We thus may prove (i) and (ii) by considering the following abstract situation: Let $(\mathcal{A},\CC)$ be a strongly deflation exact karoubian category and let $\mathcal{B}\subseteq\mathcal{A}$ be a full additive subcategory. Assume that for every $A\in\mathcal{A}$ there exists a $\CC$-acyclic complex
\begin{equation}\label{B-RES}
\cdots\longrightarrow 0\longrightarrow B^{-n}\longrightarrow B^{-n+1}\longrightarrow\cdots\longrightarrow B^{0}\longrightarrow  A\longrightarrow0\longrightarrow \cdots
\end{equation}
with $B^{\hspace{1pt}j}\in\mathcal{B}$ and $n\in\NN_0$. We claim that the natural functor $\DD^{\boldsymbol{\ast}}(\mathcal{B};\mathcal{A},\CC)\rightarrow \DD^{\boldsymbol{\ast}}(\mathcal{A},\CC)$ is a triangle equivalence for $*\in\{-,\text{b}\}$.

\smallskip

\textcircled{1} Let $A^{\bullet}\in\K^{\boldsymbol{\ast}}(\mathcal{A})$ be given. Without loss of generality we assume that $A^j=0$ for $j\geqslant1$. By our assumption we find a deflation $\tau^{\hspace{1pt}0}\colon B^{\hspace{1pt}0}\rightarrow A^0$ with $B^{\hspace{1pt}0}\in\mathcal{B}$. By \hypref{R2}, the pullback $(P^{-1},p^{-1},g^{-1})$ of $\tau^{\hspace{1pt}0}$ along $d_A^{-1}$ exists and $g^{-1}$ is a deflation. By the pullback property we get a map $q^{-2}\colon A^{-2}\rightarrow P^{-1}$ with $p^{-1}\circ q^{-2}=0$ and $g^{-1}\circ q^{-2}=d_A^{-2}$. Iteration of this procedure yields the following diagram.\vspace{-5pt}
\begin{equation*}
\adjustbox{scale=1,center}{\begin{tikzcd}
& \vdots &\vdots & \vdots & \vdots& \vdots& \vdots\\[-20pt]
%
%
%
\cdots\arrow{r}&A^{-4}\arrow[d,equal]\arrow{r}&A^{-3}\arrow[d,equal]\arrow[swap, dashed]{r}{q^{-3}}&P^{-2}\arrow[swap]{d}{g^{-2}}\arrow{r}{p^{-2}}\commutes[\mathrm{PB}]{dr}&B^{-1}\arrow{r}\arrow[]{d}{h^{-1}} & B^0 \arrow[d, equal]\arrow{r} & 0\arrow{r}\arrow{d} &\cdots\\[4pt]
\cdots\arrow{r}&A^{-4}\arrow[d,equal]\arrow{r}&A^{-3}\arrow[d,equal]\arrow{r}&A^{-2}\arrow[d,equal]\arrow[swap, dashed]{r}{q^{-2}}&P^{-1}\arrow{r}{p^{-1}}\arrow[swap]{d}{g^{-1}}\commutes[\mathrm{PB}]{dr} & B^0 \arrow[]{d}{\tau^0}\arrow{r} & 0\arrow{r}\arrow{d} &\cdots\\[4pt]
\cdots\arrow{r}&A^{-4}\arrow[swap]{r}{d_A^{-4}}&A^{-3}\arrow[swap]{r}{d_A^{-3}}&A^{-2}\arrow[swap]{r}{d_A^{-2}} &A^{-1} \arrow{r}[swap]{d_A^{-1}} & A^0\arrow{r} & 0\arrow{r} &\cdots
\end{tikzcd}}
\end{equation*}
If $*=-$, then we continue the above indefinitely and obtain a complex $B^{\bullet}\in\C^{\boldsymbol{-}}(\mathcal{B})$ with objects $B^{\hspace{1pt}j}\in\mathcal{B}$ and as differentials we put $d_B^{\hspace{1pt}j}:=-p^{\hspace{1pt}j}\circ h^{\hspace{1pt}j}$ for $j\leqslant 0$ and zero for $j\geqslant1$. If $*=\text{b}$, then there exists $k\in\NN$ such that $A^{j}=0$ holds for all $j<-k$. Thus, after the $k$-th pullback, we may take a resolution of $P^{-k}$ as in \eqref{B-RES} with only finitely many non-zero terms and obtain
\begin{equation*}
\adjustbox{scale=1,center}{\begin{tikzcd}
\cdots\arrow{r}&B^{-k-2}\arrow{d}\arrow{r}{d_B^{-k-2}}&B^{-k-1}\arrow{d}\arrow{r}{d_B^{-k-1}}&B^{-k}\arrow[swap]{d}{h^{-k}}\arrow{r}{}&B^{-k+1}\arrow{r}{}\arrow[equal]{d} & B^{-n+2} \arrow[equal]{d}\arrow{r} &\cdots\\[4pt]
\cdots\arrow{r}&0\arrow{d}\arrow{r}&0\arrow{d}\arrow{r}&P^{-k}\commutes[\mathrm{PB}]{dr}\arrow[swap]{d}{g^{-k}}\arrow{r}{p^{-k}}&B^{-k+1}\arrow{r}\arrow{d}{h^{-k+1}} & B^{-k+2} \arrow[equal]{d}\arrow{r} &\cdots\\[4pt]
\cdots\arrow{r}&0\arrow[swap]{r}{}&0\arrow[swap]{r}{}&A^{-k}\arrow[swap, dashed]{r}{q^{-k}} &P^{-k+1} \arrow{r}[]{p^{-k+1}} & B^{-k+2}\arrow{r} \arrow{r} &\cdots
\end{tikzcd}}
\end{equation*}
where the top row defines a complex $B^{\bullet}\in\C^{\bb}(\mathcal{B})$ with differentials for $j<-k$ as given and for $j\geqslant -k$ as defined in the $*=-$ case.   We define now $\tau\colon B^{\bullet}\rightarrow A^{\bullet}$; in the case $*=-$ via $\tau^{\hspace{1pt}j}:=(-1)^{\hspace{0.5pt}j}\cdot g^{\hspace{1pt}j}\circ h^{\hspace{1pt}j}$ for $j<0$ and zero for $j>0$, in the case $*=\text{b}$ we put $\tau^{\hspace{1pt}j}=0$ for $j<-k$. We calculate the mapping cone of $\tau$ and get
\begin{equation*}
\adjustbox{scale=1,center}{\begin{tikzcd}[column sep=0.6cm, row sep =0.4cm, ampersand replacement=\&]
\& \& P^{-2}\arrow{rd}{\scalebox{0.75}{$\bigl[\begin{smallmatrix}p^{-2}\\g^{-2}\end{smallmatrix}\bigr]$}}\\
\cdots\arrow{r}\&[-0.8em]B^{-2}\oplus{}A^{-3}\arrow{ru}{\scalebox{0.70}{$\bigl[\hspace{0.5pt}h^{-2}\;q^{-3}\bigr]$}}\arrow{rr}[swap]{\scalebox{0.75}{$
\Bigl[\begin{smallmatrix}
-d_B^{-2} & 0\\
\tau^{-2} & d_{A}^{-3}
\end{smallmatrix}\Bigr]
$}}\&\&B^{-1}\oplus{}A^{-2}\arrow[swap]{rd}{\scalebox{0.75}{$\bigl[h^{-1}\;-q^{-2}\bigr]$}}\arrow{rr}{\scalebox{0.75}{$
\Bigl[\begin{smallmatrix}
-d_B^{-1} & 0\\
\tau^{-1} & d_{A}^{-2}
\end{smallmatrix}\Bigr]
$}}\&\&B^{0}\oplus{}A^{-1}\arrow{rr}{\hspace{-5pt}[\hspace{1pt}\scalebox{0.75}{$
\tau^{-1}\;d_{A}^{-2}
$}\hspace{1pt}]}\&\&A^{0}\arrow{r}\&0\\[3pt]
\phantom{X^i}\arrow{ru}\&\& \&\&P^{-1}\arrow[swap]{ru}{\scalebox{0.75}{$\bigl[\begin{smallmatrix}\phantom{-}p^{-1}\\-g^{-1}\end{smallmatrix}\bigr]$}} \&\&
\end{tikzcd}}
\end{equation*}
where the acyclicity follows from \cite[dual of Prop 5.7]{BC13}. In the case of $*=-$ we thus get that $\tau\colon B^{\bullet}\rightarrow A^{\bullet}$ belongs to $\mathcal{N}^{\hspace{1pt}-}(\mathcal{A},\CC)$. If $*=\text{b}$ then the left end of $\cone(\tau)$ looks like
\begin{equation*}
\adjustbox{scale=1,center}{\begin{tikzcd}[column sep=0.4cm, row sep =0.4cm, ampersand replacement=\&]
\& \& \hspace{10pt}\ker h^{-k}\hspace{-3pt}\arrow{rd}{i^{-k}}\\
\cdots\arrow{r}\&[0.5em]B^{-k-1}\arrow{rr}[swap]{d_B^{-k-1}}\arrow{ru}{p^{-k-1}}\&\&[0.4em]B^{-k}\arrow{rr}{\scalebox{0.75}{$
\Bigl[\begin{smallmatrix}
-d_B^{-k}\\
\tau^{-k} 
\end{smallmatrix}\Bigr]
$}}\arrow[swap]{rd}{h^{-k}}\&\&B^{-k+1}\oplus{}A^{-k}\arrow{rd}\arrow{r}\&\cdots\\[4pt]
\phantom{K^I}\arrow[swap]{ru}{}\&\&  \&\& P^{-k}\arrow[swap]{ru}{\scalebox{0.75}{$\bigl[\begin{smallmatrix}\phantom{(-1)^{k}}p^{-k}\\(-1)^{k}g^{-k}\end{smallmatrix}\bigr]$}}\&\& \phantom{\cdots}
\end{tikzcd}}
\end{equation*}
where we get inflations/deflations $h^k$, $i^{-k}$, $p^{-k-1}$ etc.\ from \eqref{B-RES}. The remaining maps are deflations/inflations due to \cite[dual of Prop 5.7]{BC13}. Our construction thus yields $\tau\colon B^{\bullet}\rightarrow A^{\bullet}$ with $B^{\bullet}\in\C^{\bb}(\mathcal{B})$ and $\tau\in\mathcal{N}^{\hspace{1pt}\text{b}}(\mathcal{A},\CC)$. In particular, $\DD^{\boldsymbol{\ast}}(\mathcal{B};\mathcal{A},\CC)\rightarrow\DD^{\boldsymbol{\ast}}(\mathcal{A},\CC)$ is essentially surjective.

\smallskip

\textcircled{2} Let $X^{\bullet}\stackrel{\scriptscriptstyle f}{\rightarrow}Z^{\bullet}\stackrel{\scriptscriptstyle\sigma}{\leftarrow}Y^{\bullet}$ be in $\Hom_{\DD^{\boldsymbol{\ast}}(\mathcal{B};\mathcal{A},\CC)}(X^{\bullet},Y^{\bullet})$ and assume that this roof represents the zero morphism in $\DD^{\boldsymbol{\ast}}(\mathcal{A},\CC)$. Then there exists $s\colon A^{\bullet}\rightarrow X^{\bullet}$ in $\K^{\boldsymbol{\ast}}(\mathcal{A})$ with $f\circ s=0$, see, e.g., \cite[Lem 1.2.5 on p.~37]{Mili}. By \textcircled{1} there exists $\tau\colon B^{\bullet}\rightarrow A^{\bullet}$ with $B^{\bullet}\in\K^{\boldsymbol{\ast}}(\mathcal{B})$ and $\tau\in\mathcal{N}^{\hspace{1pt}*}(\mathcal{A},\CC)$. Then $s\circ\tau\in\mathcal{N}^{\hspace{1pt}*}(\mathcal{A},\CC)$ and as $B^{\bullet}$ and $Z^{\bullet}$ are complexes over $\mathcal{B}$ we get $s\circ\tau\in\mathcal{N}^{\hspace{1pt}*}(\mathcal{B};\mathcal{A},\CC)$, $f\circ s\circ\tau=0$ and thus our initial roof was already zero in $\Hom_{\DD^{\boldsymbol{\ast}}(\mathcal{B};\mathcal{A},\CC)}(X^{\bullet},Y^{\bullet})$.

\smallskip

\textcircled{3} Let now $X^{\bullet},Y^{\bullet}\in\K^{\boldsymbol{\ast}}(\mathcal{B})$ and let $X\stackrel{\scriptscriptstyle f}{\rightarrow}Z^{\bullet}\stackrel{\scriptscriptstyle\sigma}{\leftarrow}Y\in\Hom_{\DD^{\boldsymbol{\ast}}(\mathcal{A},\CC)}(X^{\bullet},Y^{\bullet})$. By \textcircled{1} there exists $\tau\colon Z^{\bullet}\rightarrow B^{\bullet}$ with $B^{\bullet}\in\K^{\boldsymbol{\ast}}(\mathcal{B})$ and $\tau\in\mathcal{N}^{\hspace{1pt}*}(\mathcal{A},\CC)$. As above it follows $\sigma\circ\tau\in\mathcal{N}^{\hspace{1pt}*}(\mathcal{B};\mathcal{A},\CC)$ and the initial roof is equivalent to $X^{\bullet}\xrightarrow{\scriptscriptstyle \tau\circ f}B^{\bullet}\xleftarrow{\scriptscriptstyle \tau\circ\sigma}Y^{\bullet}$ which belongs to $\Hom_{\DD^{\boldsymbol{\ast}}(\mathcal{B};\mathcal{A},\CC)}(X^{\bullet},Y^{\bullet})$. 
\end{proof}

\begin{rmk}\begin{myitemize}\setlength{\itemindent}{-10pt}\item[(i)] The proof of \textcircled{1} goes back to \cite[Prop I.4.6]{H66} and has been generalized and modified by many authors over the years. Our proof follows \cite[Lem 3.12]{HR20} but  establishes the quasi-isomorphism property of $\tau$ in a slightly different way. Parts \textcircled{2} and \textcircled{3} are based on \cite[Lem 3.8]{SKT11}, \cite[I.3.3]{HRS96} and \cite[Prop 7.2.1(ii)]{KS} where however abelian categories were considered.

\smallskip\setlength{\itemindent}{0pt}

\item[(ii)] In the notation of \cite[Dfn 3.1]{HR20} the category $\LBr$ is a \emph{uniformly preresolving subcategory} of $(\LB,\Dmax)$; Theorem \ref{MAIN-1}(i) can alternatively be derived from \cite[Thm 3.11]{HR20} and by \cite[Thm 4.1]{HR20} its statement holds for $*=\emptyset$, too. If $\LBc\subseteq(\LBr,\Cregsur)$ is (uniformly) preresolving is unknown.\diam{}

\end{myitemize}
\end{rmk}

\begin{thm}\label{DE-COR} For $*\in\{-,\text{b}\}$ the following natural functor is a triangle equivalence:
$$
F^*_{\mathsf{rel}}\colon\DD^{\boldsymbol{\ast}}(\LBc,\Dmax)\rightarrow\DD^{\boldsymbol{\ast}}(\LBr,\Dmax).
$$
\end{thm}
\begin{proof}By using Proposition \ref{PROP-AC}(i) twice and Theorem \ref{MAIN-1}(ii) once we get
\begin{equation*}
\begin{aligned}
\DD^{\boldsymbol{\ast}}(\LBc,\Dmax)& = \zfrac{\K^{\boldsymbol{\ast}}(\LBc)}{\Ac(\LBr,\Cregsur)\cap\K^{\boldsymbol{\ast}}(\LBc)}= \DD^{\boldsymbol{\ast}}(\LBc,\Cregsur)\\
& \cong \DD^{\boldsymbol{\ast}}(\LBr,\Cregsur)=\zfrac{\K^{\boldsymbol{\ast}}(\LBr)}{\Ac(\LB,\Dmax)\cap\K^{\boldsymbol{\ast}}(\LBr)}   \\[2pt]
& = \DD^{\boldsymbol{\ast}}(\LBr,\Dmax)
\end{aligned}
\end{equation*}
as desired.
\end{proof}

Before we turn to global dimensions in the next section, we note the following byproduct of the proof of Theorem \ref{MAIN-1}: If $(\mathcal{A},\CC)$ is as in Assumption \ref{ASS} and $\mathcal{B}\subseteq\mathcal{A}$ is a full additive subcategory such that for every $A\in\mathcal{A}$ there exists a resolution as in \eqref{B-RES}, the number
$$
\resdim_{\mathcal{B}}(\mathcal{A},\CC):=\sup_{A\in\mathcal{A}}\resdim_{\mathcal{B}}(A)\in\NN_0\cup\{\infty\}
$$
is called the \emph{resolution dimension of $\mathcal{A}$ with respect to $\mathcal{B}$}, see \cite[Dfn 3.1]{HR20}. With this notation we get immediately the following.

\begin{prop}\label{RES-DIM-COR} We have
$$
\resdim_{\LBc}(\LB,\Dmax)=\resdim_{\LBr}(\LB,\Dmax)=1.
$$
\end{prop}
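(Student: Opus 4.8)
The plan is to sandwich both resolution dimensions between $1$ and $1$, treating the complete and the regular case in one stroke. First I would recall the two facts about $\resdim_{\mathcal{B}}$ that are needed. On the one hand, $\resdim_{\mathcal{B}}(A)\leqslant n$ means precisely that $A$ admits a $\Dmax$-acyclic resolution as in \eqref{B-RES} of length $n$ with resolving terms in $\mathcal{B}$. On the other hand, $\resdim_{\mathcal{B}}(A)=0$ holds if and only if $A$ belongs to $\mathcal{B}$: for a two-term complex $\cdots\rightarrow0\rightarrow B^{0}\rightarrow A\rightarrow0\rightarrow\cdots$ to be $\Dmax$-acyclic forces the augmentation $B^{0}\rightarrow A$ to be an isomorphism by the degreewise factorisation in the definition of acyclicity, and since $\LBc$ and $\LBr$ are full subcategories of $\LB$ closed under isomorphism, this gives $A\in\mathcal{B}$.

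For the upper bound I would invoke the standard resolution \eqref{RES}. For an arbitrary LB-space $X=\ind_{n\in\NN}X_{n}$ it produces, after extension by zeros, the complex
$$
\cdots\longrightarrow 0\longrightarrow\Bigosum{n\in\mathbb{N}}{}X_n\stackrel{d}{\longrightarrow}\Bigosum{n\in\mathbb{N}}{}X_n\stackrel{\sigma}{\longrightarrow}X\longrightarrow0\longrightarrow \cdots,
$$
which is $\Dmax$-acyclic by Remark \ref{RMK-DC}(iii), since $(d,\sigma)\in\Call=\Dmax$ by Theorem \ref{LB-confl-str}. As already recorded in the proof of Theorem \ref{MAIN-1}, the two direct sums are complete LB-spaces, and they are regular by Remark \ref{Stand-RES}; hence they are simultaneously objects of $\LBc$ and of $\LBr$. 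This exhibits, for every $X\in\LB$, a length-$1$ resolution by objects of $\LBc$ (respectively $\LBr$) in the sense of \eqref{B-RES}, so that $\resdim_{\LBc}(X)\leqslant1$ and $\resdim_{\LBr}(X)\leqslant1$ for all $X$. Taking the supremum over $X\in\LB$ then yields $\resdim_{\LBc}(\LB,\Dmax)\leqslant1$ and $\resdim_{\LBr}(\LB,\Dmax)\leqslant1$.

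For the lower bound I would take the non-regular LB-space $X$ due to K\"othe \cite{K48, K50}, discussed after Definition \ref{DFN-LBR}. Since $X$ is not regular it is not an object of $\LBr$; and since completeness implies regularity in the hierarchy of Section \ref{SEC-1}, the space $X$ is a fortiori incomplete and therefore not an object of $\LBc$ either. By the characterisation of resolution dimension $0$ from the first paragraph, neither $\resdim_{\LBc}(X)$ nor $\resdim_{\LBr}(X)$ can vanish, so both suprema are at least $1$. Combining this with the upper bound gives $\resdim_{\LBc}(\LB,\Dmax)=\resdim_{\LBr}(\LB,\Dmax)=1$. I do not anticipate a genuine obstacle here: the only point that deserves care is the identification of resolution dimension $0$ with membership in the subcategory, after which both inequalities are immediate from the standard resolution and the classical existence of a non-regular LB-space.
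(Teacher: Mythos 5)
Your proposal is correct and follows exactly the paper's argument: the standard resolution \eqref{RES} by complete (hence regular) direct sums gives the upper bound $\leqslant 1$, and the existence of a non-regular (hence incomplete) LB-space rules out resolution dimension $0$. The only difference is that you spell out why resolution dimension $0$ forces membership in the subcategory, which the paper leaves implicit.
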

\begin{proof} The standard resolution shows that both numbers are less or equal to one. Since there exist LB-spaces which are not regular (and thus not complete either) the dimensions must both be greater or equal to one.
\end{proof}


\section{Global dimensions}\label{SEC-GD}

We define the global dimension of a strongly deflation-exact and karoubian category analogously to exact categories: Firstly, we say that an object $P\in\mathcal{A}$ is \emph{projective} if for any deflation $e\colon E\rightarrow X$  and any morphism $f\colon P\rightarrow X$ there exists a morphism $g\colon P\rightarrow E$ such that $e\circ g=f$. Then we say that $(\mathcal{A},\CC)$ has \emph{enough projectives} if for every $X\in\mathcal{A}$ there exists a deflation $P\rightarrow X$ with $P$ projective. A \emph{projective resolution} of $X\in\mathcal{A}$ is a $\CC$-acyclic complex
$$
\cdots\longrightarrow P^{\hspace{0.5pt}-n}\longrightarrow \cdots\longrightarrow P^{-1}\longrightarrow P^{\hspace{0.5pt}0}\longrightarrow X\longrightarrow 0
$$
with $P^{\hspace{1pt}j}$ projective. If $X$ has a finite projective resolution, then the smallest possible $n\in\NN$ for which a resolution with $P^{\hspace{0.5pt}-n}\not=0$ and $P^{\hspace{1pt}j}=0$ for $j<-n$ exists, is denoted by $\operatorname{pd}X$ and called the \emph{projective dimension} of $X$. If $X$ has only infinite projective resolutions, we put $\operatorname{pd}X=\infty$. Given that $(\mathcal{A},\CC)$ has enough projectives, every object has a projective resolution and the \emph{global dimension} of $(\mathcal{A},\CC)$ is defined as
$$
\gldim(\mathcal{A},\CC):=\sup_{\hspace{-1pt}X\in\mathcal{A}\hspace{2pt}}\operatorname{pd}X\in\NN_0\cup\{\infty\}.
$$
Note that in this case $\gldim(\mathcal{A},\CC)=\resdim_{\Proj(\mathcal{A},\CC)}(\mathcal{A},\CC)$ holds, where the resolution dimension is defined as at the end of Section \ref{SEC-7} and where $\Proj(\mathcal{A},\CC)$ denotes the full subcategory formed by the projective objects.

\smallskip

The result below on the projective dimension of $(\LB,\Dmax)$ follows essentially by refining arguments of K\"othe \cite{KoetheHeb66} and Doma\'nski \cite{DomProj92}. Notice that in particular the classification of projectives in Theorem \ref{PROP-PROJ}(i) is stated exactly as below already in \cite[Main Thm]{DomProj92} but that Doma\'nski defined projective LB-spaces not in the way that we did above. We will however show that his definition is indeed equivalent to $\Dmax$-projectivity.

\begin{thm}\label{PROP-PROJ} We consider the category $(\LB,\Dmax)$. \vspace{2pt}
\begin{compactitem}
\item[(i)] An object $P$ is projective iff $P\cong\bigoplus_{n\in\mathbb{N}}\ell^{\hspace{0.5pt}1}(\Gamma_n)$ for suitable $\Gamma_n$.\vspace{3pt}

\item[(ii)] The category has enough projectives.\vspace{3pt}

\item[(iii)] We have $\gldim(\LB,\Dmax)=1$.
\end{compactitem}
\end{thm}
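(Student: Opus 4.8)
The plan is to treat the three parts in order, letting a single structural fact about subspaces of free objects carry the weight. For the classification in (i), the implication from $\bigoplus_n\ell^1(\Gamma_n)$ to projectivity splits into showing that each $\ell^1(\Gamma)$ is $\Dmax$-projective and that coproducts of projectives are projective; the latter is immediate from the universal property of the direct sum. For the former I would, given a deflation $e\colon E\to X$ (a surjection, by Theorem \ref{LB-confl-str} and Proposition \ref{LB-PROP}(iv)) and a morphism $f\colon\ell^1(\Gamma)\to X$, invoke Grothendieck's factorization theorem to route $f$ through a Banach step $X_m$, reducing the lifting problem to the Banach level; there the images $f(\delta_\gamma)$ of the unit vectors form a bounded family, which lifts along $e$ into a single Banach step of $E$ with norm control via the open mapping theorem, and freeness of $\ell^1(\Gamma)$ extends this to the desired $g\colon\ell^1(\Gamma)\to E$. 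This core lifting is exactly the computation of K\"othe \cite{KoetheHeb66} and Doma\'nski \cite{DomProj92}, whose subtle point is that deflations between LB-spaces lift step-bounded sets. For the converse I would cover a projective $P=\ind_n P_n$ by a free object: choosing surjections $\ell^1(\Gamma_n)\twoheadrightarrow P_n$ and composing with the map $\sigma$ from the standard resolution \eqref{RES} yields a deflation $Q=\bigoplus_n\ell^1(\Gamma_n)\to P$, which splits by projectivity of $P$, so $P$ is a direct summand of $Q$; the complemented case of the structural lemma below then identifies $P$ with a space $\bigoplus_m\ell^1(\Delta_m)$.

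Part (ii) drops out of this construction, since the deflation $Q=\bigoplus_n\ell^1(\Gamma_n)\to X$ built for an arbitrary $X$ is already a deflation from a projective. For (iii), the inequality $\gldim(\LB,\Dmax)\geq1$ is then immediate from (i): an infinite-dimensional Banach space not of the form $\ell^1(\Gamma)$, for instance $\ell^2$, is an LB-space that is not projective, so it has projective dimension at least one. For the reverse inequality I would fix $X\in\LB$, take by (ii) a deflation $e\colon Q\to X$ with $Q=\bigoplus_n\ell^1(\Gamma_n)$ projective, and form its $\LB$-kernel $K=e^{-1}(0)^{\flat}\hookrightarrow Q$ as in Proposition \ref{LB-PROP}(i). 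It then suffices to show that $K$ is projective: the conflation $0\to K\to Q\to X\to0$ is then a length-one projective resolution, giving $\operatorname{pd}X\leq1$ for every $X$ and hence $\gldim(\LB,\Dmax)\leq1$.

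Everything therefore reduces to the statement that I expect to be the main obstacle, namely that every closed subspace of $\bigoplus_n\ell^1(\Gamma_n)$, endowed with the $\flat$-topology, is again isomorphic to some $\bigoplus_m\ell^1(\Delta_m)$ and hence projective by (i). This is the hereditary heart of the theorem, the analogue of ``subobjects of projectives are projective'', and it simultaneously furnishes the complemented instance needed to close (i). I would prove it by refining the decomposition arguments of K\"othe \cite{KoetheHeb66} and Doma\'nski \cite{DomProj92}: using Grothendieck factorization together with the passage to the $\flat$-topology to present the subspace as a countable inductive limit built from Banach subspaces of the summands $\ell^1(\Gamma_n)$, and then running a Pe\l czy\'nski-type decomposition to recognise the result as a direct sum of $\ell^1$-spaces. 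The delicate part, where the real work lies, is controlling the interaction of the subspace with the inductive-limit structure so that the $\flat$-topology genuinely reconstitutes a free object; this is precisely the step into which the elementary tools recalled earlier (the open mapping theorem and Grothendieck's factorization theorem) are designed to feed.
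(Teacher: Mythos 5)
Parts (i) and (ii) of your proposal, together with the lower bound in (iii), follow essentially the paper's route. The projectivity of $\bigoplus_{n}\ell^{1}(\Gamma_n)$ is obtained in the paper by reducing projectivity to the splitting of all conflations ending at $P$ and then citing K\"othe's notion of liftable (``hebbar'') spaces \cite{KoetheHeb66}; your hands-on lifting argument via Grothendieck factorization and step-bounded lifting is the content of that citation. The converse of (i) is identical (split a free cover, then apply Doma\'nski's Pe{\l}czy\'nski-type decomposition for \emph{complemented} subspaces of $\bigoplus_n\ell^1(\Gamma_n)$ \cite{DomProj92}), as is (ii). Your witness for $\gldim\geqslant 1$ ($\ell^2$ is not on the list in (i)) differs harmlessly from the paper's (non-regular LB-spaces cannot be projective because free objects are regular).

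The genuine gap is the ``structural lemma'' on which you hang the inequality $\gldim(\LB,\Dmax)\leqslant 1$. The assertion that every closed subspace of $\bigoplus_n\ell^1(\Gamma_n)$, taken with the $\flat$-topology, is again of the form $\bigoplus_m\ell^1(\Delta_m)$ is false, and it already fails inside a single Banach step: a closed subspace of $\ell^1$ is a Banach space, so the $\flat$-operation changes nothing, and it is classical that $\ell^1$ possesses closed infinite-dimensional subspaces not isomorphic to any $\ell^1(\Delta)$ --- for instance subspaces failing the approximation property, or the kernel of a quotient map $\ell^1\rightarrow X$ for $X$ not an $\mathcal{L}_1$-space (Lindenstrauss--Rosenthal). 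The Pe{\l}czy\'nski decomposition procedure you propose cannot repair this, because it needs the subspace to be \emph{complemented}; complementation is exactly what the hereditary statement would have to produce and exactly what fails for such subspaces. Accordingly, the paper does not argue that kernels of arbitrary free covers are projective. Instead, for $X=\ind_{n}X_n$ with unit balls $B_n\subseteq B_{n+1}$, it writes down one specific conflation $\bigoplus_n\ell^1(B_n)\rightarrow\bigoplus_n\ell^1(B_n)\rightarrow X$, the first map being the telescoping map $D(\lambda)_n=\lambda_n-\lambda_{n-1}$ and the second the summation map $S$, and identifies the kernel term with the free object itself, so that projectivity of the kernel is built into the resolution rather than deduced from subspace structure theory; all the weight of part (iii) rests on the exactness of this particular sequence, i.e., on the equality $\operatorname{ran}D=S^{-1}(0)$. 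Note finally that your reduction is sharper than you suggest: by a Schanuel-type argument (pull back two free covers of $X$ against each other and split both projections using projectivity of the free terms), your kernel $K=e^{-1}(0)^{\flat}$ is complemented in a free object as soon as \emph{some} length-one free resolution of $X$ exists, and hence is free by \cite{DomProj92}. So the upper bound in (iii) stands or falls with exhibiting such an explicit resolution; it cannot be obtained from a general ``subobjects of projectives are projective'' principle, which is unavailable here.
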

\begin{proof}\textcircled{1} Let us first show that in any deflation-exact category $(\mathcal{A},\CC)$ an object $P$ is projective if and only if every conflation
\begin{equation}\label{CONF}
A\longrightarrow B\stackrel{q}{\longrightarrow}P
\end{equation}
splits. Indeed, if $P$ is projective, then $\id_P$ lifts along the deflation $q$ yielding $r\colon P\rightarrow B$ with $q\circ r=\id_P$. For the other direction let an arbitrary $f\colon P\rightarrow X$ and a deflation $e\colon E\rightarrow X$ be given. We form the pullback of $e$ along $f$. Due to \hypref{R2} we get a deflation and can take its kernel:
\begin{equation*}
\begin{tikzcd}
\ker i_P\arrow{r}{}&S\arrow{r}{i_P}\arrow{d}[swap]{i_T}\commutes[\mathrm{PO}]{dr} & P \arrow{d}{f}\\[4pt]
&T \arrow{r}[swap]{e} & X.
\end{tikzcd}
\end{equation*}
By assumption, the conflation $\ker i_P\longrightarrow S\stackrel{i_P}{\longrightarrow}P$ splits and  there exists $r\colon P\rightarrow S$ with $i_P\circ r=\id_P$. It follows $e\circ i_T\circ r = f\circ i_P\circ r = f$.

\smallskip

\textcircled{2} Next we note that, for a given class $\mathcal{R}$ of lcs, K\"othe \cite[p.~182]{KoetheHeb66} calls a space $P$ \emph{liftable in the class $\mathcal{R}$} (in K\"othe's German-language article: `hebbar in der Klasse $\mathcal{R}$'), if for every $B\in\mathcal{R}$ such that $B/A\cong P$ holds for some subspace $A\subseteq B$, there is a continuous projection onto $P$ with kernel $A$. An LB-space $P$ is thus liftable in the class $\LB$ iff every $\LB$-kernel-cokernel pair of the form $A\rightarrow B\rightarrow P$ splits. By \textcircled{1} and \cite[6(3)]{KoetheHeb66} any direct sum of spaces of the form $\ell^{\hspace{0.5pt}1}(\Gamma)$ is thus projective. This shows the implication \textquotedblleft{}$\Longleftarrow$\textquotedblright{} in (i).

\smallskip

\textcircled{3} Let now $X=\ind_{n\in\NN}X_n$ be an LB-space with defining sequence $X_0\hookrightarrow X_1\hookrightarrow\cdots$. Denote by $B_n$ the closed unit ball of $X_n$. By successive renorming, if necessary, we achieve that $B_n\subseteq B_{n+1}$ holds for $n\in\NN$. We define
$$
S\colon \Bigosum{n\in\NN}{}\ell^1(B_n)\rightarrow X,\;\bigl((\lambda_{b,n})_{b\in B_n}\bigr)_{n\in\NN}\mapsto \fsum_{n\in\NN}\bigl(\fsum_{b\in B_n}\lambda_{b,n}b\bigr)
$$
which is a linear, continuous and surjective map. Moreover, we define
$$
D\colon \Bigosum{n\in\NN}{}\ell^1(B_n)\rightarrow\Bigosum{n\in\NN}{}\ell^1(B_n),\,\bigl((\lambda_{b,n})_{b}\bigr)_{n}\mapsto \bigl((\lambda_{b,n})_{b\in B_n}-(\lambda_{b,n-1})_{b\in B_{n-1}}\bigr)_{n\in\NN}
$$
where we use $B_{n-1}\subseteq B_n$ to read $(\lambda_{b,n-1})_{b\in B_{n-1}}\in\ell^1(B_n)$ by putting $\lambda_{b,n-1}:=0$ for $b\in B_{n}\backslash B_{n-1}$ and by agreeing that $(\lambda_{b,-1})_{b\in B_{-1}}\equiv0\in\ell^1(B_0)$. Using the universal property of the direct sum it follows that $D$ is linear and continuous. A direct calculation shows that $D$ is injective. Using
$$
T\colon S^{-1}(0)\rightarrow \Bigosum{n\in\NN}{}\ell^1(B_n),\;\bigl((\lambda_{b,n})_{b}\bigr)_{n}\mapsto\bigl(\fsum_{k=0}^n(\lambda_{b,k})_{b\in B_k}\bigr)_{n\in\NN},
$$
where we again read $(\lambda_{b,k})_{b\in B_k}\in\ell^1(B_n)$ for $k=0,\dots,n-1$ via $B_k\subseteq B_n$ and fill undefined entries with zeros, we see that $(D,S)$ is a kernel-cokernel pair in $\LB$. In particular the above shows that every LB-space $X$ can be written as the quotient of a direct sum of $\ell^1(\Gamma)$s by a closed subspace. This was mentioned already in \cite[Proof of 6(4)]{KoetheHeb66} without all the details. In combination with \textcircled{2} we get that $\LB$ has enough projectives, i.e., we established (ii), and that the category's global dimension is at most one.

\smallskip

\textcircled{4} Let $P$ be a projective object of $\LB$. By \textcircled{3} we get a conflation
$$
\ker S\longrightarrow\Bigosum{n\in\NN}{}\ell^1(B_n)\stackrel{S}{\longrightarrow} P
$$
which splits as $P$ is projective. This means that $P\subseteq\bigoplus_{n\in\mathbb{N}}\ell^1(B_n)$ is a complemented subspace. Doma\'nski \cite{DomProj92} proved via Pelczy\'nski's decomposition procedure that then $P$ itself is isomorphic to a direct sum of $\ell^1(\Gamma)$s. This completes the proof of (i).

\smallskip

\textcircled{5} As direct sums of $\ell^1(\Gamma)$s are in particular regular LB-spaces, non-regular LB-spaces cannot be projective. Thus, the projective dimension of $(\LB,\Call)$ cannot be zero, which completes the proof of (iii).
\end{proof}

\begin{rmk}\label{PROJ-RMK}\begin{myitemize}\setlength{\itemindent}{-10pt}\item[(i)] For $\EE\in\{\Emax,\Etop\}$ we have $\EE\subseteq\Call$ and thus every object that is isomorphic to a countable direct sum of $\ell^1(\Gamma)$s is projective in $(\LB,\EE)$. It is unknown if these are all projectives as well as it is open if $(\LB,\EE)$ has enough projectives.

\vspace{3pt}

\setlength{\itemindent}{0pt}\item[(ii)] We point out that the category $\LB$ does not have enough projectives if the latter notion is defined not relative to a conflation structure, but by requiring that a morphism $f\colon P\rightarrow X$ out of a projective lifts along every epimorphism $e\colon E\rightarrow X$, see \cite[Thm 5.3(iv)]{HSW}.
\end{myitemize}
\end{rmk}

\begin{thm}\label{PROP-PROJ-2}For $(\LBr,\Cregsur)$ all statements from Theorem \ref{PROP-PROJ} hold accordingly.

\end{thm}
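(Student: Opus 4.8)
The plan is to transfer each assertion of Theorem \ref{PROP-PROJ} from $(\LB,\Dmax)$ to $(\LBr,\Cregsur)$, exploiting throughout the identity $\Cregsur=\Call\cap\LBr$ from Theorem \ref{C-REG-SUR} together with Proposition \ref{LBr-SUR}, which guarantees that the deflations of $(\LBr,\Cregsur)$ are precisely the surjective $\LBr$-morphisms. The abstract splitting criterion from step \textcircled{1} of the proof of Theorem \ref{PROP-PROJ} applies verbatim to any deflation-exact category, hence to $(\LBr,\Cregsur)$: an object $P$ is projective if and only if every conflation $A\rightarrow B\rightarrow P$ in $\Cregsur$ splits. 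The key structural point is that every space occurring in the relevant constructions remains inside $\LBr$: the candidate projectives $\bigoplus_{n}\ell^{1}(\Gamma_n)$ are regular by Remark \ref{Stand-RES}, and the kernel of the standard presentation is again regular since regularity passes to closed subspaces equipped with the $\flat$-topology.

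For the characterization (i), the implication \textquotedblleft{}$\Longleftarrow$\textquotedblright{} is immediate: given $P\cong\bigoplus_n\ell^{1}(\Gamma_n)$ and a $\Cregsur$-conflation $A\rightarrow B\stackrel{\scriptscriptstyle q}{\rightarrow}P$, the inclusion $\Cregsur\subseteq\Call$ makes it a $\Dmax$-conflation, which splits by Theorem \ref{PROP-PROJ}(i) via some $r\colon P\rightarrow B$ in $\LB$; as $\LBr\subseteq\LB$ is full, $r$ is an $\LBr$-morphism and the conflation already splits in $(\LBr,\Cregsur)$. For \textquotedblleft{}$\Longrightarrow$\textquotedblright{} I would take a projective $P=\ind_n P_n\in\LBr$ and form exactly the presentation $\ker S\rightarrow\bigoplus_n\ell^{1}(B_n)\stackrel{\scriptscriptstyle S}{\rightarrow}P$ from step \textcircled{3} of the proof of Theorem \ref{PROP-PROJ}. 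Here $\bigoplus_n\ell^{1}(B_n)$ is regular and $\ker S=(S^{-1}(0))^{\flat}$ is regular as a closed subspace, so this is a kernel-cokernel pair in $\LB$ with regular terms and surjective $S$, whence a member of $\Cregsur$. Projectivity of $P$ forces it to split, exhibiting $P$ as a complemented subspace of $\bigoplus_n\ell^{1}(B_n)$; since being complemented is a category-independent property, Doma\'nski's decomposition argument (step \textcircled{4}) applies unchanged and yields $P\cong\bigoplus_m\ell^{1}(\Gamma_m)$.

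For (ii) and (iii) I would reuse the resolution constructed in step \textcircled{3}, namely the kernel-cokernel pair $\bigoplus_n\ell^{1}(B_n)\stackrel{\scriptscriptstyle D}{\rightarrow}\bigoplus_n\ell^{1}(B_n)\stackrel{\scriptscriptstyle S}{\rightarrow}P$, whose two middle terms are projective by (i). Its extension by zeros is $\Dmax$-acyclic by Remark \ref{RMK-DC}(iii), and because all objects are regular, Proposition \ref{PROP-AC}(i) shows that it is in fact $\Cregsur$-acyclic. This simultaneously proves that $(\LBr,\Cregsur)$ has enough projectives and that every object admits a projective resolution of length at most one, so $\gldim(\LBr,\Cregsur)\leqslant1$. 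Equality then follows by producing a regular LB-space that is not a countable direct sum of $\ell^{1}(\Gamma)$'s\,---\,for instance a separable infinite-dimensional Hilbert space regarded as a one-step LB-space, which by (i) is not projective.

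I expect the only point requiring genuine care, rather than routine transfer, to be the verification that the standard presentation lands entirely in $\Cregsur$: one must check that $\ker S$ carries the $\flat$-topology (so that it is the $\LBr$-kernel and is regular) and that $S$ is a semistable cokernel in $\LBr$ via Proposition \ref{LBr-SUR}. Once this is in place, the interplay $\Cregsur=\Call\cap\LBr$ reduces every remaining question to the already-proven statements for $(\LB,\Dmax)$, and the external input from Doma\'nski is used as a black box exactly as in the ambient category.
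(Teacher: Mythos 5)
Your proposal is correct and follows essentially the same route as the paper: the paper's proof likewise only writes out the forward direction of (i), using the standard $\ell^1$-presentation (which lies in $\Cregsur$ because all its terms are regular and the quotient map is surjective), the splitting criterion from step \textcircled{1}, and Doma\'nski's decomposition as a black box, treating the remaining assertions as routine transfers via $\Cregsur\subseteq\Call$. Your one genuine addition is the lower bound in (iii), where the paper's witness (a non-regular LB-space) is unavailable in $\LBr$ and your replacement\,---\,a Banach space such as $\ell^2$ that is not isomorphic to any countable direct sum of $\ell^1(\Gamma)$'s\,---\,does the job.
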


\begin{proof} We only need to show \textquotedblleft{}$\Longrightarrow$\textquotedblright{} of (i). For this let $P\in(\LBr,\Cregsur)$ be projective. We may write $P$ as a quotient of a countable direct sum of $\ell^1(\Gamma)$s and obtain a $\Cregsur$-conflation
$$
\ker S\longrightarrow\Bigosum{n\in\NN}{}\ell^1(\Gamma_n)\stackrel{S}{\longrightarrow}P
$$
which then splits. It follows that $P$ is isomorphic to a complemented subspace of $\bigoplus_{n\in\NN}\ell^1(\Gamma_n)$. As in part \textcircled{4} of the proof of Theorem \ref{PROP-PROJ}, it follows from \cite{DomProj92} that $P$ is isomorphic to a direct sum of $\ell^1(\Gamma)$s itself.
\end{proof}

\begin{rmk}In the categories $(\LBc,\CC)$ with $\CC$ being equal to $\Dcom$, $\Emaxcom$ or $\Etopcom$, and in $(\LBr,\EE)$ with $\EE\in\{\Emaxreg,\Etopreg\}$, any object that is isomorphic to a countable direct sum of $\ell^1(\Gamma)$s is projective. It is unknown if these are all projectives and if these categories have enough projectives.\diam{}
\end{rmk}

We end this paper by noting the following consequence, that extends a classic result for abelian categories with enough projectives to the categories of all/regular LB-spaces when endowed with their maximal deflation-exact structures, cf.~\cite[Props 3.30 and 3.31]{HR19}.

\begin{cor}\label{Kproj-cor} Let $*\in\{-,\text{b}\}$. There are natural triangle equivalences:\vspace{3pt}
\begin{compactitem}
\item[(i)] $\K^{\boldsymbol{\ast}}(\Proj(\LB,\Dmax))\stackrel{\hspace{-2pt}\sim}{\longrightarrow}\DD^{\boldsymbol{\ast}}(\LB,\Dmax)$,\vspace{3pt}
\item[(ii)]$\K^{\boldsymbol{\ast}}(\Proj(\LBr,\Cregsur))\stackrel{\hspace{-2pt}\sim}{\longrightarrow} \DD^{\boldsymbol{\ast}}(\LBr,\Cregsur)$.
\end{compactitem}
\end{cor}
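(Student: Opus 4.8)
The plan is to deduce both equivalences from a single abstract statement: for any strongly deflation-exact karoubian category $(\mathcal{A},\CC)$ that has enough projectives and finite global dimension, the natural functor $\K^{\boldsymbol{\ast}}(\Proj(\mathcal{A},\CC))\rightarrow\DD^{\boldsymbol{\ast}}(\mathcal{A},\CC)$ is a triangle equivalence for $*\in\{-,\text{b}\}$. Granting this, the corollary is immediate: both $(\LB,\Dmax)$ and $(\LBr,\Cregsur)$ satisfy Assumption \ref{ASS} (they are strongly deflation-exact by Theorems \ref{LB-confl-str} and \ref{C-REG-SUR}, and karoubian since they are preabelian by Propositions \ref{LB-PROP} and \ref{LBrKERCOK}), and by Theorems \ref{PROP-PROJ} and \ref{PROP-PROJ-2} each has enough projectives and global dimension one, hence finite. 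The abstract statement is essentially \cite[Props 3.30 and 3.31]{HR19}; below I indicate how it can be recovered from the machinery already developed in the excerpt, which is the route I would take.

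First I would apply Definition \ref{Stovi-DFN} with $\mathcal{B}=\Proj(\mathcal{A},\CC)$, which is a full additive subcategory. Since $(\mathcal{A},\CC)$ has enough projectives and global dimension one, every object $A\in\mathcal{A}$ admits a projective resolution $0\rightarrow P^{-1}\rightarrow P^{0}\rightarrow A\rightarrow0$, that is, a $\CC$-acyclic complex of the form \eqref{B-RES} with terms in $\mathcal{B}$ and $n=1$. The abstract situation treated in the proof of Theorem \ref{MAIN-1} therefore applies verbatim and yields a triangle equivalence $\DD^{\boldsymbol{\ast}}(\Proj(\mathcal{A},\CC);\mathcal{A},\CC)\rightarrow\DD^{\boldsymbol{\ast}}(\mathcal{A},\CC)$ for $*\in\{-,\text{b}\}$; here the finite global dimension is exactly what guarantees that the bounded resolutions required in the case $*=\text{b}$ of that proof exist.

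It remains to identify the relative derived category $\DD^{\boldsymbol{\ast}}(\Proj(\mathcal{A},\CC);\mathcal{A},\CC)$ with the plain homotopy category $\K^{\boldsymbol{\ast}}(\Proj(\mathcal{A},\CC))$. By Definition \ref{Stovi-DFN} this reduces to showing that $\Ac(\mathcal{A},\CC)\cap\K^{\boldsymbol{\ast}}(\Proj(\mathcal{A},\CC))$ is the zero subcategory of $\K^{\boldsymbol{\ast}}(\Proj(\mathcal{A},\CC))$, i.e.\ that every right-bounded $\CC$-acyclic complex $P^{\bullet}$ of projectives is contractible. I would argue by descending induction starting at the top nonzero degree: $\CC$-acyclicity splits $P^{\bullet}$ in each degree into conflations $Z^{n}\rightarrow P^{n}\rightarrow Z^{n+1}$ with projective middle term, and by the characterization of projectivity established in step \textcircled{1} of the proof of Theorem \ref{PROP-PROJ} — every conflation whose third term is projective splits — each of these conflations splits. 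Assembling the resulting sections into a contracting homotopy produces a null-homotopy of $P^{\bullet}$. Thus the acyclic complexes of projectives vanish in the homotopy category, giving $\DD^{\boldsymbol{\ast}}(\Proj(\mathcal{A},\CC);\mathcal{A},\CC)=\K^{\boldsymbol{\ast}}(\Proj(\mathcal{A},\CC))$, and combining this with the previous paragraph yields the two claimed equivalences.

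The main obstacle I anticipate is precisely this contractibility step. In the abelian case the splitting-and-induction argument is textbook, but in the one-sided exact setting one must verify that the factorization data furnished by $\CC$-acyclicity (the cycle objects $Z^{n}$ together with the inflation $i^{n-1}$ and deflation $p^{n-1}$ in the defining diagram) genuinely assemble into conflations with projective third term, and that the induction terminates in the bounded case and propagates coherently in the bounded-above case. This is exactly where the strong deflation-exactness and karoubianness of Assumption \ref{ASS} enter, and it is the content of \cite[Prop 3.30]{HR19}; the bounded refinement, which relies on the finite global dimension supplied by Theorems \ref{PROP-PROJ}(iii) and \ref{PROP-PROJ-2}, is \cite[Prop 3.31]{HR19}.
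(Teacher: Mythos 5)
Your proof is correct and follows essentially the same route as the paper: both factor the functor as $\K^{\boldsymbol{\ast}}(\Proj(\mathcal{A},\CC))\rightarrow\DD^{\boldsymbol{\ast}}(\Proj(\mathcal{A},\CC);\mathcal{A},\CC)\rightarrow\DD^{\boldsymbol{\ast}}(\mathcal{A},\CC)$, obtain the second equivalence from the abstract argument in the proof of Theorem \ref{MAIN-1} using the length-one projective resolutions of Theorems \ref{PROP-PROJ} and \ref{PROP-PROJ-2}, and then identify the relative derived category with the homotopy category of projectives. The only difference is that where the paper cites \cite[Prop 3.28]{HR19} for full faithfulness of the first functor, you sketch the underlying contractibility argument for bounded(-above) acyclic complexes of projectives; just note that the splitting criterion from step \textcircled{1} of Theorem \ref{PROP-PROJ} applies because the descending induction shows the cycle objects $Z^{n+1}$ (not the middle terms $P^{n}$) are projective.
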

\begin{proof} Let $(\mathcal{A},\CC)$ be either $(\LB,\Dmax)$ or $(\LBr,\Cregsur)$ and note that for any $A\in\mathcal{A}$ we find a resolution as in \eqref{B-RES} with $\mathcal{B}=\Proj(\mathcal{A},\CC)$ and $n=1$ by Theorems \ref{PROP-PROJ} and \ref{PROP-PROJ-2}. The proof of Theorem \ref{MAIN-1} then shows that the second functor below is an equivalence
$$
\K^{\boldsymbol{\ast}}(\Proj(\mathcal{A},\CC))\rightarrow\DD^{\boldsymbol{\ast}}(\Proj(\mathcal{A},\CC);\mathcal{A},\CC)\rightarrow \DD^{\boldsymbol{\ast}}(\mathcal{A},\CC).
$$
The first functor is by construction essentially surjective and it is fully faithful by \cite[Prop 3.28]{HR19}.
\end{proof}

\vspace{-5pt}

\begin{center}
{\sc Acknowledgments}
\end{center}

{

\small 
The author would like to thank Jos\'{e} Bonet and Bernard Dierolf for several helpful discussions, some of which dating more than 10 years back. Further he would like to thank Adam-Christiaan van Roosmalen for answering many questions about the content of \cite{HR19} and for his help with Proposition \ref{VRT-UNF}. Finally, he would like to thank Jochen Wengenroth for his help with Lemma \ref{LEM-W}.

}


\end{document}